
\documentclass[12 pt]{amsart}%
\usepackage{hyperref, graphicx, mathrsfs}
\usepackage{amssymb,amsmath,amsthm}
\usepackage{mathrsfs,dsfont}
\usepackage{amsmath}
\usepackage{amsfonts}
\usepackage{amssymb}
\usepackage{graphicx}%
\setcounter{MaxMatrixCols}{30}
\providecommand{\U}[1]{\protect\rule{.1in}{.1in}}

\evensidemargin 0.0in \oddsidemargin 0.0in \textwidth 6.5in
\topmargin  -0.2in \textheight  9.0in \overfullrule = 0pt
\theoremstyle{plain}
\newtheorem{theorem}{Theorem}[section]

\newtheorem{proposition}{Proposition}[section]
\newtheorem{lemma}[proposition]{Lemma}

\newtheorem{corollary}{Corollary}

\newtheorem*{remark}{Remark}
\numberwithin{equation}{section}
\setcounter{footnote}{1}

\DeclareMathSymbol{\Gamma}{\mathalpha}{letters}{"00}
\begin{document}
\title[Unstable Manifolds]{Unstable manifolds of Euler equations}
\author[Lin]{Zhiwu Lin$^{\dagger}$}
\address{$^{\dagger}$ School of Mathematics\\
Georgia Institute of Technology\\
Atlanta, GA 30332}
\email{zlin@math.gatech.edu}
\author[Zeng]{Chongchun Zeng$^{*}$}
\address{$^{*}$School of Mathematics\\
Georgia Institute of Technology\\
Atlanta, GA 30332}
\email{zengch@math.gatech.edu}
\thanks{$^{\dagger}$ The first author is funded in part by NSF DMS 0908175.}
\thanks{$^{*}$ The second author is funded in part by NSF DMS 0801319.}
\date{}

\begin{abstract}
We consider a steady state $v_{0}$ of the Euler equation in a fixed bounded
domain in $\mathbf{R}^{n}$. Suppose the linearized Euler equation has an
exponential dichotomy of unstable and center-stable subspaces. By rewriting
the Euler equation as an ODE on an infinite dimensional manifold of volume
preserving maps in $W^{k, q}$, $(k>1+\frac{n}{q})$, the unstable (and stable)
manifolds of $v_{0}$ are constructed under certain spectral gap condition
which is verified for both $2$D and $3$D examples. In particular, when the
unstable subspace is finite dimensional, this implies the nonlinear
instability of $v_{0}$ in the sense that arbitrarily small $W^{k, q}$
perturbations can lead to $L^{2}$ growth of the nonlinear solutions.

\end{abstract}
\maketitle

\section{Introduction}

\label{S:intro}

We consider the incompressible Euler equation on a smooth bounded domain
$\Omega\subset\subset\mathbb{R}^{n}$, $n \ge2$, under the slip (or periodic in
certain directions) boundary condition
\[%
\begin{cases}
v_{t} + (v\cdot\nabla) v = -\nabla p \quad\text{ and } \quad\nabla\cdot v =0
\qquad & x\in\Omega\\
v \cdot N =0, \qquad & x\in\partial\Omega
\end{cases}
\tag{E}
\]
where $v= (v^{1}, \ldots, v^{n})^{T}$ is the velocity field and $N$ is the
unit outward normal vector of $\Omega$. We take
\begin{equation}
\label{E:phaseS}W_{Euler}^{k, q} \triangleq\{v \in W^{k, q} (\Omega,
\mathbb{R}^{n}) \mid\nabla\cdot v=0 \text{ in } \Omega, \, v\cdot N = 0 \text{
on } \partial\Omega\}, \qquad q>1, \quad k> 1+\frac nq
\end{equation}
as the phase space. It is well known that (E) is well posed in these spaces,
globally if $n=2$ and locally if $n\ge3$. As shown below, the pressure $p$ can
be written in terms of $v$ through a quadratic mapping.

Let $v_{0}$ be a steady solution of (E). Linearize the equation at (E) and we
obtain
\begin{equation}
v_{t}=-(v_{0}\cdot\nabla)v-(v\cdot\nabla)v_{0}-\nabla p\triangleq
Lv\label{E:LEuler}%
\end{equation}
where the operator $L$ can be defined as acting only on $v$ since the
linearized pressure $p$ can be determined by $v$ linearly, though non-locally.
To study the dynamics near $v_{0}$, the first step is to understand linear
instability, that is, the spectrum of the operator $L$. The problem of linear
instability of inviscid flows has a long history dated back to Rayleigh and
Kelvin in 19th century. But even until now, very few sufficient conditions for
the existence of unstable eigenvalues are known and most of the investigations
had been restricted to shear flows and rotating flows. See \cite{dr81}%
,\cite{dh66} and the references therein. Some recent results on instability
conditions can be found in \cite{Lin03} \cite{lin-comt} for shear flows and
rotating flows, and in \cite{lin-cmp} for general 2D flows. Besides the
discrete unstable spectrum, the linearized Euler operator may also have
non-empty unstable essential spectrum due to nontrivial Lyapunov exponents of
the steady flow $v_{0}$ (\cite{fv-91} \cite{lif-mai}
\cite{latushkin-shvydkoy-jfa} \cite{la-sv-04} \cite{vishik96}). Indeed, growth
of linearized solutions can be seen in $H^{s}~\left(  s>1\right)  $ norm near
any nontrivial steady flows, due to the stretching of the steady fluid
trajectory. One also notes that the choice of the Sobolev space (norm)
actually affects the essential spectrum which corresponds to small spatial
scales, but not discrete spectrum corresponding to large scales.

Consider a linearly unstable steady flow $v_{0}$, that is, the linearized
Euler operator $L$ has an unstable discrete eigenvalue. To discuss the
nonlinear instability, it is important to specify the norms. On the one hand,
certain regularity is necessary in the local well-posedness of classical
solutions. On the other hand, as mentioned in the above, the choice of the
norm already affects the essential spectrum at the linear level. Moreover,
even near steady states without unstable eigenvalues, solutions are expected
to grow in $H^{s}$ norm with $s>1$. Therefore the growth in the energy norm
$L^{2}$ of nonlinear solutions is more a nonlinear reflection of the linear
instability from the discrete spectrum, which also corresponds to the
instability in the large scale spatial scale (see \cite[Section 6.2]{Lin04}
for detailed discussions). Naturally, the ideal nonlinear instability result
would be to obtain order $O(1)$ growth in $L^{2}$ distance (weaker energy
norm) from the steady state $v_{0}$ of solutions starting with arbitrary small
initial perturbation from $v_{0}$ in $H^{s}\ $norm (stronger norm), where
$s>0\ $is determined by the regularity of unstable eigenfunctions. Such
nonlinear instability result (i.e. $H^{s}\rightarrow L^{2}$)\ is not only
mathematically stronger, but also physically interesting due to the
discussions in the above.

The proof of the nonlinear instability based on unstable eigenvalues is
nontrivial for several reasons. The main difficulty is that the nonlinear term
$v\cdot\nabla v$ contains a loss of derivative. Moreover, the norm-dependent
unstable essential spectrum corresponds to growth in small spatial scales. It
may interact with discrete unstable modes and then cause complications in
proving nonlinear instability. In the last decade, there appeared several
proofs of nonlinear instability for Euler equations (\cite{bgs} \cite{gre00}
\cite{vf03} \cite{fsv} \cite{Lin04}). In \cite{fsv}, nonlinear instability in
$H^{s}\ \left(  s>1+\frac{n}{2}\right)  $ norm was proven for $n-$dimensional
Euler equations, under a spectral gap condition which was verified for $2$D
shear flows. All other papers proved nonlinear instability only for the $2$D
case, in the more desirable $H^{1}$ or $L^{2}$ norms and under different
spectral assumptions. Particularly, in \cite{Lin04} nonlinear instability in
$L^{2}$ norm was proved for general linearly unstable flows of $2$D Euler,
without additional assumption on the growth rate which was made in (\cite{bgs}
\cite{gre00} \cite{vf03}).

When there exist a collection $\sigma_{u}$ of unstable eigenvalues of the
linearized Euler operator at the steady flow $v_{0}\ $with strictly larger
real parts than the rest of the spectrum, it is very natural from a dynamical
system point of view to ask if a locally invariant unstable manifold tangent
to the eigenspace of $\sigma_{u}$ exists. The answer to this question would
provide a better picture of the nonlinear instability, including the
dimensions and the directions of the unstable solutions with certain minimal
growth rate. More importantly, such locally invariant manifolds provide more
precise characterization of the local dynamical pictures near an unstable
equilibria, and are also basic tools for constructing globally invariant
structures such as heteroclinic and homoclinic orbits. These dynamical
structures\textit{\ }are important in understanding the turbulent fluid
behaviors. The major obstacle to the construction of local invariant manifolds
is again the loss of derivative due to the derivative nonlinearity
$v\cdot\nabla v$. For dissipative models such as reaction-diffusion equations
(\cite{henry}) and Navier-Stokes equations (\cite{yudovich-book}
\cite{Li-NS-limit}), it is rather standard to construct invariant manifolds
since the dissipation terms provide strong smoothing effect to overcome the
loss of derivatives in the nonlinear terms. However, for non-dissipative
continuum models including Euler equations, the linearized operators have no
smoothing effect to help overcome the loss of derivative in the nonlinear
terms. So it has been largely open to construct invariant manifolds for
conservative continuum models such as Euler equations. In the proof of
nonlinear instability for 2D Euler, one takes initial data perturbed along the
direction of unstable eigenfunctions and then uses special properties of
nonlinear solutions of 2D Euler to overcome the loss of derivatives, such as
the bootstrap arguments in (\cite{bgs} \cite{Lin04} \cite{vf03}) and the
nonlinear energy estimates in (\cite{gre00}). However, these techniques can
not be used for constructing invariant manifolds, since we do not know
beforehand the initial conditions for solutions on unstable or stable manifolds.

In this paper, we obtain the first result on stable and unstable manifolds of
Euler equations in any dimensions. To state the precise results, we formulate
the following assumptions:

\begin{enumerate}
\item[(A1)] $v_{0}\in W_{Euler}^{k+r,q}$, where
$r\geq4$.

\item[(A2)] $\exists\; \lambda_{u} > \lambda_{cs} >0$, and closed subspaces
$X_{u}$ and $X_{cs}$ of $W_{Euler}^{k, q}$ such that they satisfy $L
\big(X_{u, cs} \cap domain(L)\big) \subset X_{u, cs}$ respectively and
$W_{Euler}^{k, q} = X_{u} \oplus X_{cs}$. Moreover, let $L_{u} = L|_{X_{u}}$
and $L_{cs} = L|_{X_{cs}}$, then for some $M>0$, they satisfy
\[
|e^{tL_{cs}}|\le M e^{\lambda_{cs}t}, \quad\forall\; t\ge0 \quad\text{ and }
\quad|e^{tL_{u}}|\le M e^{\lambda_{u} t}, \quad\forall\; t\le0 .
\]

\item[(A3)] The largest Lyapunov exponent $\mu_{0}$ (in both forward and back
time) of the linearized equations
\[
y_{t}=Dv_{0}\big(x(t)\big)y\qquad y_{t}=-\Big(Dv_{0}\big(x(t)\big)\Big)^{\ast
}y,
\]
along any integral curve $x(t)$ of $x_{t}=v_{0}(x)$, satisfies
\[
\lambda_{u}-\lambda_{cs}>K_{0}\mu_{0}%
\]
where $K_{0}$ is a constant depending only on $r$ and $k$.
\end{enumerate}


Now we give our main theorem

\begin{theorem}
\label{T:UnstableM} Under above assumptions (A1)-(A3), there exists a unique
$C^{r-3,1}$ local unstable manifold $W^{u}$ of $v_{0}$ in $W_{Euler}^{k,q}$
which satisfies

\begin{enumerate}
\item It is tangent to $X_{u}$ at $v_{0}$.

\item It can be written as the graph of a $C^{r-3,1}\ $mapping from a
neighborhood of $v_{0}$ in $X_{u}$ to $X_{cs}$.

\item It is locally invariant under the flow of the Euler equation (E), i.e.
solutions starting on $W^{u}$ can only leave $W^{u}$ through its boundary.

\item Solutions starting on $W^{u}$ converges to $v_{0}$ at the rate
$e^{\lambda t}$ as $t\rightarrow-\infty$ for any $\lambda< \lambda_{u} -K_{0}
\mu_{0}$.
\end{enumerate}

The same results hold for local stable manifold of $v_{0}$ as the Euler
equation (E) is time-reversible.
\end{theorem}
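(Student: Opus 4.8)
The strategy is to leave the Eulerian form of (E), in which the nonlinearity $(v\cdot\nabla)v$ loses a derivative, and instead pass to Lagrangian coordinates, in which (E) becomes a genuine ODE --- with \emph{no} loss of derivative --- on an infinite dimensional manifold of volume preserving maps, and then to run a Lyapunov--Perron / graph-transform invariant-manifold construction there. Let $\mathcal{G}^{k,q}$ be the manifold of volume preserving $W^{k,q}$ diffeomorphisms of $\bar\Omega$ respecting the boundary condition; since $k>1+\frac{n}{q}$ this is a topological group and, in the weak sense of Ebin--Marsden, a smooth Banach manifold. Because the flow map $\eta_0(t)$ of the steady state satisfies $|D\eta_0(t)|\sim e^{\mu_0|t|}$ and so wanders off, I would track not $\eta(t)$ itself but the \emph{relative} flow map $\zeta(t):=\eta(t)\circ\eta_0(t)^{-1}$, where $\eta_0$ is the one-parameter subgroup generated by $v_0$ (a $W^{k+r,q}$-class diffeomorphism by (A1)). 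Writing $\mathcal{D}:=\partial_t+v_0\cdot\nabla$ for the derivative along the steady flow, a direct computation converts (E) into the second-order equation
\[
\mathcal{D}^2\zeta=-(\nabla p)\circ\zeta ,
\]
with constraint $\det D\zeta\equiv1$ (so $\zeta\in\mathcal{G}^{k,q}$) and with $p$ determined by differentiating the constraint twice; the steady state is now the genuine equilibrium $\zeta=\mathrm{id}$, $\mathcal{D}\zeta=v_0$, and the Eulerian velocity is recovered from the state by $v=w\circ\zeta^{-1}$ with $w:=\mathcal{D}\zeta$.

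Setting $w=\mathcal{D}\zeta$ gives a first-order system for $(\zeta,w)$ on a neighbourhood of $(\mathrm{id},v_0)$ in $\mathcal{G}^{k,q}\times W_{Euler}^{k,q}$ (note that although $\partial_t\zeta$ and $(v_0\cdot\nabla)\zeta$ each lie only in $W^{k-1,q}$, their sum $w$ does lie in $W^{k,q}$, since $w\circ\eta_0=\partial_t\eta=v\circ\eta\in W^{k,q}$). Its linear part is the $C_0$-group generated by an operator built from transport along $v_0$ --- hence with norm $\le Me^{c\mu_0|t|}$ on $W^{k,q}$ --- together with the Leray-projected linearization. The crucial point, which is the Euler analogue of the smoothness of the geodesic spray, is that after the two-derivative gain from the elliptic (Neumann) problem for $p$ and using that every composition occurring is of the benign type, the nonlinearity loses no derivative; however, differentiating the vector field in $(\zeta,w)$ repeatedly brings down spatial derivatives of the frame $\eta_0(t)$, which cost both regularity of $v_0$ (capping the order of differentiability near $r$) and growth like $e^{m\mu_0|t|}$ at the $m$-th derivative. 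This is precisely why the vector field ends up $C^{r-3,1}$ --- the remaining loss of three-and-a-Lipschitz coming from the first-order reduction, the $\det D\zeta\equiv1$ constraint, and the invariant-manifold estimates --- and why the exponential dichotomy furnished by (A2) for the $w$-component survives only with its exponents fuzzed by $O(\mu_0)$ (the extra, essentially neutral, particle-relabeling modes introduced by the co-moving frame also sit in a band of width $O(\mu_0)$). The spectral gap condition $\lambda_u-\lambda_{cs}>K_0\mu_0$ in (A3), $K_0=K_0(r,k)$, is exactly what keeps a genuine dichotomy wide enough to support a $C^{r-3,1}$ unstable manifold.

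Granting this --- a $C^{r-3,1}$ vector field on a Banach manifold, an equilibrium, and a linear exponential dichotomy whose gap beats $K_0\mu_0$ --- I would truncate the nonlinearity outside a small ball and carry out the standard construction: the local unstable manifold is the set of initial data admitting a backward orbit decaying like $e^{\lambda t}$ for $\lambda<\lambda_u-K_0\mu_0$, obtained as the fixed point of a contraction on an exponentially weighted space of curves, the gap condition supplying both the contraction and, by the usual fiber-contraction argument, the propagation of $C^{r-3,1}$ regularity and uniqueness. In the $(\zeta,w)$-coordinates this yields $W^u$ as a $C^{r-3,1}$ graph over a neighbourhood of the equilibrium in the unstable subspace, tangent to it, locally invariant; here one must also check that $\zeta$ stays near $\mathrm{id}$ along these orbits so that $v=w\circ\zeta^{-1}$ is well defined, which again pits the decay rate against the transport growth and so uses (A3). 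Mapping back by $v=w\circ\zeta^{-1}$ (which near $v_0$ equals $v_0+(w-v_0)-(Dv_0)(\zeta-\mathrm{id})+\cdots$, the identity to leading order on the $(w-v_0)$-slot) transfers everything to $W_{Euler}^{k,q}$; the graph-over-$X_u$ and tangency statements hold because the coordinate change conjugates, at the linear level, the $(\zeta,w)$-flow to the linearized Euler flow $e^{tL}$ and so matches the unstable subspaces, and the convergence rate in property (4) is exactly $\lambda_u$ minus the $K_0\mu_0$ loss inherent in passing between the Lagrangian and Eulerian pictures. Finally, since $(v(t,x),p(t,x))\mapsto(-v(-t,x),p(-t,x))$ is a symmetry of (E), the stable manifold is the unstable manifold of the time-reversed equation.

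\textbf{Main obstacle.} The heart of the argument --- and essentially the whole content of the paper --- is the first two steps: establishing that (E), in the relative-flow-map formulation, is a genuinely $C^{r-3,1}$ ODE on $\mathcal{G}^{k,q}\times W_{Euler}^{k,q}$ with no loss of derivative, with \emph{uniform, quantitative} bounds rather than an abstract smoothness statement, and showing that the Eulerian dichotomy (A2) passes to these coordinates with exponents controlled by $\mu_0$ as in (A3). Once the equation is in this form on the Banach manifold, the invariant-manifold construction is comparatively routine, modulo the usual care needed to work on a manifold, locally in the phase space, and with the relative flow map $\zeta$ controlled.
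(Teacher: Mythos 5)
Your high-level strategy---pass to Lagrangian coordinates, exploit the smoothness of the geodesic-type vector field, localize near the orbit $\eta_0(t)$ generated by $v_0$, and run a Lyapunov--Perron construction using the gap condition (A3) to dominate the $e^{\mu_0|t|}$ transport growth---is exactly the paper's plan, and your observations about the particle-relabeling degrees of freedom, the quadratic form of the pressure, the time-reversal symmetry, and the source of the $K_0\mu_0$ loss are all on target. But the central technical choice you make, tracking the right-translated relative flow map $\zeta(t)=\eta(t)\circ\eta_0(t)^{-1}$, is precisely the choice the paper considers and explicitly rejects, and it does \emph{not} produce an infinite-dimensional ODE. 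With $\zeta=\eta\circ\eta_0^{-1}$ one finds $\zeta_t = v\circ\zeta - (D\zeta)v_0$, so $\partial_t\zeta\in W^{k-1,q}$ only; your re-pairing $w:=\mathcal{D}\zeta=\zeta_t+v_0\cdot\nabla\zeta=v\circ\zeta\in W^{k,q}$ keeps the \emph{state} in $W^{k,q}$, but the first-order system for $(\zeta,w)$ reads $\partial_t\zeta=-v_0\cdot\nabla\zeta+w$, $\partial_t w=-v_0\cdot\nabla w-\mathcal{P}(\zeta,w,w)$, which is a semilinear transport PDE with the unbounded generator $-v_0\cdot\nabla$, not a vector field on $W^{k,q}\times W^{k,q}$. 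So your claim that this formulation is ``a genuinely $C^{r-3,1}$ ODE $\ldots$ with no loss of derivative'' is false: the loss of derivative has merely been moved from the nonlinearity into the linear part. It is also why your explanation of the $C^{r-3,1}$ cap doesn't close: in your (autonomous, $C^\infty$-nonlinearity) formulation there is nothing to bring down spatial derivatives of $\eta_0(t)$, so the finite differentiability would have to be accounted for elsewhere.

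The paper achieves a true bounded ODE by localizing with the \emph{opposite} translation: it writes $u(t)=u_0(t)\circ\Psi(w(t))$, i.e.\ coordinates on $u_0(t)^{-1}\circ u$, not $u\circ u_0(t)^{-1}$. In this parametrization, the chain rule puts the large matrices $Du_0(t)$, $D^2u_0(t)$ (which are in $W^{k+r-1,q}$) in the position of coefficients multiplying the new unknowns $w,w_t$, rather than putting $D\zeta$ (only $W^{k-1,q}$) against $v_0$; as a result the time-derivative $w_t$ lies in $W^{k,q}$ and the reduced system $w_{tt}+\mathcal{F}(t,w,w_t)=0$ has a \emph{bounded} nonlinearity $\mathcal{F}$. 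The price, which is where the $C^{r-3,1}$ actually comes from, is that this left translation by $u_0(t)\in W^{k+r,q}$ is only $C^r$ (Proposition~2.2), so $\mathcal{F}$ is only $C^{r-4}$ with norms growing like $e^{K\mu|t|}$ (Lemma~2.5), and the resulting exponential dichotomy is time-dependent with projections that degenerate as $t\to-\infty$ (Lemma~2.6). It is against these two sources of growth that the gap in (A3) is spent, together with the quadratic vanishing $\mathcal{F}(t,w,0)\equiv0$ in the Lyapunov--Perron estimates. If you want to pursue your $\zeta=\eta\circ\eta_0^{-1}$ formulation, you would have to treat it honestly as a semilinear PDE with $C_0$-group generated by $-v_0\cdot\nabla$, establish a dichotomy for that group relative to the conjugated linearized operator (which is not the same as hypothesis (A2) on $L$), and re-derive the $e^{k\mu|t|}$ growth bounds; that is a genuinely different and substantially harder route than the one the paper takes.
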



\begin{remark}
In Sections 3 and 4, the assumptions (A2)-(A3) are verified for linearly
unstable $2$D shear flows and rotating flows, as well as $3$D shear flows. For
these flows, $\mu_{0}=0$ and thus (A3) is automatically satisfied if an
unstable eigenvalue exists.
\end{remark}

\begin{remark}
Suppose $v_{0} \in W^{k_{1}+r, q_{1}} \cap W^{k_{2}+r, q_{2}}$ and the
invariant decompositions $W^{k_{i}, q_{i}} = X_{u}^{i} \oplus X_{cs}^{i}$
along with the same exponents $\lambda_{u, cs}$ satisfy (A2) -- (A3) for
$i=1,2$. One may construct the local unstable manifolds $W^{u}_{i} \subset
W^{k_{i}, q_{i}}$, $i=1,2$ from Theorem \ref{T:UnstableM}. Assume $X_{u}^{1}=
X_{u}^{2}$, we claim $W_{1}^{u}=W_{2}^{u}$ on an open neighborhood of $v_{0}$.
In fact, let $k=\max\{k_{1}, k_{2}\}$ and $q=\max\{q_{1}, q_{2}\}$ and $W^{u}$
be the unstable manifold of $v_{0}$ in $W^{k, q} \subset W^{k_{i}, q_{i}}$,
$i=1,2$. Clearly $W^{u} \subset W_{i}^{u}$, $i=1,2$, with the same tangent
spaces and thus the above claim follows.
\end{remark}

When $X_{u}$ is finite dimensional, which in particular is always true in $2$D
under assumption the $\lambda_{u}>\lambda_{cs}>\left(  k-1\right)  \mu_{0}$ as
proved in Section \ref{S:2d}, then the $W^{k,q}$ topology and $L^{2}$ topology
are equivalent on $W^{u}$. Then an immediate consequence of the above theorem
is the nonlinear instability in $L^{2}$ norm with initial data slightly
perturbed from $v_{0}$ in $W^{k,q}$ norm.

\begin{corollary}
\label{C:instability} Suppose (A1) --(A3) are satisfied and $X_{u}$ is finite
dimensional, then there exists $\delta>0$ such that there exist a solution
$v(t)$ such that $|v(0) - v_{0}|_{L^{2}} \ge\delta$ and $|v(t) - v_{0}|_{W^{k,
q}} \to0$ as $t \to-\infty$ exponentially.
\end{corollary}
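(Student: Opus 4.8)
The plan is to deduce the corollary directly from Theorem \ref{T:UnstableM}, the only extra input being that, since $X_{u}$ is finite dimensional, the $W_{Euler}^{k,q}$ and $L^{2}$ distances to $v_{0}$ are comparable on the manifold $W^{u}$ near $v_{0}$. By Theorem \ref{T:UnstableM}, $W^{u}$ is a $C^{r-3,1}$ manifold through $v_{0}$, realized as the graph $\{v_{0}+u+\Phi(u):u\in U\}$ of a $C^{r-3,1}$ map $\Phi$ from a neighborhood $U$ of $0$ in $X_{u}$ into $X_{cs}$, with $\Phi(0)=0$ and, by the tangency in part (1), $D\Phi(0)=0$. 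Since $\lambda_{u}>\lambda_{cs}>0$, the space $X_{u}$ is nontrivial, so $\dim W^{u}\ge1$. Fix a small $\rho>0$ (to be shrunk below) and a point $v_{1}=v_{0}+u_{1}+\Phi(u_{1})$ in the interior of $W^{u}$ with $|v_{1}-v_{0}|_{W_{Euler}^{k,q}}=\rho$; since the sum $X_{u}\oplus X_{cs}$ is direct, $v_{1}\neq v_{0}$ forces $u_{1}\neq 0$, which holds here.

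Next I establish the comparability of the two distances on $W^{u}$. Because $\Omega$ is bounded and $k>\tfrac nq$, the inclusion $W_{Euler}^{k,q}\hookrightarrow L^{\infty}\hookrightarrow L^{2}$ is bounded, say with constant $C_{0}$; and because $X_{u}$ is finite dimensional, all norms on it are equivalent, so $|u|_{W_{Euler}^{k,q}}\le c_{2}|u|_{L^{2}}$ for all $u\in X_{u}$ and some $c_{2}>0$. The tangency $D\Phi(0)=0$ gives $|\Phi(u)|_{W_{Euler}^{k,q}}=o(|u|_{W_{Euler}^{k,q}})$ as $u\to 0$. Hence, for $v=v_{0}+u+\Phi(u)\in W^{u}$ with $|u|$ small enough,
\[
|v-v_{0}|_{L^{2}}\ \ge\ |u|_{L^{2}}-C_{0}|\Phi(u)|_{W_{Euler}^{k,q}}\ \ge\ \frac{1}{2}|u|_{L^{2}}\ \ge\ \frac{1}{2c_{2}}|u|_{W_{Euler}^{k,q}}\ \ge\ \frac{1}{4c_{2}}|v-v_{0}|_{W_{Euler}^{k,q}},
\]
where the last step uses $|v-v_{0}|_{W_{Euler}^{k,q}}\le|u|_{W_{Euler}^{k,q}}+|\Phi(u)|_{W_{Euler}^{k,q}}\le 2|u|_{W_{Euler}^{k,q}}$. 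Shrinking $\rho$ so that $v_{1}$ lies in this regime and setting $\delta:=\rho/(4c_{2})>0$, we obtain $|v_{1}-v_{0}|_{L^{2}}\ge\delta$.

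Finally, let $v(t)$ solve (E) with $v(0)=v_{1}$. By parts (3) and (4) of Theorem \ref{T:UnstableM} the negative semiorbit of $v_{1}$ remains on $W^{u}$ (it converges to the interior point $v_{0}$, so it never reaches the boundary of $W^{u}$), is defined for all $t\le 0$, and satisfies $|v(t)-v_{0}|_{W_{Euler}^{k,q}}\le Ce^{\lambda t}\to 0$ as $t\to-\infty$ for every $\lambda<\lambda_{u}-K_{0}\mu_{0}$, which is positive by (A3). Combined with $|v(0)-v_{0}|_{L^{2}}=|v_{1}-v_{0}|_{L^{2}}\ge\delta$, this is exactly the asserted conclusion.

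The only step requiring any care is the comparability estimate, and this is where the finite dimensionality of $X_{u}$ enters (equivalence of all norms on $X_{u}$), together with the tangency $D\Phi(0)=0$ supplied by Theorem \ref{T:UnstableM}, which makes the $X_{cs}$-component $\Phi(u)$ negligible against the $X_{u}$-component $u$; everything else is a direct transcription of the theorem. If one is content with $\delta$ merely positive rather than proportional to the fixed radius $\rho$, this step can be omitted altogether: $v_{1}\neq v_{0}$ and the bounded embedding $W_{Euler}^{k,q}\hookrightarrow L^{2}$ already give $|v_{1}-v_{0}|_{L^{2}}>0$.
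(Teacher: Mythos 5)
Your proof is correct and fleshes out the approach the paper only sketches in the sentence preceding the corollary: $X_u$ finite dimensional plus the tangency $DH(0)=0$ from the theorem makes the $L^2$ and $W^{k,q}$ distances comparable on $W^u$ near $v_0$, and then parts (3)--(4) of Theorem \ref{T:UnstableM} give invariance and exponential backward decay for a solution seeded at a small nonzero point of $W^u$. One small slip: the inference that ``$\lambda_u>\lambda_{cs}>0$ forces $X_u\neq\{0\}$'' does not follow from (A2), since the dichotomy bounds are vacuously satisfied when $X_u=\{0\}$ (in which case $W^u=\{v_0\}$ and the corollary would fail); nontriviality of $X_u$ should instead be taken as a tacit standing hypothesis, as it is in all the paper's examples where $X_u$ is a nontrivial eigenspace. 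Everything else is sound.
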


This is a stronger statement than the usual exponential nonlinear instability
which by definition means that there exists $\delta>0$ such that for any
$\epsilon>0$, there exists a solution $v(t)$ satisfying both $|v(0)-v_{0}
|<\epsilon$ and $\sup_{0\leq t\leq O(-\log\epsilon)}|v(t)-v_{0}|\geq\delta$.
One notes that the perturbed solution $v(t)$ is allowed to depend on
$\epsilon$ and no condition is imposed on the asymptotic behavior of $v(t)$ as
$t\rightarrow-\infty$. In the contrast, any solution $v(t)$ in Corollary
\ref{C:instability} satisfies, in addition to the requirements in the
nonlinear instability definition for all $\epsilon>0$, that it starts at
$v_{0}$ when $t=-\infty$ and get out of the $\delta$-neighborhood of $v_{0}$
in $L^{2}$ norm.

In the previous works (see references above) on nonlinear instability of the
Euler equation, growing solutions have usually been found in the most unstable
direction of the linearized equation \eqref{E:LEuler} with roughly the maximal
exponential growth rate. The above unstable manifold theorem actually provides
solutions growing in other relatively weaker unstable directions. Though not
necessary, it is easier to see this when $X_{u}$ is finite dimensional. In
fact, on the finite dimensional locally invariant manifold $W^{u}$, the Euler
equation (E) becomes a smooth ODE and $v_{0}$ is a hyperbolic unstable node.
When $L|_{X_{u}}$ has eigenvalues with different real parts, one may split
$X_{u}$ into strongly unstable subspace $X_{uu}$ and weaker unstable subspace
$X_{wu}$ such that $X_{u}=X_{uu}\oplus X_{wu}$. The standard invariant
manifold theory implies there exist the locally invariant weakly unstable
manifold $W^{wu}$ tangent to $X_{wu}$ and the locally invariant strongly
unstable fibers $W_{v}^{su}$ with base point $v\in W^{wu}$ and extend in the
direction of $X_{uu}$. Those solutions in $W^{wu}$ grow in the directions of
$X_{wu}$ at a slower exponential rate. Moreover, the Hartman-Grobman theorem
implies that a H\"{o}lder homeomorphism on $X^{u}$ may transform the Euler
equation restricted on $W^{u}$ into a linear ODE system.

In Section 4, we construct linearly unstable $3$D steady flow satisfying the
assumptions in Theorem \ref{T:UnstableM}. By Corollary \ref{C:instability},
this implies nonlinear exponential instability in $L^{2}$ norm. To our
knowledge, this is the first proof of nonlinear instability of $3$D Euler
equation. We note that the methods for proving nonlinear instability of $2$D
Euler cannot be applied to prove nonlinear instability for $3$D Euler. For
example, the bootstrap arguments in (\cite{bgs} \cite{Lin04} \cite{vf03})
strongly use the fact that vorticity is non-streching in $2$D and therefore do
not work in $3$D due to the vorticity stretching effect.

Below, we sketch the main ideas in the proof of Theorem \ref{T:UnstableM}. The
main difficulty in constructing the unstable manifolds for the Euler equation
lies in the fact that a derivative loss occurs in the nonlinear terms while
the linearized flow does not have the smoothing property. We will prove
Theorem \ref{T:UnstableM} mainly by considering the Euler equation (E) in the
Lagrangian coordinates. In a seminal paper \cite{ar66}, V. Arnold pointed out
that the incompressible Euler equation can be viewed as the geodesic equation
on the group of volume preserving diffeomorphisms. This point of view has been
adopted and developed by several authors in their work on the Euler equations,
such as \cite{em70, sh85, br99, sz08a, sz08b} to mention a few.

On the one hand, the main advantage of this approach is that (E) on a fixed
domain as in this paper becomes a smooth infinite dimensional ODE \cite{em70}
on the tangent bundle of the Lie group $\mathcal{G}$ of volume preserving
diffeomorphisms of $\Omega$ and thus the difficulty arising from the loss of
regularity disappears. A side remark is that this is in contrast with the
Euler equation with free boundaries \cite{sz08a, sz08b} where the Riemannian
curvature of the infinite dimensional manifolds of volume preserving manifolds
are unbounded operators and the Euler equation can not be considered as
infinite dimensional ODEs. As a clarification, by saying that the Euler
equation on fixed domains defines an ODE, we mean that it corresponds to a
vector field which is everywhere defined and smooth on an infinite dimensional
manifold. In this sense, evolutionary PDEs involving unbounded operators such
as heat equation or wave equation do not define infinite dimensional ODEs.

On the other hand, in the Lagrangian coordinates, the steady state $v_{0}$
generates a special geodesic $u_{0}(t)$ which coincides with the integral
curve starting at the identity map of the right invariant vector field on
$\mathcal{G}$ generated by $v_{0}$. By carefully using local coordinates along
this orbit generated by the group symmetry, we further transform the localized
Euler equation into a weakly nonlinear non-autonomous ODE with linear
exponential dichotomy. One notes that the multiplication on this Lie group
$\mathcal{G}$ -- the composition between $W^{k, q}$ volume preserving maps --
is continuous, but not smooth, see Proposition \ref{P:translation}. The
assumption $v_{0} \in W^{k+r, q}$ ensures that the localization we choose
possesses certain smoothness. Moreover, compared to the usual exponential
dichotomy enjoyed by many ODEs and even PDEs (see for example \cite{CL88}),
the exponential dichotomy here has the defect that the angle between the
associated invariant subspaces may not have a uniform positive lower bound in
time due to the possible growth of the linearized ODE flow of the vector field
$v_{0}$. Moreover, this type of unwanted growth in $t$ may also appear in the
norms of the nonlinearities. Assumption (A3) is used to overcome this
non-uniformity in $t$ of the dichotomy in the construction of the unstable
manifolds via the method based on the Lyapunov-Perron integral equations. Here
we have to take the advantage of the fact that solutions on the unstable
manifolds decay to $v_{0}$ exponentially as $t\rightarrow-\infty$. Since
solutions on the center manifolds do not have similar decay properties, we
still can not construct center manifolds of stead states.

\begin{remark}
The assumption (A2) is the standard linear exponential dichotomy for
constructing invariant manifolds. The extra gap assumption (A3) might be
technical, but appears rather natural in our approach. In the proof, we change
back and forth between Lagrangian and Eulerian coordinates, and each
transformation induces a factor $e^{\mu_{0}t}$ in the estimates. The extra gap
$\lambda_{u}-\lambda_{cs}>K_{0}\mu_{0}$ guarantees that after all these
transformations, the exponential dichotomy of unstable and central-stable
parts still persists.
\end{remark}

The method introduced in this paper provides a general approach to construct
unstable manifolds for many other continuum models in fluid and plasmas. In
these models, the loss of derivative is also due to nonlinear terms from the
material derivative. By working on Lagrangian coordinates, we can again
overcome such a loss of derivative and the existence of unstable manifolds is
conceivable with sufficient spectral gap. We are using this approach to
construct unstable manifolds for density-dependent Euler equations and the
Vlasov-Poisson system for collisionless plasmas.

\section{Proof of Theorem \ref{T:UnstableM}}

\label{S:MProof}

Lagrangian coordinates is a standard tool in studying the Euler equation. In
Subsection \ref{SS:Lag} and \ref{SS:ODE}, we will present the manifold
structure of the set $\mathcal{G}$ of the Lagrangian maps and the ODE nature
of (E) on $T\mathcal{G}$. These general results have actually been proved even
for Euler equations defined on Riemannian manifolds in \cite{em70} in a rather
geometric language. However, we need to establish a more concrete framework
along with more detailed estimates to be used in the construction of local
invariant manifolds in Subsections \ref{SS:non-auto} -- \ref{SS:InMa}, which
is done in a more directly equation based manner in Subsections \ref{SS:Lag}
and \ref{SS:ODE}. In Subsection \ref{SS:non-auto}, we rewrite in the
Lagrangian coordinates the localized Euler equation in a neighborhood of the
solution curve generated by $v_{0}$ and in Subsection \ref{SS:exp-di}, the
linear exponential dichotomy is given. The unstable integral manifold
corresponding the linear exponential dichotomy is constructed in Subsection
\ref{SS:t-InMa} and then finally the unstable manifold in the Eulerian
coordinates is obtained in Subsection \ref{SS:InMa}.

Throughout this section, we will use $K>0$ as a generic constant depending
only on $r$ and $k$ and $C>0$ only on $n, r, k, q, v_{0}$. Both $K$ and $C$
may change from line to line. We will use $D$ or $\nabla$ to denote the
differentiation with respect to physical variables in $\Omega$ and
$\mathcal{D}$ the Fr\'echet differentiations in function spaces.

\subsection{Lagrangian coordinates and the Lie group of volume preserving
maps}

\label{SS:Lag}

Let $u(t,\cdot):\Omega\rightarrow\Omega$ be the Lagrangian coordinate map
defined by
\begin{equation}
u(0,y)=y\quad\text{ and }u_{t}(t,y)=v(t,u(t,y)). \label{E:LagM}%
\end{equation}
In particular, let $u_{0}(t,y)$ be the Lagrangian map of the steady vector
field $v_{0}(x)$. Throughout the paper, we fix a constant $\mu>\mu_{0}\geq0$
such that
\begin{equation}
\lambda_{u}-\lambda_{cs}>K_{0}\mu\label{E:mu0}%
\end{equation}
where $\mu_{0}$ is the Lyapunov exponent of $v_{0}$. From the definition of
the Lyapunov exponent, we have
\begin{equation}
|u_{0}(t,\cdot)|_{C^{l}}+|\big(u_{0}(t,\cdot)\big)^{-1}|_{C^{l}}\leq
Ce^{l\mu|t|},\quad t\in\mathbb{R},\;0\leq l\leq k+r \label{E:LagM0}%
\end{equation}
for some $C>0$ independent of $t$. This possible exponential growth of the
norm of $u_{0}$ makes the problem much more subtle than the unusual
constructions of the local invariant manifolds in differential equations.

Since the flow is incompressible and $k>1+\frac{n}{q}$, we have for any
$t\in\mathbb{R}$,
\begin{equation}
u(t,\cdot)\in\mathcal{G}\triangleq\{\phi\in W^{k,q}(\Omega,\mathbb{R}^{n}%
)\mid\phi\text{ is a diffeomorphism, }\det(D\phi)\equiv1,\phi(\partial
\Omega)=\partial\Omega\}. \label{E:LieG}%
\end{equation}
Clearly the composition makes $\mathcal{G}$ a group. We will show that
$\mathcal{G}$ is an infinite dimensional submanifold of $W^{k,q}%
(\Omega,\mathbb{R}^{n})$. This will be our configuration space when the Euler
equation is written in the Lagrangian coordinates. We will work with local
coordinates on $\mathcal{G}$.

Formally, the tangent space of $\mathcal{G}$ is given by
\begin{equation}
\label{E:tangentID}T_{id} \mathcal{G} = W_{Euler}^{k, q}, \qquad T_{\phi
}\mathcal{G} = \{w \mid w \circ\phi^{-1} \in W_{Euler}^{k, q} \} \quad
\forall\phi\in\mathcal{G},
\end{equation}
where $W_{Euler}^{k, q}$ defined in \eqref{E:phaseS} is the phase space of the
velocity fields of (E). From the Hodge decomposition, a complementary space of
$W_{Euler}^{k, q}$ in $W^{k, q} (\Omega, \mathbb{R}^{n})$ is given by
\begin{equation}
\label{E:ortho}(W_{Euler}^{k, q})^{\perp}= \{ \nabla h \mid h \in W^{k+1, q}
(\Omega, \mathbb{R})\}.
\end{equation}
Here the orthogonality is in the sense that
\[
\int_{\Omega}w\cdot\nabla h dx = 0, \qquad\forall w \in W_{Euler}^{k, q}, \; h
\in W^{k+1, q} (\Omega, \mathbb{R}).
\]
It is clear that both $W_{Euler}^{k, q}$ and $(W_{Euler}^{k, q})^{\perp}$ are
closed subspaces of $W^{k, q} (\Omega, \mathbb{R}^{n})$ and
\[
W^{k, q} (\Omega, \mathbb{R}^{n}) = W_{Euler}^{k, q} \oplus(W_{Euler}^{k,
q})^{\perp}.
\]
In fact, given any $X \in W^{k, q} (\Omega, \mathbb{R}^{n})$, let
\begin{equation}
\label{E:Helmholtz}w = X - \nabla h
\end{equation}
where $h$ is the solution of
\begin{equation}
\label{E:Hodge1}\Delta h = \nabla\cdot X \quad\text{ in } \Omega\qquad
\nabla_{N} h = X \cdot N \quad\text{ on } \partial\Omega,
\end{equation}
then obviously $X= w+\nabla h$ with $w \in W_{Euler}^{k, q}$ and this verifies
the direct sum.
Locally near the identity map $id$, we will write $\mathcal{G}$ as the graph
of a smooth mapping from $W_{Euler}^{k, q}$ to $(W_{Euler}^{k, q})^{\perp}$
and thus $\mathcal{G}$ is rigorously a smooth manifold. Let $B_{\delta}(\cdot)
$ denote the ball of radius $\delta$ centered at $0$ in the corresponding
Banach space.

\begin{proposition}
\label{P:coordCG} There exists $\delta_{0}>0$ and a smooth mapping
$\Psi:B_{\delta_{0}}(W_{Euler}^{k,q})\rightarrow W^{k,q}(\Omega,\mathbb{R}%
^{n})$, such that $\Psi(0)=id$, $\mathcal{D}\Psi(0)=I$, $\Psi(w)-w\in
(W_{Euler}^{k,q})^{\perp}$ for any $w\in B_{\delta_{0}}(W_{Euler}^{k,q})$,
and
\[
\Big(id+B_{\frac{\delta_{0}}{2}}\big(W^{k,q}(\Omega,\mathbb{R}^{n}%
)\big)\Big)\cap\mathcal{G}\,\subset\,\{\Psi(w)\mid w\in B_{\delta_{0}%
}(W_{Euler}^{k,q})\}\,\subset\,\Big(id+B_{2\delta_{0}}\big(W^{k,q}%
(\Omega,\mathbb{R}^{n})\big)\Big)\cap\mathcal{G}.
\]

\end{proposition}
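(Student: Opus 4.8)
The natural strategy is to realize $\Psi$ as a perturbation of the Helmholtz decomposition \eqref{E:Helmholtz}–\eqref{E:Hodge1}, solved by an implicit function theorem in $W^{k,q}$. For $w \in W_{Euler}^{k,q}$ I will seek $\Psi(w) = id + w + \nabla h(w)$, with $h(w) \in W^{k+1,q}(\Omega,\mathbb{R})$ chosen so that the volume-preserving constraint $\det\big(D(\mathrm{id}+w+\nabla h)\big)\equiv 1$ and the boundary condition $(\mathrm{id}+w+\nabla h)(\partial\Omega)=\partial\Omega$ both hold; the requirement $\Psi(w)-w\in(W_{Euler}^{k,q})^\perp$ is then automatic since $\nabla h$ lies in that complement by \eqref{E:ortho}. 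Expand $\det(I + Dw + D\nabla h) = 1 + \nabla\cdot(w+\nabla h) + Q(Dw + D^2 h)$, where $Q$ collects the quadratic-and-higher terms in the entries of the matrix (a polynomial, hence smooth as a Nemytskii map on $W^{k-1,q}$ because $k-1>\frac nq$ makes $W^{k-1,q}$ a Banach algebra). Since $\nabla\cdot w = 0$, the constraint becomes $\Delta h + Q(Dw + D^2h) = 0$ in $\Omega$, together with a boundary equation that I will encode as $\nabla_N h = g(w,h)$ on $\partial\Omega$ coming from linearizing the condition that the perturbed map preserves $\partial\Omega$ (for the slip/periodic setting this reduces to a compatibility of the normal component). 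Define $F:W_{Euler}^{k,q}\times W^{k+1,q}\to W^{k-1,q}\times W^{k-1/q,q}(\partial\Omega)$ by $F(w,h) = \big(\Delta h + Q(Dw+D^2h),\ \nabla_N h - g(w,h)\big)$; then $F(0,0)=0$ and $\mathcal{D}_h F(0,0) = (\Delta,\ \nabla_N\cdot)$, which is an isomorphism from $W^{k+1,q}$ onto its range modulo the one-dimensional kernel of constants and the solvability condition $\int_\Omega(\cdot) = \int_{\partial\Omega}(\cdot)$ — i.e. the standard Neumann Laplacian is Fredholm of index zero after normalizing $\int_\Omega h = 0$. Restricting the target to the codimension-one affine subspace cut out by the Gauss–Green identity (which $Q$ and $g$ satisfy automatically since $\det = 1$ is equivalent to a divergence-form identity integrating to the volume), the analytic implicit function theorem yields a smooth $h = h(w)$ on a ball $B_{\delta_0}(W_{Euler}^{k,q})$ with $h(0)=0$ and $\mathcal{D}h(0)=0$. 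Setting $\Psi(w) = id + w + \nabla h(w)$ gives $\Psi(0)=id$, $\mathcal D\Psi(0)=I$, and smoothness.

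It remains to establish the two inclusions. For the right-hand inclusion, shrinking $\delta_0$ if necessary so that $|\nabla h(w)|_{W^{k,q}} \le \delta_0$ for $w\in B_{\delta_0}(W_{Euler}^{k,q})$ (possible since $\mathcal Dh(0)=0$, hence $|\nabla h(w)|_{W^{k,q}} = o(|w|)$), we get $|\Psi(w)-id|_{W^{k,q}} \le |w| + |\nabla h(w)|_{W^{k,q}} < 2\delta_0$, and $\Psi(w)\in\mathcal G$ by construction; this is the right inclusion. For the left-hand inclusion, take $\phi\in\mathcal G$ with $|\phi - id|_{W^{k,q}} < \frac{\delta_0}{2}$. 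Apply the Helmholtz decomposition \eqref{E:Helmholtz} to $X = \phi - id$: write $\phi - id = w_0 + \nabla h_0$ with $w_0\in W_{Euler}^{k,q}$, $\nabla h_0\in(W_{Euler}^{k,q})^\perp$, and by boundedness of the Hodge projections, $|w_0|_{W^{k,q}} \le C_0|\phi-id|_{W^{k,q}} < C_0\frac{\delta_0}{2}$. After possibly enlarging the ambient constant in the $\frac{\delta_0}{2}$ so that $C_0\frac{\delta_0}{2} < \delta_0$ (equivalently, choosing the radius of the ambient ball as $\delta_0/(2C_0)$ — I will state the inclusion with that radius), we have $w_0\in B_{\delta_0}(W_{Euler}^{k,q})$. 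Now $\phi$ and $\Psi(w_0)$ are both elements of $\mathcal G$ whose $W_{Euler}^{k,q}$-components (under the direct sum $W^{k,q} = W_{Euler}^{k,q}\oplus(W_{Euler}^{k,q})^\perp$ applied to $\phi - id$ and $\Psi(w_0)-id$) equal the same $w_0$; by the uniqueness clause of the implicit function theorem — the solution $h$ of $F(w_0,h)=0$ in the small ball is unique — we conclude $\phi = \Psi(w_0)$, giving the left inclusion.

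The main obstacle I anticipate is the careful bookkeeping of the boundary condition and the solvability (codimension-one) constraint: one must check that the nonlinear equation $\det(D\Psi(w))\equiv 1$ really is, via the divergence-structure of the determinant, equivalent to a Neumann-type elliptic problem for $h$ whose right-hand side automatically satisfies the compatibility condition $\int_\Omega f = \int_{\partial\Omega}g$, so that the implicit function theorem applies on the correct closed subspace of the target. The identity $\det(D\phi)\equiv 1 \iff D\phi$ has a divergence-free structure (Piola identity: $\partial_i(\mathrm{cof}\,D\phi)_{ij}=0$) is what makes this work, and unwinding it cleanly for $\phi$ close to the identity, together with the correct function-space indices so that all the Nemytskii maps are smooth (which is exactly where $k>1+\frac nq$ enters, making $W^{k-1,q}$ a multiplicative algebra), is the technical heart of the argument. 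Everything else — the uniqueness giving the left inclusion, the size estimates giving the right inclusion — is routine once the implicit function theorem setup is in place.
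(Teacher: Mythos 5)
Your overall strategy—seeking $\Psi(w)=\mathrm{id}+w+\nabla h(w)$, expanding the determinant using the Banach-algebra property of $W^{k-1,q}$, and invoking the implicit function theorem together with Neumann elliptic theory, then getting the two inclusions from the Helmholtz projection and the uniqueness in the IFT—is the same as the paper's and is sound in outline. You also correctly flag the one genuinely delicate point, but you do not resolve it, and as stated it is a real gap: you cannot restrict the target of $F$ to the codimension-one subspace $\{(f,g):\int_\Omega f=\int_{\partial\Omega}g\}$ because, for generic $(w,h)$, the identity $\int_\Omega\big(\Delta h+Q\big)\,dy=\int_{\partial\Omega}\big(\nabla_N h-g\big)\,dS$ does \emph{not} hold. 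Writing it out, it would require $\int_\Omega\big(\det(D\phi)-1\big)\,dy=\int_{\partial\Omega}d\circ\phi\,dS$ for $\phi=\mathrm{id}+w+\nabla h$, which is true only to leading order. Your remark that ``$\det=1$ is equivalent to a divergence-form identity integrating to the volume'' is circular here: that identity is only available \emph{after} you know $F(w,h)=0$, not on the whole domain of $F$, so it cannot be used to shrink the target before applying the IFT.

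The paper avoids this precisely by two devices you don't use. First, it encodes the boundary constraint with a signed-distance function $d$, defining the boundary component of $G(\phi)$ as $d\circ\phi|_{\partial\Omega}$ with its mean over $\partial\Omega$ subtracted. This forces the boundary component to be mean-zero \emph{for every} $\phi$, so $G$ honestly takes values in the normalized target space $Y$; moreover, because a constant shift is also subtracted from the linearization (which compensates exactly the Gauss--Green obstruction), $\mathcal{D}G(\mathrm{id})|_{(W_{Euler}^{k,q})^\perp}$ becomes a genuine isomorphism onto $Y$, with no residual solvability constraint. Second, $G(\phi)=(1,0)$ then only says $\det(D\phi)\equiv 1$ and $d\circ\phi\equiv a$ for some constant $a$; a separate argument (the volume identity $|\phi(\Omega)|=|\Omega|$, combined with the fact that $\{d=a\}$ strictly contains or is contained in $\partial\Omega$ when $a\neq 0$) shows $a=0$, hence $\phi\in\mathcal{G}$. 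This is exactly the mechanism by which the volume-preservation condition and the boundary condition interact and ``use up'' the one-dimensional ambiguity; your proposal would need to incorporate both the mean normalization and the volume argument to close the gap you yourself identified. The rest of your argument (right inclusion from $\mathcal{D}h(0)=0$; left inclusion from the Helmholtz split plus IFT uniqueness, at the cost of adjusting the inner radius by a bounded factor $C_0$) is fine and matches the paper's conclusion.
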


\begin{proof}
Since $\partial\Omega$ is a smooth compact hypersurface in $\mathbb{R}^{n}$,
the distance function to $\partial\Omega$ is smooth in a neighborhood of
$\partial\Omega$. Let $d:\mathbb{R}^{n}\rightarrow\mathbb{R}$ be a smooth
function with compact support such that it coincides with this distance
function in a neighborhood of $\partial\Omega$. Consider the mapping
\[
G:\;W^{k,q}(\Omega,\mathbb{R}^{n})\rightarrow Y\triangleq\{(f,g)\in
W^{k-1,q}(\Omega,\mathbb{R})\times W^{k-\frac{1}{q},q}(\partial\Omega
,\mathbb{R})\mid\int_{\partial\Omega}g\,dS=0\}
\]
defined as
\[
G(\phi)=\big(\det(D\phi),\;(d\circ\phi\big)|_{\partial\Omega}-\frac
{1}{|\partial\Omega|}\int_{\partial\Omega}d\circ\phi\,dS)
\]
where $|\partial\Omega|$ denotes the area of $\partial\Omega$. Obviously, $G$
is a smooth mapping and $G|_{\mathcal{G}}\equiv(1,0)$. Moreover, suppose
$\phi\in U$ and $G(\phi)=(1,0)$ where $U$ is a neighborhood of the identity
map $id$ in $W^{k,q}(\Omega,\mathbb{R}^{n})$. Then $\phi$ is a diffeomorphism
from $\Omega$ to its image and
\[
|\phi(\Omega)|=|\Omega|\quad\text{ and }\quad d\circ\phi=a\triangleq\frac
{1}{|\partial\Omega|}\int_{\partial\Omega}d\circ\phi\,dS
\]
imply that $d\circ\phi\equiv a=0$. Otherwise $a\neq0$ would imply that
$\phi(\Omega)$ either strictly covers $\Omega$ or is strictly contained in
$\Omega$, either of which contradicts with the first identity above.
Therefore
\[
U\cap\mathcal{G}=U\cap G^{-1}\{(1,0)\}.
\]
It is easy to compute
\[
\mathcal{D}G(id)X=(\nabla\cdot X,\;X\cdot N-\frac{1}{|\partial\Omega|}%
\int_{\partial\Omega}X\cdot N\,dS)\in Y,\quad\forall\,X\in W^{k,q}%
(\Omega,\mathbb{R}^{n}).
\]
From the standard theory of elliptic problems with Neumann boundary
conditions,
\[
DG(id)\nabla h=(\Delta h,\,\nabla_{N}h-\frac{1}{|\partial\Omega|}\int_{\Omega
}\Delta h\,dx)
\]
is an isomorphism from $(W_{Euler}^{k,q})^{\perp}$ to $Y$. Therefore the
proposition follows from the Implicit Function Theorem.
\end{proof}

Near any $\phi_{0} \in\mathcal{G}$, the local coordinate map $\Psi$ composed
with the right translation, $\Psi(\cdot) \circ\phi_{0}$, gives a smooth local
coordinate map near $\phi_{0}$. Therefore $\mathcal{G}$ is a Banach
submanifold with the model space $W_{Euler}^{k, q}$. It is well-known
\cite{em70} that $\mathcal{G}$ is not such a standard Lie group as one needs
to be careful with the smoothness of the left translations. Let $\mathcal{C}%
(\phi_{1}, \phi_{2}) = \phi_{1} \circ\phi_{2}$ and it is straightforward to verify

\begin{proposition}
\label{P:translation} $\mathcal{C} \in C^{l} \Big( \big(\mathcal{G} \cap
W^{k+l, q}(\Omega, \mathbb{R}^{n})\big) \times\mathcal{G}, \mathcal{G}\Big)$
and, for any $\phi_{2} \in\mathcal{G}$, the right translation $\mathcal{C}%
(\cdot, \phi_{2}) \in C^{\infty}(\mathcal{G})$.
\end{proposition}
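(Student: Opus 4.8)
The plan is to establish Proposition \ref{P:translation} by reducing the smoothness of composition to the smoothness of a concrete nonlinear substitution operator, and then quantifying the derivative loss explicitly. First I would recall that $\mathcal{G}$ is a Banach submanifold of $W^{k,q}(\Omega,\mathbb{R}^n)$ modeled on $W^{k,q}_{Euler}$ via Proposition \ref{P:coordCG}, so it suffices to check the stated smoothness after composing with the local coordinate charts $\Psi(\cdot)\circ\phi_0$; in particular, working in charts, $\mathcal{C}$ is smooth of class $C^l$ iff the raw composition map $(\phi_1,\phi_2)\mapsto\phi_1\circ\phi_2$ is $C^l$ as a map into $W^{k,q}(\Omega,\mathbb{R}^n)$, since the chart maps are themselves smooth and preserve the relevant function space. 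So the heart of the matter is the regularity of $(\phi_1,\phi_2)\mapsto\phi_1\circ\phi_2$.

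Next I would separate the two variables. Fixing $\phi_2\in\mathcal{G}$, the map $\phi_1\mapsto\phi_1\circ\phi_2$ is \emph{linear} in $\phi_1$; it is bounded from $W^{k,q}$ to $W^{k,q}$ because $\phi_2$ and $\phi_2^{-1}$ lie in $W^{k,q}\subset C^1$ (as $k>1+\frac nq$) and are volume-preserving, so the chain rule together with the change-of-variables formula (with unit Jacobian) gives $|\phi_1\circ\phi_2|_{W^{k,q}}\le C(\phi_2)|\phi_1|_{W^{k,q}}$. A bounded linear map is $C^\infty$, which is the second assertion. The dependence on $\phi_1$ costs nothing; all the derivative loss sits in the $\phi_2$ variable. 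For that I would write the Gâteaux derivative of $\mathcal{C}(\phi_1,\cdot)$ at $\phi_2$ in the direction $w$ (a tangent vector at $\phi_2$) as $D\mathcal{C}(\phi_1,\phi_2)(0,w)=(D\phi_1\circ\phi_2)\,w$. For this to define a \emph{bounded} operator into $W^{k,q}$, one needs $D\phi_1\circ\phi_2\in W^{k,q}$, i.e.\ $\phi_1\in W^{k+1,q}$; differentiating $j$ more times in the $\phi_2$-direction pulls down $D^j\phi_1\circ\phi_2$, which requires $\phi_1\in W^{k+j,q}$. Hence restricting the first argument to $\mathcal{G}\cap W^{k+l,q}$ buys exactly $l$ derivatives of smoothness.

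The technical content is then to verify rigorously that these formal Gâteaux derivatives are the genuine Fréchet derivatives and depend continuously on $(\phi_1,\phi_2)$ in the stated topologies. I would do this by the standard argument: show the candidate derivative is continuous in $(\phi_1,\phi_2)$ as a bounded multilinear map, establish the first-order Taylor remainder estimate $|\phi_1\circ(\phi_2+w)-\phi_1\circ\phi_2-(D\phi_1\circ\phi_2)w|_{W^{k,q}}=o(|w|_{W^{k,q}})$ using the fundamental theorem of calculus along the segment $\phi_2+sw$ and the continuity of translation/substitution in $W^{k,q}$ (here one uses $\phi_1\in W^{k+1,q}$ to control the $k$-th derivative of the remainder, and that $\phi_2+sw$ stays a diffeomorphism for small $w$), and then induct on the order of differentiation, each step consuming one more derivative of $\phi_1$. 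Composition of $W^{k,q}$ maps and multiplication in $W^{k,q}$ being a Banach algebra (again because $k>1+\frac nq$) keep all the intermediate quantities in the right spaces.

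The main obstacle is purely the bookkeeping of the derivative loss combined with the fact that the segment $\phi_2+sw$ must remain inside $\mathcal{G}$ (or at least inside the diffeomorphisms) to make sense of the composition and its inverse; one handles this by shrinking neighborhoods and using that $\mathcal{G}$ is a submanifold, so that one may instead differentiate along curves in $\mathcal{G}$ through $\phi_2$, which by Proposition \ref{P:coordCG} are parametrized by $W^{k,q}_{Euler}$. I expect no conceptual difficulty — this is the classical Ebin--Marsden \cite{em70} observation that right translation is smooth while left translation loses derivatives — so the proof is a direct verification, and I would present it as ``straightforward'' after indicating the derivative-loss mechanism above.
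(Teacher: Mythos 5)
Your proposal is correct and follows the classical Ebin--Marsden route that the paper implicitly invokes (the paper itself gives no proof, stating only that the result "is straightforward to verify"). In particular, your two key observations — that right translation is a bounded linear map on the ambient $W^{k,q}$ (indeed, in the paper's own charts $\Psi(\cdot)\circ\phi_0$ right translation becomes essentially the identity), and that each $\phi_2$-derivative of the composition pulls down one extra derivative of $\phi_1$, so that joint $C^l$ regularity costs $\phi_1\in W^{k+l,q}$ — are exactly the intended content of the proposition.
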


\subsection{Euler equation as an ODE on $T\mathcal{G}$}

\label{SS:ODE}

It is well-known that the pressure $p$ can be represented in terms of the
velocity field $v$. In fact, by taking the inner product of $v_{t}$ and $N$ on
$\partial\Omega$ and the divergence of the $v_{t}$ and using $\nabla\cdot v=0$
in $\Omega$ and $N\cdot V=0$ on $\partial\Omega$, we obtain
\begin{equation}%
\begin{cases}
-\Delta p=\Sigma_{i,j=1}^{n}\partial_{i}(v^{j}\partial_{j}v^{i})=\Sigma
_{i,j=1}^{n}\partial_{i}v^{j}\partial_{j}v^{i}=\text{tr}(Dv)^{2}\qquad &
\text{ in }\Omega\\
\nabla_{N}p=-N\cdot(v\cdot\nabla)v=-\nabla_{v}(v\cdot N)+\nabla_{v}N\cdot
v=v\cdot\Pi(v) & \text{ on }\partial\Omega
\end{cases}
\label{E:pressure1}%
\end{equation}
where the symmetric operator $\Pi\in C^{\infty}\big(\Omega,L(T\partial
\Omega)\big)$ is the second fundamental form of $\partial\Omega$ defines as
$\Pi(x)(\tau)=\nabla_{\tau^{\top}}N$ with $\tau^{\top}\in T_{x}\partial\Omega$
being the tangential component of $\tau$.

Based on the form of the pressure, we define the symmetric bounded bilinear
mapping $B:(W^{k,q}(\Omega,\mathbb{R}^{n}))^{2}\rightarrow W^{k,q}%
(\Omega,\mathbb{R}^{n})$ as
\[
\mathcal{B}(X_{1},X_{2})=\nabla\gamma
\]
where
\[%
\begin{cases}
-\Delta\gamma=\text{tr}(DX_{1}DX_{2})=\Sigma_{i,j=1}^{n}\partial_{i}X_{1}%
^{j}\partial_{j}X_{2}^{i}\qquad & \text{ in }\Omega\\
\nabla_{N}\gamma=X_{1}\cdot\Pi(X_{2}) & \text{ on }\partial\Omega.
\end{cases}
\]
The boundedness of $\mathcal{B}$ is clear from the standard elliptic theory.
Note that here we do not assume $\nabla\cdot X_{1,2}=0$ in $\Omega$ or $N\cdot
X_{1,2}$ on $\partial\Omega$ in the definition of $\mathcal{B}$. According to
the Hodge decomposition given through \eqref{E:Hodge1} and a similar
calculation as in \eqref{E:pressure1}, it holds that
\begin{equation}
DX_{1}(X_{2})+\mathcal{B}(X_{1},X_{2})=(X_{2}\cdot\nabla)X_{1}+\mathcal{B}%
(X_{1},X_{2})\in W_{Euler}^{k,q},\qquad\forall X_{1},\,X_{2}\in W_{Euler}%
^{k,q}. \label{E:Hodge2}%
\end{equation}
As a side remark, it indicates that, when embedded in $L^{2}(\Omega
,\mathbb{R}^{n})$, $\mathcal{B}$ is the second fundamental form of
$\mathcal{G}$ at $id$ which can be rigorously verified through a standard procedure.

Define the mapping $\mathcal{P}: \mathcal{G} \times\left(  W^{k, q} (\Omega,
\mathbb{R}^{n})\right)  ^{2} \to W^{k, q} (\Omega, \mathbb{R}^{n})$ as
\begin{equation}
\label{E:CP}\mathcal{P}(\phi, X_{1}, X_{2})= \mathcal{B}(X_{1} \circ\phi^{-1},
X_{2} \circ\phi^{-1}) \circ\phi.
\end{equation}
We also define the projection $Q: \mathcal{G} \times W^{k, q} (\Omega,
\mathbb{R}^{n}) \to T\mathcal{G}$ as
\begin{equation}
\label{E:Helmholtz1}Q(\phi, X) = (X \circ\phi^{-1} - \nabla h) \circ\phi\in
T_{\phi}\mathcal{G}%
\end{equation}
where $\nabla h$ for $X \circ\phi^{-1}$ is defined in \eqref{E:Helmholtz}.
Obviously, $\mathcal{P}$ is symmetrically bilinear in $X_{1}$ and $X_{2}$. As
in \eqref{E:Hodge2}, it holds
\begin{equation}
\label{E:Hodge3}\Big(\big(D(X_{1} \circ\phi^{-1}) \big) \circ\phi\Big)(X_{2})+
\mathcal{P}(\phi, X_{1}, X_{2}) \in T_{\phi}\mathcal{G} = \{w: \Omega
\to\mathbb{R}^{n} \mid w \circ\phi^{-1} \in W_{Euler}^{k, q} \}.
\end{equation}
In fact, \eqref{E:Hodge2} also leads to that, when embedded in $L^{2}$,
$\mathcal{P}(\phi, \cdot, \cdot)$ is the second fundamental form of
$\mathcal{G}$ at $\phi$. Euler equation (E) and \eqref{E:pressure1} imply that
the Euler equation (E) takes the form in the Lagrangian coordinates
\begin{equation}
\label{E:EulerL}u_{tt} + \mathcal{P}(u, u_{t}, u_{t}) =0, \quad u(t)
\in\mathcal{G}.
\end{equation}
Moreover, for any $\phi_{0} \in\mathcal{G}$, we have
\begin{equation}
\label{E:RT}\mathcal{P}(\phi\circ\phi_{0}, X_{1} \circ\phi_{0}, X_{2}
\circ\phi_{0}) = \mathcal{P}(\phi, X_{1}, X_{2}) \circ\phi_{0},
\end{equation}
i.e. $\mathcal{P}$ is invariant under the right translation. Therefore,
\begin{equation}
\label{E:symm}u(t) \circ\phi_{0} \text{ is also a solution for any solution }
u(t) \text{ of \eqref{E:EulerL}}.
\end{equation}
The following proposition states that $\mathcal{P}$ is a smooth mapping and
thus has no regularity loss which is far from trivial even though
$\mathcal{B}$ is a bounded bilinear operator. To see this, one note that even
the dependence of the term $X_{1} \circ\phi\in W^{k, q}$ on $\phi\in W^{k, q}$
is not smooth unless $X_{1}$ belongs to a space of better regularity. The
proof of the proposition is essentially a careful analysis of the commutator
between the operations of $\mathcal{B}$ and the composition by $\phi
\in\mathcal{G}$.

\begin{proposition}
\label{P:CP} $\mathcal{P}: \mathcal{G} \times\left(  W^{k, q} (\Omega,
\mathbb{R}^{n})\right)  ^{2} \to W^{k, q} (\Omega, \mathbb{R}^{n})$ and $Q:
\mathcal{G} \to L\big(W^{k, q} (\Omega, \mathbb{R}^{n})\big)$ are $C^{\infty}$
and, for any $m>0$, there exist $C_{0}, K>0$ depending only on $m$ and $k$
such that
\begin{align}
&  |(\mathcal{D}_{\phi})^{m} \mathcal{P}(\phi, X_{1}, X_{2})|_{W^{k, q}} \le
C_{0} |D \phi|_{W^{k-1, q}}^{K} |X_{1}|_{W^{k, q}} |X_{2}|_{W^{k, q}%
}\label{E:CP1}\\
&  |(\mathcal{D}_{\phi})^{m} Q(\phi, X)|_{W^{k, q}} \le C_{0} |D
\phi|_{W^{k-1, q}}^{K} |X|_{W^{k, q}}.
\end{align}

\end{proposition}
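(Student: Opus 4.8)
The plan is to reduce everything to estimating, for fixed $\phi \in \mathcal{G}$ and a test function $X = X_1\circ\phi^{-1}$, the commutator between the bounded bilinear operator $\mathcal{B}$ and the operation of composing by $\phi$. First I would record the explicit formula: $\mathcal{P}(\phi, X_1, X_2) = (\nabla\gamma)\circ\phi$ where $\gamma$ solves the Neumann problem $-\Delta\gamma = \mathrm{tr}(D(X_1\circ\phi^{-1})\,D(X_2\circ\phi^{-1}))$ in $\Omega$ with $\nabla_N\gamma = (X_1\circ\phi^{-1})\cdot\Pi(X_2\circ\phi^{-1})$ on $\partial\Omega$. The key observation — the one that saves a derivative — is that the source term of this elliptic problem, \emph{after pulling back by $\phi$}, does not actually involve $D\phi^{-1}$ undifferentiated in a dangerous way: writing $Y_i = X_i\circ\phi^{-1}$, we have $D Y_i = (DX_i\circ\phi^{-1})(D\phi^{-1})$, and the product structure $\mathrm{tr}(DY_1\,DY_2)$ contracts the two $D\phi^{-1}$ factors against each other. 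The real point, though, is that we do \emph{not} estimate $Y_i$ in $W^{k,q}$ and then pull back; instead we work with the composed quantity directly and use the change-of-variables/chain-rule calculus on $\mathcal{G}$, so that only $D\phi$ (not derivatives of $D\phi$ beyond what $W^{k-1,q}$ controls) and the $W^{k,q}$ norms of $X_1, X_2$ appear.

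The main steps, in order, would be: (i) establish the smoothness and the estimate for $m=0$. For this I would conjugate the elliptic solution operator: let $N_\phi$ denote the operator sending a source pair to $\gamma\circ\phi$; then $N_\phi = (\text{pull-back by }\phi)\circ N_{id}\circ(\text{push-forward by }\phi)$, and one shows $N_\phi$ is a bounded operator on the relevant $W^{k,q}$ scale with norm controlled by a power of $|D\phi|_{W^{k-1,q}}$, using that $\phi$ and $\phi^{-1}$ are bi-Lipschitz diffeomorphisms of Sobolev class $W^{k,q}$ with $k>1+n/q$ (so $W^{k-1,q}\hookrightarrow C^0$ is an algebra-adjacent situation and composition operators behave well). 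Equivalently — and this is cleaner — one transforms the PDE for $\gamma\circ\phi$ into a second-order elliptic equation in the $y$-variable with coefficients that are algebraic expressions in $D\phi$ (the transformed Laplacian becomes $\mathrm{div}(a(D\phi)\nabla\cdot)$ with $a$ built from $(D\phi)^{-1}(D\phi)^{-T}\det D\phi = (D\phi)^{-1}(D\phi)^{-T}$ since $\det D\phi=1$), and then invoke elliptic regularity with $W^{k-1,q}$ coefficients. (ii) Differentiate in $\phi$. Because $\phi\mapsto N_\phi$ and $\phi\mapsto$ (composition operators, as maps into $L(W^{k,q})$) are smooth once we have the algebra/coefficient picture — the $\phi$-dependence enters only through $D\phi \in W^{k-1,q}$ and smooth algebraic/elliptic-solution operations of it — the Fréchet derivatives $(\mathcal{D}_\phi)^m\mathcal{P}$ are finite sums of terms of the same shape: an elliptic solve sandwiched between pull-back/push-forward operators, with extra factors coming from differentiating $a(D\phi)$ and the composition operators, each such factor again bounded by a power of $|D\phi|_{W^{k-1,q}}$ times $|X_i|_{W^{k,q}}$. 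A bookkeeping (product rule, counting the number of factors) then yields \eqref{E:CP1} with $K$ depending only on $m$ and $k$. (iii) The statement for $Q$ is strictly easier and entirely parallel: $Q(\phi,X) = (X\circ\phi^{-1} - \nabla h)\circ\phi$ where $\nabla h$ solves \eqref{E:Hodge1} for $X\circ\phi^{-1}$; the same conjugated-elliptic-operator analysis applies, and since $Q$ is linear in $X$ we just track $|X|_{W^{k,q}}$.

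The main obstacle I expect is step (i)/(ii): proving that the \emph{composed} objects are smooth in $\phi$ with no derivative loss, since, as the text warns, $\phi\mapsto X_1\circ\phi$ is \emph{not} smooth from $W^{k,q}$ to $W^{k,q}$ — it loses a derivative. The resolution is that $\mathcal{P}$ only ever needs the composition operators acting between $W^{k,q}$ and itself (a bounded, and — crucially — smoothly $\phi$-dependent operation when viewed as a map $\phi \mapsto (\,\cdot\circ\phi) \in L(W^{k,q})$ is again problematic!), so the honest fix is to never let a raw $\phi$-composition stand alone: one always pairs a push-forward with a pull-back around an elliptic solve, and the \emph{pair} is smooth because the Jacobian cancellations and the smoothing of the elliptic operator compensate the one-derivative loss of each individual composition. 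Making this cancellation precise — i.e. showing the transformed coefficient $a(D\phi)$ and boundary data depend smoothly on $\phi\in\mathcal{G}$ as elements of $W^{k-1,q}$, and that elliptic regularity then returns the full $W^{k,q}$ regularity of $\gamma\circ\phi$ — is the technical heart of the proof, and it is exactly where the hypothesis $k > 1 + n/q$ (so that $W^{k-1,q}$ is a multiplicative algebra and controls $C^0$, keeping $D\phi$ bounded and invertible) is used.
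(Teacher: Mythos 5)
Your structural intuition is right — the key is that a push-forward paired with a pull-back around an elliptic solve should, and does, avoid the derivative loss that each composition incurs separately — but the concrete mechanism you propose does not actually realize this cancellation. The pitfall is in the reconstruction step. Pulling back the Neumann problem for $\gamma$ to a $y$-variable problem
\[
-\nabla_y\cdot\big(a(D\phi)\,\nabla_y\Gamma\big)=f_\phi,\qquad a(D\phi)=(D\phi)^{-1}(D\phi)^{-T},\qquad \Gamma=\gamma\circ\phi,
\]
with coefficients in $W^{k-1,q}$ and source $f_\phi\in W^{k-1,q}$, elliptic regularity gives at best $\Gamma\in W^{k,q}$ — and indeed $\gamma\circ\phi$ is genuinely only $W^{k,q}$, not $W^{k+1,q}$, because $\phi$ itself is only $W^{k,q}$ (the chain-rule term $(D\gamma\circ\phi)D^{k+1}\phi$ is not defined). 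But the object we need is $\mathcal{P}(\phi,X_1,X_2)=(\nabla_x\gamma)\circ\phi=(D\phi)^{-T}\nabla_y\Gamma$; with $\nabla_y\Gamma\in W^{k-1,q}$ and $(D\phi)^{-T}\in W^{k-1,q}$ the naive product estimate gives only $W^{k-1,q}$ — a loss of one derivative. The loss is real in that decomposition: the $W^{k,q}$-ness of $(\nabla_x\gamma)\circ\phi$ is a direct chain-rule fact (the worst term in $D^k\big((\nabla\gamma)\circ\phi\big)$ is $(D^2\gamma\circ\phi)D^k\phi\in L^\infty\cdot L^q$), and decomposing it as $(D\phi)^{-T}\cdot\nabla_y(\gamma\circ\phi)$ hides the cancellation $(D\phi)^{-T}(D\phi)^{T}=I$ that makes it possible. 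Even if you somehow improved the transformed elliptic regularity, the final multiplication by $(D\phi)^{-T}\in W^{k-1,q}$ would still cap you at $W^{k-1,q}$.

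The paper sidesteps this by never forming $\gamma\circ\phi$ or solving a transformed PDE. It differentiates the target quantity $(\nabla\gamma)\circ\phi$ in the auxiliary parameters $s_1,\dots,s_m$ directly, pushes the resulting derivative $Z$ forward to Eulerian form $\tilde Z=Z\circ\phi^{-1}={\boldsymbol{D}}_{s_1}\cdots{\boldsymbol{D}}_{s_m}\nabla\gamma$ (where ${\boldsymbol D}_{s_i}=\partial_{s_i}+\nabla_{\tau_i}$ are material-type derivatives), and then estimates $\tilde Z$ in $W^{k,q}$ by controlling its divergence, curl, and normal trace on $\partial\Omega$ — each of which lands in $W^{k-1,q}$ (resp.\ $W^{k-1/q,q}(\partial\Omega)$) through commutator identities such as \eqref{E:Z1}. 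The elliptic gain enters only via this div-curl-trace estimate for $\tilde Z$, in the original $x$-variables where the Laplacian is constant-coefficient; no rough-coefficient elliptic solve is needed. If you want to rescue an argument along your lines, you would have to set up the transformed problem for a quantity whose gradient is already $(\nabla_x\gamma)\circ\phi$ without the $(D\phi)^{-T}$ prefactor, or equivalently keep track of the $(D\phi)^{-T}(D\phi)^T$ cancellation at the level of the elliptic estimate — which is essentially what the paper's div-curl-trace computation accomplishes.
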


Here $(\mathcal{D}_{\phi})^{m} \mathcal{P}$ should be considered as a
multilinear operator from $T_{\phi}\mathcal{G}$ to $W^{k, q} (\Omega)$. Since
$\mathcal{P}$ is bilinear in $X_{1,2}$, the bounds on the derivatives of
$\mathcal{P}$ with respect to $X_{1,2}$ follow from \eqref{E:CP1} for $m=0$.
Also, we note that $|D \phi|_{W^{k-1, q}}\ge1$ since $k-1 > \frac nq$ and $det
D \phi\equiv1$.

\begin{proof}
We will present only the proof for $\mathcal{P}$ as the one for $Q$ follows
through almost exactly the same (or even slightly simpler) procedure. Due to
the invariance \eqref{E:RT} of $\mathcal{P}$ under the right translation, we
only need to show its smoothness near $\phi=id$. As $\phi$ belongs to the
manifold $\mathcal{G}$, the smoothness of $\mathcal{P}$ is equivalent to the
smoothness of $\mathcal{P}\big(\Phi(w),X_{1},X_{2}\big)$ with respect to $w\in
W_{Euler}^{k,q}$ and $X_{1,2}\in W^{k,q}(\Omega,\mathbb{R}^{n})$ for any
smooth local coordinate map $\Phi:B_{\delta}(W_{Euler}^{k,q})\rightarrow
\mathcal{G}$. Moreover, to prove the Fr\'{e}chet smoothness of $\mathcal{P}$,
it suffices to show the G\^{a}teaux differentiability of $\mathcal{P}$ up to
the $m$-th order for any $m>0$, which would imply that G\^{a}teaux derivative
$\mathcal{D}^{m-1}\mathcal{P}$ is continuous and thus it is also the
$(m-1)$-th Fr\'{e}chet derivative. To show the G\^{a}teaux differentiability
up to the $m$-th order, it suffices to prove the smoothness of
\[
\mathcal{P}\big(\phi(s_{1},\ldots,s_{m}),X_{1}(s_{1},\ldots,s_{m}),X_{2}%
(s_{1},\ldots,s_{m})\big)\in W^{k,q}(\Omega,\mathbb{R}^{n})
\]
for any $\phi(s_{1},\ldots,s_{m})\in\mathcal{G}\subset W^{k,q}(\Omega
,\mathbb{R}^{n})$ and $X_{1,2}(s_{1},\ldots,s_{m})\in W^{k,q}(\Omega
,\mathbb{R}^{n})$ with smooth parameters $(s_{1},\ldots,s_{m})\in
U\subset\mathbb{R}^{m}$. We will show by induction
\begin{equation}
Z\triangleq\partial_{s_{1}}\ldots\partial_{s_{m}}\mathcal{P}(\phi,X_{1}%
,X_{2})|_{s_{1}=\ldots=s_{m}=0}\in W^{k,q}(\Omega,\mathbb{R}^{n}) \tag{ML}%
\end{equation}
and obtain its bound in the form of \eqref{E:CP1}.

The boundedness of the bilinear transformation $\mathcal{B}$ implies
\eqref{E:CP1} and (ML) for $m=0$. Assume (ML) and \eqref{E:CP1} hold for $0
\le m<m_{0}$ and we will prove it for $m=m_{0}$. Let $\mathcal{P}(\phi, X_{1},
X_{2}) = (\nabla\gamma) \circ\phi$, where $\gamma$, defined as in the
definition of $\mathcal{B}$, depends on $s_{1}, \ldots, s_{m_{0}}$ through
$\phi$, $X_{1}$, and $X_{2}$. In the following, since it will be much easier
to carry out some calculations in the Eulerian coordinates, let
\[
\tilde X_{j} = X_{j} \circ\phi^{-1}, \quad j=1,2; \qquad\tau_{i} =
(\partial_{s_{i}} \phi) \circ\phi^{-1}, \quad{\boldsymbol{D}}_{s_{i}} =
\partial_{s_{i}} + \nabla_{\tau_{i}}, \qquad i=1, \ldots, m_{0}%
\]
and
\[
\tilde Z = {\boldsymbol{D}}_{s_{1}} \ldots{\boldsymbol{D}}_{s_{m_{0}}}
\nabla\gamma= Z \circ\phi^{-1}.
\]
Clearly, it is sufficient to show $\tilde Z \in W^{k, q}$, which will be
achieved by studying its normal component on $\partial\Omega$, divergence, and curl.

To start, for any vector field $W\in W^{k-1+m_{0},q}(\Omega,\mathbb{R}^{n})$,
let $Y(y)=\big(D\phi(y)\big)^{-1}W\big(\phi(y)\big)$, or equivalently
$D\phi(Y)=W\circ\phi$, then
\[
D\phi(\partial_{s_{i}}Y)=-D\phi_{s_{i}}(Y)+\big((DW)\circ\phi\big)\phi_{s_{i}%
}\implies\partial_{s_{i}}Y\in W^{k-1,q}(\Omega,\mathbb{R}^{n}).
\]
Differentiating the above identity one more time implies
\[
D\phi(\partial_{s_{j}s_{i}}Y)=-D\phi_{s_{j}}(\partial_{s_{i}}Y)-D\phi_{s_{i}%
}(\partial_{s_{j}}Y)-D\phi_{s_{j}s_{i}}(Y)+\big((DW)\circ\phi\big)\phi
_{s_{j}s_{i}}+\big((D^{2}W)\circ\phi\big)(\phi_{s_{j}},\phi_{s_{i}})
\]
and thus the smoothness of $\phi$ and $W$ yield
\[
\partial_{s_{j}s_{i}}Y\in W^{k-1,q}(\Omega,\mathbb{R}^{n})\quad\text{ if
}m_{0}\geq2.
\]
Repeating this procedure we obtain $\partial_{s_{i_{1}}\ldots s_{i_{m_{0}}}%
}Y\in W^{k-1,q}(\Omega,\mathbb{R}^{n})$ inductively. Changing from the
Eulerian coordinates to the Lagrangian coordinates, we have
\[
\big(\nabla_{W}\tilde{Z}-{\boldsymbol{D}}_{s_{1}}\ldots{\boldsymbol{D}%
}_{s_{m_{0}}}(D^{2}\gamma(W))\big)\circ\phi=\nabla_{Y}Z-\partial_{s_{1}}%
\ldots\partial_{s_{m_{0}}}\nabla_{Y}\big((\nabla\gamma)\circ\phi\big).
\]
Applying the induction assumption to the last term above and using the
commutator formula $[\partial_{s_{i}},\nabla_{Y}]=\nabla_{\partial_{s_{i}}Y}$
to move the $\nabla_{Y}$ to the outside to produce $\nabla_{Y}Z$, we obtain
\begin{equation}
\big(\nabla_{W}\tilde{Z}-{\boldsymbol{D}}_{s_{1}}\ldots{\boldsymbol{D}%
}_{s_{m_{0}}}(D^{2}\gamma(W))\big)\circ\phi=\nabla_{Y}Z-\partial_{s_{1}}%
\ldots\partial_{s_{m_{0}}}\nabla_{Y}\big((\nabla\gamma)\circ\phi\big)\in
W^{k-1,q}. \label{E:Z1}%
\end{equation}

Taking $W=e_{1},\ldots,e_{n}$ of the standard basis of $\mathbb{R}^{n}$,
\eqref{E:Z1} and the definition of $\mathcal{P}$ imply the curl $\nabla
\times\tilde{Z}$ contains only the commutators terms and thus satisfies
\[
\nabla\times\tilde{Z}\in W^{k-1,q}.
\]
Similarly the divergence satisfies
\[
\nabla\cdot\tilde{Z}+{\boldsymbol{D}}_{s_{1}}\ldots{\boldsymbol{D}}_{s_{m_{0}%
}}(\text{tr}(D\tilde{X}_{1})(D\tilde{X}_{2}))\in W^{k-1,q}.
\]
Expanding the above last term using the product rule on ${\boldsymbol{D}%
}_{s_{i}}$, it consists of terms in the form of
\[
({\boldsymbol{D}}_{s_{i_{1}}}\ldots{\boldsymbol{D}}_{s_{i_{m}}}\partial
_{l_{1}}\tilde{X}_{1}^{l_{2}})({\boldsymbol{D}}_{s_{j_{1}}}\ldots
{\boldsymbol{D}}_{s_{j_{m_{0}-m}}}\partial_{l_{2}}\tilde{X}_{2}^{l_{1}}%
),\quad\{i_{1},\ldots,i_{m},\,j_{1},\ldots j_{m_{0}-m}\}=\{1,\ldots,m_{0}\}.
\]
Move $\partial_{l_{1}}$ and $\partial_{l_{2}}$ to the outside in the same
fashion as in the derivation of \eqref{E:Z1} (replacing $W$ by $e_{l_{1,2}}$
and $\nabla\gamma$ by $\tilde{X}_{1,2}^{l_{1,2}}$) and using the smoothness of
$X_{1,2}$ in $s_{1},\ldots,s_{m_{0}}$, it is easy to obtain ${\boldsymbol{D}%
}_{s_{1}}\ldots{\boldsymbol{D}}_{s_{m_{0}}}(\text{tr}(D\tilde{X}_{1}%
)(D\tilde{X}_{2}))\in W^{k-1,q}$ and thus
\[
\nabla\cdot\tilde{Z}\in W^{k-1,q}.
\]
Finally, using
\[
{\boldsymbol{D}}_{s_{i}}N=\Pi(\tau_{i}),\quad{\boldsymbol{D}}_{s_{j}%
}{\boldsymbol{D}}_{s_{i}}N=(\nabla_{\tau_{j}}\Pi)(\tau_{i})+\Pi(\phi
_{s_{j}s_{i}}\circ\phi^{-1}),\quad\ldots
\]
the induction assumption, and the assumption $N\cdot\nabla\gamma=0$ on
$\partial\Omega$ in the definition of $\mathcal{P}$ it is straight forward to
obtain
\[
\tilde{Z}\cdot N=N\cdot{\boldsymbol{D}}_{s_{1}}\ldots{\boldsymbol{D}%
}_{s_{m_{0}}}\nabla\gamma\in W^{k-\frac{1}{q},q}(\partial\Omega,\mathbb{R})
\]
in the same fashion. Therefore, the standard estimates in elliptic theory
implies that (ML) holds for $m=m_{0}$. Moreover, inequality \eqref{E:CP1}
follows from the observation that the composition by $\phi$ or $\phi^{-1}$
only produces terms like $|\phi|_{W_{k,q}}^{l}$ in the estimates of the
$W^{k,q}$ norms and thus the proof of the proposition is complete.
\end{proof}

Proposition \ref{P:CP} provides the key element for us to prove that
\eqref{E:EulerL} is a smooth second order ODE on the infinite dimensional
configuration manifold $\mathcal{G}$.

\begin{proposition}
\label{P:ODE} For any $u_{0} \in\mathcal{G}$ and $w_{0} \in T_{u_{0}}
\mathcal{G}$, the initial value problem of the Euler equation \eqref{E:EulerL}
has a unique solution $\big(u(t), u_{t}(t)\big) \in T\mathcal{G}$, locally in
time, depending on $(t, u_{0}, w_{0})$ smoothly.
\end{proposition}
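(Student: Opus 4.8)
The plan is to read the Euler equation \eqref{E:EulerL} as a classical second order ODE in a single fixed coordinate chart on $\mathcal{G}$, with a $C^\infty$ right hand side on the Banach model space $W_{Euler}^{k,q}$, and then invoke the standard Picard--Lindel\"{o}f theorem (with smooth dependence on data) for ODEs in Banach spaces. Fix the initial point $u_0\in\mathcal{G}$. Composing the map $\Psi$ of Proposition \ref{P:coordCG} with the right translation by $u_0$, the map $w\in B_{\delta_0}(W_{Euler}^{k,q})\mapsto\Psi(w)\circ u_0$ parametrizes a neighborhood of $u_0$ in $\mathcal{G}$, and $(w,\xi)\in B_{\delta_0}(W_{Euler}^{k,q})\times W_{Euler}^{k,q}$, mapped to $\big(\Psi(w)\circ u_0,\,(\mathcal{D}\Psi(w)[\xi])\circ u_0\big)$, gives the induced chart of $T\mathcal{G}$ near $T_{u_0}\mathcal{G}$. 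By Proposition \ref{P:translation} (right translation by the fixed $u_0^{-1}$ is $C^\infty$) together with the inversion formula below, nearby data in $T\mathcal{G}$ have chart coordinates depending smoothly on them, so it suffices to treat the ODE in these coordinates.

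Writing a solution as $u(t)=\Psi(w(t))\circ u_0$, differentiating twice in $t$, substituting into \eqref{E:EulerL}, and using the right translation invariance \eqref{E:RT} to cancel the (diffeomorphism) factor $u_0$, equation \eqref{E:EulerL} becomes
\[
w_t=\xi,\qquad \mathcal{D}\Psi(w)[\xi_t]=-\mathcal{D}^2\Psi(w)[\xi,\xi]-\mathcal{P}\big(\Psi(w),\mathcal{D}\Psi(w)[\xi],\mathcal{D}\Psi(w)[\xi]\big).
\]
Two structural facts turn this into an explicit ODE. First, the right hand side of the second equation lies in $T_{\Psi(w)}\mathcal{G}$: this is the geometric content of \eqref{E:Hodge2}--\eqref{E:Hodge3}, that $\mathcal{P}(\phi,\cdot,\cdot)$ is, up to sign, the second fundamental form of $\mathcal{G}$ at $\phi$, so that \eqref{E:EulerL} in these coordinates is the coordinate form of a geodesic-type equation whose acceleration term is automatically tangent; it can be checked by pulling \eqref{E:Hodge3} back by $\Psi(w)^{-1}$ and comparing with $\mathcal{D}^2\Psi(w)[\xi,\xi]$, and this is the point at which the absence of derivative loss in $\mathcal{P}$ (Proposition \ref{P:CP}) is used. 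Second, since $\Psi(w)-w\in(W_{Euler}^{k,q})^{\perp}$ for every $w$, the Hodge projection $P=Q(id,\cdot):W^{k,q}(\Omega,\mathbb{R}^n)\to W_{Euler}^{k,q}$ of \eqref{E:Helmholtz1} satisfies $P\circ\mathcal{D}\Psi(w)=I$ on $W_{Euler}^{k,q}$; hence $\mathcal{D}\Psi(w)$ is a Banach isomorphism onto $T_{\Psi(w)}\mathcal{G}$ whose inverse is the restriction of $P$ to $T_{\Psi(w)}\mathcal{G}$. Combining the two, the system becomes $w_t=\xi$, $\xi_t=F(w,\xi)$ with
\[
F(w,\xi):=-P\Big[\mathcal{D}^2\Psi(w)[\xi,\xi]+\mathcal{P}\big(\Psi(w),\mathcal{D}\Psi(w)[\xi],\mathcal{D}\Psi(w)[\xi]\big)\Big].
\]

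The map $F$ is $C^\infty$ on $B_{\delta_0}(W_{Euler}^{k,q})\times W_{Euler}^{k,q}$, since $\Psi\in C^\infty$ (Proposition \ref{P:coordCG}), $\mathcal{P}\in C^\infty$ (Proposition \ref{P:CP}), $P$ is bounded linear, and compositions of smooth maps between Banach spaces are smooth. Thus the first order system $(w,\xi)_t=(\xi,F(w,\xi))$ has a $C^\infty$ (in particular locally Lipschitz) right hand side, so the Picard--Lindel\"{o}f theorem in Banach spaces gives, for each initial datum, a unique local-in-time solution whose local flow is $C^\infty$ jointly in the time variable and the initial datum. Transporting the solution back through the chart yields the unique local solution $\big(u(t),u_t(t)\big)\in T\mathcal{G}$ of \eqref{E:EulerL} with $u(0)=u_0$ and $u_t(0)=w_0$, depending smoothly on $(t,u_0,w_0)$; uniqueness among all $\mathcal{G}$-valued solutions follows because any such solution stays in the chart for short time and there solves the same ODE.

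I expect the only genuinely non-routine step to be the first structural fact, namely that the right hand side of the chart equation really lies in the model space $W_{Euler}^{k,q}$, equivalently that \eqref{E:EulerL} is an honest ODE on $\mathcal{G}$ rather than a differential-algebraic equation. This is exactly what the Hodge structure \eqref{E:Hodge2}--\eqref{E:Hodge3} and the loss-free smoothness of $\mathcal{P}$ (Proposition \ref{P:CP}) were prepared to provide; once they are in hand, the remainder is the standard ODE-in-Banach-space machinery.
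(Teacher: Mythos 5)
Your proof is correct, but it follows a genuinely different route from the paper's. The paper does not work in a coordinate chart at all: it first builds a smooth retraction $\Phi:id+B_\delta\big(W^{k,q}(\Omega,\mathbb{R}^n)\big)\to\mathcal{G}$ from the \emph{ambient} linear space onto the manifold (taking the $\Psi(w)$ component of the decomposition $\Psi(w)+\nabla h$), writes down the \emph{modified} equation $u_{tt}+\mathcal{P}\big(\Phi(u),u_t,\mathcal{D}\Phi(u)u_t\big)=0$ on an open ball of the flat space $\big(W^{k,q}(\Omega,\mathbb{R}^n)\big)^2$ where Picard--Lindel\"of applies directly, and then proves that $T\mathcal{G}$ is an invariant set: setting $v=u_t\circ\Phi(u)^{-1}$, the identity \eqref{E:Hodge3} forces $v_t\in W_{Euler}^{k,q}$, hence $u_t\in T_{\Phi(u)}\mathcal{G}$ and $\partial_t\big(u-\Phi(u)\big)=0$, so $u=\Phi(u)\in\mathcal{G}$. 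You instead stay on the manifold and pull the equation into the chart $w\mapsto\Psi(w)\circ u_0$, using \eqref{E:Hodge3} once more (now to certify that $\mathcal{D}^2\Psi(w)[\xi,\xi]+\mathcal{P}\big(\Psi(w),\mathcal{D}\Psi(w)\xi,\mathcal{D}\Psi(w)\xi\big)$ lies in $T_{\Psi(w)}\mathcal{G}$, the same computation the paper carries out in Subsection~\ref{SS:non-auto} when defining $B(t,w)$) together with the observation that $P\circ\mathcal{D}\Psi(w)=I$ to produce an explicit smooth vector field $F(w,\xi)$ on $B_{\delta_0}(W_{Euler}^{k,q})\times W_{Euler}^{k,q}$. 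Both arguments rest on the same two pillars, the loss-free smoothness of $\mathcal{P}$ (Proposition~\ref{P:CP}) and the tangency identity \eqref{E:Hodge2}--\eqref{E:Hodge3}; the paper's extend-then-restrict strategy avoids inverting $\mathcal{D}\Psi(w)$ and keeps the nonlinearity in a single ambient formula, which it then reuses verbatim in Subsection~\ref{SS:non-auto}, while your chart formulation is more directly a ``smooth ODE on a Banach manifold'' statement and makes the tangency of the acceleration the visible crux. One small caveat: the tangency claim you label as the ``first structural fact'' is asserted a bit quickly. The curve $s\mapsto Y(s)=\big(\mathcal{D}\Psi(w+s\xi)\xi\big)\circ\Psi(w+s\xi)^{-1}$ is $W_{Euler}^{k,q}$-valued but is only manifestly $C^1$ in $s$ with values in $W^{k-1,q}$ (composition by $\Psi(w+s\xi)^{-1}$ costs one derivative), so the argument really delivers that the quantity in question satisfies the divergence-free and boundary conditions in the lower topology and, combined with its membership in $W^{k,q}$ from Proposition~\ref{P:CP}, lies in $W_{Euler}^{k,q}$; this is also the implicit regularity bookkeeping in the paper's own derivation and is worth spelling out.
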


\begin{proof}
Due to the right translation of \eqref{E:EulerL} given in \eqref{E:symm}, we
may assume $u_{0}$ belongs to a small neighborhood of $id$. From Proposition
\ref{P:coordCG},
\[
(w, \nabla h) \to\Psi(w) + \nabla h
\]
is a local diffeomorphism from $W_{Euler}^{k, q} \times(W_{Euler}^{k,
q})^{\perp}$ to $W^{k, q} (\Omega, \mathbb{R}^{n})$. Taking the $\Psi(w)$
component, we obtain a smooth $\Phi: id + B_{\delta}\big(W^{k, q} (\Omega,
\mathbb{R}^{n}) \big) \to\mathcal{G}$ such that
\begin{equation}
\label{E:Phi}u- \Phi(u) \in(W_{Euler}^{k, q})^{\perp}, \; \forall u \in id +
B_{\delta}\big(W^{k, q} (\Omega, \mathbb{R}^{n}) \big)
\end{equation}
and
\[
\Phi(u) =u, \; \forall\, u \in\mathcal{G} \cap\Big(id +B_{\delta}\big(W^{k, q}
(\Omega, \mathbb{R}^{n}) \big)\Big).
\]
Consider a modification of \eqref{E:EulerL}
\begin{equation}
\label{E:MEulerL}u_{tt} + \mathcal{P}\big( \Phi(u), u_{t}, \mathcal{D} \Phi(u)
u_{t}\big) =0, \quad u\in B_{\delta}\big(W^{k, q} (\Omega, \mathbb{R}%
^{n})\big)\; u_{t} \in B_{\delta}\big(W^{k, q} (\Omega, \mathbb{R}^{n})\big).
\end{equation}
From the smoothness of $\Phi$ and $\mathcal{P}$, equation \eqref{E:MEulerL} is
a smooth ODE defined on an open subset of an infinite dimensional Banach space
$\big(W^{k, q} (\Omega, \mathbb{R}^{n})\big)^{2}$ and thus is locally
well-posed with smooth dependence on the initial value. Moreover, any solution
$u(t)$ of \eqref{E:MEulerL} satisfying $u(t) \in\mathcal{G}$ for all $t$ also
solves \eqref{E:EulerL}. Therefore, to complete the proof, we only need to
show that, if the initial data is given on $T\mathcal{G}$, then the solution
of \eqref{E:MEulerL} also stays on $T\mathcal{G}$, i. e. $u(t) \in\mathcal{G}$
and $u_{t} (t) \in T_{u(t)} \mathcal{G}$. Let $v = u_{t} \circ\big(\Phi
(u)\big)^{-1}$. Equation \eqref{E:MEulerL} yields
\[
v_{t} \circ\Phi(u) + \big((D v) \circ\Phi(u)\big) \big( \mathcal{D} \Phi(u)
u_{t}\big) = u_{tt} = - \mathcal{P}\big( \Phi(u), u_{t}, \mathcal{D} \Phi(u)
u_{t}\big)
\]
and thus \eqref{E:Hodge3} implies $v_{t} \circ\Phi(u) \in T_{\Phi(u)}
\mathcal{G}$ or equivalently $v_{t} \in T_{id} \mathcal{G} = W_{Euler}^{k, q}%
$. Therefore $v(t) \in W_{Euler}^{k, q}$, and thus $u_{t} = v \circ\Phi(u) \in
T_{\Phi(u)} \mathcal{G}$, for all $t$ follows from the initial assumption.
Consequently
\[
\big( u - \Phi(u) \big)_{t} = \big(I-\mathcal{D}\Phi(u)\big) u_{t} = 0
\]
where we also used that \eqref{E:Phi} implies $\mathcal{D} \Phi(u) X = X$ for
any $X \in T_{\Phi(u)} \mathcal{G}$. Therefore $u= \Phi(u) \in\mathcal{G}$ and
the proof completes.
\end{proof}

According to \eqref{E:symm}, the second order ODE \eqref{E:EulerL} defined on
the Lie group $\mathcal{G}$ is invariant under the right translation. The
standard procedure of taking $v=u_{t}\circ u^{-1}$ reduces it to a first order
equation on the corresponding Lie algebra $W_{Euler}^{k,q}=T_{id}\mathcal{G}$,
which turns out to be the usual form (E) of the Euler equation in the Eulerian
coordinates. However, according to Proposition \ref{P:translation}, the
composition as the multiplication operation on the group $\mathcal{G}$ is not
smooth, this procedure induces the loss of one order of spatial derivative and
make (E) into a PDE, i. e. the right side of (E) does not define a smooth
vector field on $W_{Euler}^{k,q}$.

\subsection{Euler equation near $v_{0}$ as a non-autonomous ODE on
$T\mathcal{G}$}

\label{SS:non-auto}

Even though Euler equation is equivalent to an infinite dimensional ODE on
$T\mathcal{G}$, the Lagrangian frame work also brings two complications:

\begin{itemize}
\item $\mathcal{G}$ is not flat, which means that we may have to carry out the
analysis in local coordinates on $\mathcal{G}$ and

\item the steady velocity field $v_{0}(x)$ of the Euler equation (E)
corresponds to a dynamic solution $\Big(u_{0}(t, y), u_{0t} (t, y) =
v_{0}\big( u(t, y)\big)\Big)$ of \eqref{E:EulerL}.
\end{itemize}

It is natural to look for ways to take the advantage of the group structure of
$\mathcal{G}$ to reduce \eqref{E:EulerL}, localized near $u_{0}(t)$, to a
non-autonomous second order ODE defined in local coordinate neighborhood of
$id$. Based on the comments at the end of Subsection \ref{SS:ODE}, taking $w=u
\circ u_{0}^{-1}$ for $u(t)$ near $u_{0}(t)$ would result in loss of
regularity, which can also be seen explicitly through the simple calculation
$u_{t} = w_{t} \circ u_{0} + (Dw \circ u_{0}) (v_{0} \circ u_{0})$ leading to
$w_{t} \notin W^{k, q}$ due to the presence of $Dw$. Instead, for any solution
$\big(u(t), u_{t}(t) = v(t) \circ u(t)\big) \in T \mathcal{G}$ of the Euler
equation \eqref{E:EulerL} with $u(t)$ close to $u_{0}(t)$, let
\begin{equation}
\label{E:coord1}u = \Phi(t, w) \triangleq u_{0} (t) \circ\Psi\big(w(t)\big),
\quad w(t) \in B_{\delta_{0}}(W_{Euler}^{k, q}),
\end{equation}
where $\Psi$ is given in Proposition \ref{P:coordCG} which also implies
$\Phi(t, \cdot)$, for any $t \in\mathbb{R}$, is a local diffeomorphism from
$W_{Euler}^{k, q}$ to $\mathcal{G}$. Therefore, the linearizations
\begin{align*}
&  \tilde X = \mathcal{D} \Phi(t,w)X= \big(D u_{0} \circ\Psi
(w)\big) \mathcal{D} \Psi(w) X, \quad &  &  X \in W_{Euler}^{k, q}\\
&  X = \big(\mathcal{D} \Phi(t, w)\big)^{-1} \tilde X = \big(\mathcal{D}
\Psi(w)\big)^{-1} \big((D u_{0}) ^{-1} \circ\Psi(w) \big) \tilde X, \quad &
&  \tilde X \in T_{u} \mathcal{G}%
\end{align*}
are isomorphisms from between $W_{Euler}^{k, q}$ and $T_{u} \mathcal{G} $.
Substitute \eqref{E:coord1} in to \eqref{E:EulerL}, one may compute
\begin{align}
&  u_{t} = \Phi_{t}(t, w) + \mathcal{D} \Phi(t, w) w_{t} = u_{0t} \circ\Psi(w)
+ (D u_{0} \circ\Psi(w)) \mathcal{D} \Psi(w) w_{t}\label{E:coord2}\\
&  v = u_{t} \circ u^{-1} = v_{0} + \left(  (D u_{0}) \circ u_{0}^{-1}\right)
\left(  \left(  \mathcal{D} \Psi(w) w_{t}\right)  \circ\Psi(w)^{-1} \circ
u_{0}^{-1}\right)  \label{E:coord3}%
\end{align}
and
\[%
\begin{split}
u_{tt} =  &  \Phi_{tt} (t, w) + 2 \mathcal{D} \Phi_{t} (t, w) w_{t} +
\mathcal{D} \Phi(t, w) w_{tt} + \mathcal{D}^{2} \Phi(t, w) (w_{t}, w_{t})\\
=  &  u_{0tt} \circ\Psi(w) + 2 \left(  D u_{0t} \circ\Psi(w)\right)
\mathcal{D} \Psi(w) w_{t} + \left(  D^{2} u_{0} \circ\Psi(w)\right)
\big(\mathcal{D} \Psi(w) w_{t}, \mathcal{D} \Psi(w) w_{t}\big)\\
&  + \left(  D u_{0} \circ\Psi(w)\right)  \left(  \mathcal{D}^{2} \Psi(w)
(w_{t}, w_{t}) + \mathcal{D} \Psi(w) w_{tt}\right)  .
\end{split}
\]
Therefore, for $u(t)$ close to $u_{0}(t)$, the Euler equation \eqref{E:EulerL}
is rewritten as
\begin{equation}
\label{E:EulerA}w_{tt} + \mathcal{F}(t, w, w_{t}) =0, \quad w\in B_{\delta
_{0}} (W_{Euler}^{k, q}), \;w_{t} \in W_{Euler}^{k, q},
\end{equation}
where, for $w \in B_{\delta_{0}} (W_{Euler}^{k, q})$ and $X \in W_{Euler}^{k,
q}$, the term $\mathcal{F} (t, w, X)$ is arranged into the linear and
quadratic parts in $X$
\begin{equation}
\label{E:CF}\mathcal{F} (t, w, X) = A(t, w) X + B(t, w) (X, X) \in
W_{Euler}^{k, q}%
\end{equation}
with the linear and bilinear (in $X$) operators $A(t, w)$ and $B(t, w)$ are
given by
\begin{align}
A(t, w) X =  &  2 \big(\mathcal{D} \Phi(t, w) \big)^{-1} \big(\mathcal{D}
\Phi_{t} (t, w) X + \mathcal{P} (u, v_{0} \circ u, \tilde X) \big)\nonumber\\
=  &  2 \big(\mathcal{D} \Psi(w)\big)^{-1} \big((D u_{0}) ^{-1} \circ\Psi(w)
\big) \big( (D v_{0} \circ u) \tilde X + \mathcal{P} (u, v_{0} \circ u, \tilde
X) \big) \label{E:A1}%
\end{align}
\begin{align}
B(t, w)(X, X) =  &  \big(\mathcal{D} \Phi(t, w) \big)^{-1} \big(\mathcal{D}%
^{2} \Phi(t, w) (X, X) + \mathcal{P}(u, \tilde X, \tilde X) \big)\nonumber\\
=  &  \big(\mathcal{D} \Psi(w)\big)^{-1} \big( (D u_{0})^{-1} \circ
\Psi(w)\big) \Big(\big(D u_{0} \circ\Psi(w)\big)\mathcal{D}^{2} \Psi(w) (X,
X)\nonumber\\
&  \qquad\quad+ \big(D^{2} u_{0} \circ\Psi(w)\big) \big(\mathcal{D} \Psi(w) X,
\mathcal{D} \Psi(w) X\big) + \mathcal{P} (u, \tilde X, \tilde X) \Big),
\label{E:B}%
\end{align}
where
\[
u = \Phi(t, w) =u_{0} \circ\Psi(w) \qquad\tilde X = \mathcal{D} \Phi(t, w) X=
\big(D u_{0} \circ\Psi(w)\big) \mathcal{D} \Psi(w) X.
\]
In the above calculation, the invariance of $\mathcal{P}$ under the right
translation \eqref{E:RT} and equation \eqref{E:EulerL} were used in handling
both $u_{tt}$ and $u_{0tt}$. Here even though the linear operator
$\big(\mathcal{D} \Phi(t,w) \big)^{-1}$ acts only on the subspace
$T_{u}\mathcal{G}$, the terms $A(t, w)X$ and $B(t, w)(X, X)$ are well-defined
and thus $\mathcal{F} (t,w, X) \in W_{Euler}^{k, q}$. In fact,
\eqref{E:Hodge3} implies
\[
\mathcal{D} \Phi_{t} (t, w) X + \mathcal{P} (u, v_{0} \circ u, \tilde X)=(D
v_{0} \circ u) \tilde X + \mathcal{P} (u, v_{0} \circ u, \tilde X) \in T_{u}
\mathcal{G}%
\]
and thus $A(t, w)$ is well-defined. To see that $B(t, w)$ is well-defined, we
note the second linearization of $\Phi$ along a line $w + s X$, $s
\in\mathbb{R}$, at $s=0$, is given by
\[
u_{ss}= \mathcal{D}^{2} \Phi(t, w) (X, X) = \big(D u_{0} \circ\Psi
(w)\big)\mathcal{D}^{2} \Psi(w) (X, X) + \big(D^{2} u_{0} \circ\Psi
(w)\big) \big(\mathcal{D} \Psi(w) X, \mathcal{D} \Psi(w) X\big).
\]
Let
\[
Y = u_{s} \circ u^{-1} = \tilde X \circ u^{-1} \in W_{Euler}^{k, q},
\]
then
\[
u_{ss} = Y_{s} \circ u + DY (\tilde X).
\]
Since $Y_{s} \in W_{Euler}^{k, q}$ and \eqref{E:Hodge3} implies
\[
D Y(\tilde X) + \mathcal{P}(u, \tilde X, \tilde X) \in T_{u} \mathcal{G},
\]
we obtain
\[
\mathcal{D}^{2} \Phi(t, w) (X, X) + + \mathcal{P}(u, \tilde X, \tilde X) =
u_{ss} + \mathcal{P}(u, \tilde u_{s}, \tilde u_{s}) \in T_{u} \mathcal{G}%
\]
and thus
\[
B(t, w)(X, X) = \big(\mathcal{D} \Phi(t, w) \big)^{-1} \big(\mathcal{D}^{2}
\Phi(t, w) (X, X) + \mathcal{P}(u, \tilde X, \tilde X) \big) \in W_{Euler}^{k,
q}%
\]
is well-defined.

\begin{remark}
An alternative way to rewrite the Euler's equation to derive the above form is
to follow the Lagrangian variational principle. One may first express the
action $\int\int_{\Omega}\frac{|u_{t}|^{2}}2 dy dt$, defined on $T\mathcal{G}%
$, using \eqref{E:coord1} and \eqref{E:coord2}. The Euler's equation in terms
of $w$ follows from the the variation of the action.
\end{remark}

Recall we assumed in (A1) that $v_{0} \in W^{k+r, q}$ with $r\ge4$ and $\mu>
\mu_{0} \ge0$ is a constant fixed before \eqref{E:LagM0}.

\begin{lemma}
\label{L:CF} The nonlinear mapping $\mathcal{F}$ satisfies
\[
\mathcal{F} \in C^{r-4} \big(\mathbb{R} \times B_{\delta_{0}} (W_{Euler}^{k,
q}) \times W_{Euler}^{k, q}, W_{Euler}^{k, q}\big), \quad\mathcal{F}(t, w, 0)
\equiv0.
\]
Moreover, there exist $K>0$ depending only on $r$ and $k$ and $C>0$ depending
only on $r, n, k, q, v_{0}$, such that, for $t \in\mathbb{R}$, $w, X \in
W_{Euler}^{k, q}$ and $|w|_{W^{k, q}} < \delta_{0}$, we have
\begin{align*}
&  |A(t, \cdot)|_{C^{r-2} \big(B_{\delta_{0}} (W_{Euler}^{k, q}),
L(W_{Euler}^{k, q})\big)} \le C e^{K \mu|t|},\\
&  |B(t, \cdot)|_{C^{r-2} \big(B_{\delta_{0}} (W_{Euler}^{k, q}),
L(W_{Euler}^{k, q} \otimes W_{Euler}^{k, q}, W_{Euler}^{k, q})\big)} \le C
e^{K \mu|t|}.
\end{align*}
Here the highest order derivative is in the G\^ateaux sense which is
sufficient to yield the $C^{r-3,1}$ bounds.
\end{lemma}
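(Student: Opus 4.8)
The identity $\mathcal{F}(t,w,0)=0$ is immediate from \eqref{E:CF}, since $A(t,w)X$ and $B(t,w)(X,X)$ both vanish at $X=0$; it merely records that $w\equiv 0$ solves \eqref{E:EulerA}, i.e.\ $u\equiv u_{0}(t)$ solves \eqref{E:EulerL}. As $\mathcal{F}$ is affine-plus-quadratic in $X$ with $X$-coefficients $A(t,w)$ and $B(t,w)$, and the excerpt has already shown these two operators to be well defined with values in $W_{Euler}^{k,q}$, the whole statement reduces to the regularity in $w$ and the exponential-in-$t$ bounds on $A$ and $B$. The plan is to read the explicit formulas \eqref{E:A1}--\eqref{E:B} as finite sums of products of three kinds of elementary factors: \emph{(a) chart factors} $\Psi(w)$, $\mathcal{D}\Psi(w)$, $\mathcal{D}^{2}\Psi(w)$, $(\mathcal{D}\Psi(w))^{-1}$, which by Proposition \ref{P:coordCG} and smoothness of operator inversion are $C^{\infty}$ in $w$ on $B_{\delta_{0}}(W_{Euler}^{k,q})$ with bounds independent of $t$; \emph{(b) frozen-composition factors} $G(t,\cdot)\circ\Psi(w)$ with $G(t,\cdot)$ one of $Du_{0}$, $(Du_{0})^{-1}$, $D^{2}u_{0}$, $v_{0}\circ u_{0}$, $D(v_{0}\circ u_{0})$ (all at time $t$); and \emph{(c) pressure factors} $\mathcal{P}\big(u_{0}(t,\cdot)\circ\Psi(w),\cdot,\cdot\big)$, which by Proposition \ref{P:CP} are $C^{\infty}$ in all arguments \emph{with no loss of derivative} --- it is exactly this last fact, already secured in Proposition \ref{P:CP}, that lets the scheme run.

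The one genuinely new ingredient is a composition lemma (an ``$\Omega$-lemma'') in the $W^{k,q}$ scale: if $G\in W^{k+l,q}(\Omega)$, then $w\mapsto G\circ\Psi(w)$ is of class $C^{l}$ from $B_{\delta_{0}}(W_{Euler}^{k,q})$ into $W^{k,q}(\Omega)$, its G\^{a}teaux derivatives up to order $l$ are given by the chain rule (involving $D^{j}G\circ\Psi(w)$, $j\le l$, and the chart derivatives), and its $C^{l}$-norm is bounded by a constant --- depending only on $l$ and on the $t$-independent, $C^{\infty}$ chart map $\Psi$ --- times $|G|_{W^{k+l,q}}$. This is standard (cf.~\cite{em70}), and its proof uses only that $W^{k,q}$ is a Banach algebra closed under composition by $W^{k,q}$-diffeomorphisms of $\Omega$, valid since $k-1>\frac{n}{q}$; I would include it, but it is routine. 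Granting it: by (A1) $v_{0}\in W^{k+r,q}$, and by \eqref{E:LagM0} --- whose $C^{k+r}$ estimate controls the $W^{k+r,q}$ norm on the bounded domain $\Omega$ --- we get $u_{0}(t,\cdot),\,(u_{0}(t,\cdot))^{-1}\in W^{k+r,q}$ with $W^{k+r,q}$ norms $\le Ce^{K\mu|t|}$; since $\det Du_{0}\equiv 1$ the matrix inverse $(Du_{0})^{-1}=\mathrm{adj}(Du_{0})$ is polynomial in $Du_{0}$, hence in $W^{k+r-1,q}$, and $v_{0}\circ u_{0}\in W^{k+r,q}$ by the composition estimate, all with $e^{K\mu|t|}$ bounds. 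Thus every $G(t,\cdot)$ in (b) lies in $W^{k+r-2,q}$ with norm $\le Ce^{K\mu|t|}$, so each frozen-composition factor is $C^{r-2}$ in $w$ on $B_{\delta_{0}}$ with $C^{r-2}$-norm $\le Ce^{K\mu|t|}$.

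To assemble, I would differentiate a product of factors of types (a)--(c) in $w$ by the Leibniz rule and use the Banach-algebra property of $W^{k,q}$, so the norm of a product is controlled by the product of the norms; for the pressure factors, the bound $|(\mathcal{D}_{\phi})^{m}\mathcal{P}(\phi,\cdot,\cdot)|\le C_{0}|D\phi|_{W^{k-1,q}}^{K}$ of Proposition \ref{P:CP}, evaluated along $\phi=u_{0}(t,\cdot)\circ\Psi(w)$ with $|D\phi|_{W^{k-1,q}}\le C(|\Psi(w)|_{W^{k,q}})\,|u_{0}(t,\cdot)|_{W^{k,q}}\le Ce^{K\mu|t|}$, supplies the needed control. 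Reading \eqref{E:A1}--\eqref{E:B} term by term then yields $|A(t,\cdot)|_{C^{r-2}}+|B(t,\cdot)|_{C^{r-2}}\le Ce^{K\mu|t|}$ with $K=K(r,k)$, $C=C(r,n,k,q,v_{0})$. The G\^{a}teaux derivatives so produced are jointly continuous in $(t,w)$ --- because $t\mapsto u_{0}(t,\cdot)$ and its spatial derivatives are continuous in the relevant Sobolev norms by standard ODE dependence --- which upgrades G\^{a}teaux-$C^{r-2}$ to Fr\'{e}chet-$C^{r-3}$ with Lipschitz $(r-3)$-rd derivative, exactly as in the proof of Proposition \ref{P:CP}; this is the $C^{r-3,1}$ bound. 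Finally, $t\mapsto u_{0}(t,\cdot)$ is $C^{\infty}$ into Sobolev spaces whose order drops by at most one per time derivative (from $\partial_{t}u_{0}=v_{0}\circ u_{0}$, iterated, using $u_{0tt}=-\mathcal{P}(u_{0},u_{0t},u_{0t})$ from \eqref{E:EulerL} to keep terms in scale), which leaves a margin that is more than enough for $\mathcal{F}\in C^{r-4}\big(\mathbb{R}\times B_{\delta_{0}}(W_{Euler}^{k,q})\times W_{Euler}^{k,q},\,W_{Euler}^{k,q}\big)$.

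The hard part is the $\Omega$-lemma and the bookkeeping it feeds: one must pin down, in the $W^{k,q}$ scale, that each $w$-derivative of $G\circ\Psi(w)$ costs exactly one spare spatial derivative of $G$, and that all constants depend polynomially on the (bounded) chart data and on $|u_{0}(t,\cdot)|_{C^{k+r}}$, so that the powers arising in the Leibniz expansion only modify the constant $K$ in $e^{K\mu|t|}$ --- which in the end must depend on $r,k$ alone. The remaining, purely organizational difficulty (the many terms in \eqref{E:A1}--\eqref{E:B}) is tame precisely because Proposition \ref{P:CP} has already removed the pressure's derivative loss, and the G\^{a}teaux-versus-Fr\'{e}chet point at top order is handled verbatim as in that proposition.
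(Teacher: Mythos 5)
Your proposal is correct and follows essentially the same route as the paper's proof, which is little more than a pointer to the explicit formulas \eqref{E:CF}--\eqref{E:B}, Propositions \ref{P:coordCG}--\ref{P:CP}, and estimate \eqref{E:CP1}, calling the verification ``tedious but straightforward.'' You have simply made the ``straightforward'' part explicit: the factor decomposition, the $\Omega$-lemma for $w\mapsto G\circ\Psi(w)$, the Leibniz/Banach-algebra bookkeeping with the accumulated $e^{K\mu|t|}$ powers, and the G\^ateaux-to-$C^{r-3,1}$ upgrade, which are exactly the ingredients the paper invokes implicitly.
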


\begin{proof}
The smoothness of $\mathcal{F}$ follows directly from its expression
\eqref{E:CF} -- \eqref{E:B} and Propositions \ref{P:coordCG} -- \ref{P:ODE}.
The property $\mathcal{F}(t, w, 0) \equiv0$ follows directly from \eqref{E:CF}
which is actually a consequence of the right translation invariance of the
Euler equation. To demonstrate the latter, we notice that $(u_{0} \circ\phi,
u_{0t} \circ\phi)$ is a solution of \eqref{E:EulerL} for any $\phi
\in\mathcal{G}$. Then \eqref{E:coord1} implies that, for any $w \in
B_{\delta_{0}}(W_{Euler}^{k, q})$, $(w, 0)$ is a time independent solution of
\eqref{E:EulerA} and thus $\mathcal{F}(t, w, 0)=0$. The derivation of the
estimates is tedious, but straightforwardly from Proposition \ref{P:coordCG}
-- \ref{P:CP}, \eqref{E:CP1}, and \eqref{E:CF} -- \eqref{E:B}.

\end{proof}

\begin{remark}
As proved in Proposition \ref{P:CP}, \eqref{E:EulerL} is a smooth infinite
dimensional ODE. However, the local coordinate systems based on the
composition would always cause loss of derivatives due to Proposition
\ref{P:translation}. Here our local coordinate mapping $\Phi(t, \cdot)$ allows
us to obtain some limited smoothness due to assumption (A1) of the extra
regularity of $v_{0}$.
\end{remark}

\subsection{Linear exponential dichotomy in Lagrangian coordinates.}

\label{SS:exp-di}

Since $\mathcal{F}(t, w, 0) =0$ for any small $w \in T_{id}\mathcal{G}$, we
have $\mathcal{D}_{w} F(t, w, 0)=0$. We can rewrite \eqref{E:EulerA} as
\begin{equation}
\label{E:EulerB}z_{t} = A_{0} (t) z + F(t, z), \qquad z= (z_{1}, z_{2})^{T}
\in B_{\delta_{0}} (W_{Euler}^{k, q}) \times W_{Euler}^{k, q}%
\end{equation}
where
\[
A_{0}(t) =
\begin{pmatrix}
0 & I\\
0 & -A(t, 0)
\end{pmatrix}
, \qquad F(t, z) =
\begin{pmatrix}
0\\
A(t,0) z_{2} -\mathcal{F}(t, z_{1}, z_{2})
\end{pmatrix}
.
\]
From \eqref{E:A1}, the explicit form of $A(t, 0)$ is given by
\[
A(t, 0) X = 2 \big(Du_{0}(t)\big)^{-1} \Big( \big(D v_{0} \circ u_{0}%
(t)\big) Du_{0}(t) X + \mathcal{P} \big(u_{0}, v_{0} \circ u_{0}(t), Du_{0}(t)
X\big) \Big)
\]
and Lemma \ref{L:CF} implies that there exists $C>0$ independent of
$\delta_{0} $ such that
\begin{equation}
\label{E:F0}F(t, 0) =0= \mathcal{D}_{z} F(t, 0), \quad|\mathcal{D}_{z}^{2} F
(t, \cdot)|_{C^{r-4} \big(B_{\delta_{0}} (W_{Euler}^{k, q})^{2}, W_{Euler}^{k,
q}\big))} \le C e^{K \mu|t|},
\end{equation}
where again the highest order derivative is in the G\^ateaux sense which is
sufficient to yield the $C^{r-3,1}$ bounds. The linearization of
\eqref{E:EulerA} takes the form of
\begin{equation}
\label{E:LEulerA}w_{tt} + \mathcal{D}_{X} \mathcal{F} (t, 0, 0) w_{t}= 0
\Leftrightarrow z_{t} = A_{0}(t) z
\end{equation}
whose well-posedness is guaranteed by Lemma \ref{L:CF}. Let $T(t, t_{0})$ be
the solution operator of \eqref{E:LEulerA} with initial time $t_{0}$ and
terminal time $t$.

On the one hand, for $w\in W_{Euler}^{k,q}$, $z=(w,0)^{T}$ is a solution of
\eqref{E:LEulerA}. On the other hand, linearizing \eqref{E:coord3} at the
steady solution $z=(w_{0},0)^{T}$ one may compute $z=(w(t),w_{t}(t))^{T}$ is a
solution of the linearization of \eqref{E:EulerB} at $z=(w_{0},0)^{T}$, where
\[
w_{t}(t)=\big(\mathcal{D}\Psi(w_{0})\big)^{-1}\big((Du_{0}(t))^{-1}\circ
\Psi(w_{0})\big)\big(v(t)\circ u_{0}(t)\circ\Psi(w_{0})\big)
\]
and $v=v(t)$ is a solution of \eqref{E:LEuler}. In particular, taking
$w_{0}=0$, we have that $z=(w(t),w_{t}(t))^{T}$ is a solution solution of
\eqref{E:LEulerA} where
\begin{equation}
w_{t}(t)=\big(Du_{0}(t)\big)^{-1}\big(v(t)\circ u_{0}(t)\big).
\label{formula-w-t}%
\end{equation}

This correspondence between the linearized solutions of \eqref{E:LEuler} and
\eqref{E:LEulerA} and assumption (A2) would yield the ($t$-dependent)
exponential dichotomy $(W_{Euler}^{k,q})^{2}=Y_{u}(t)\oplus Y_{cs}(t)$ of
\eqref{E:LEulerA} such that $T(t,t_{0})Y_{u,cs}(t_{0})=Y_{u,cs}(t)$ and the
exponential decay rate of $T(t,t_{0})$ as $t\rightarrow-\infty$ (or $+\infty$)
in $Y_{u}(t)$ (or $Y_{cs}(t)$) is bounded roughly by $\lambda_{u}$ (or
$\lambda_{cs}$). To define $Y_{u}(t)$, it is natural from (\ref{formula-w-t})
that, for $(w,w_{t})\in Y_{u}(t)$, $w_{t}$ takes the form of $\big(Du_{0}%
(t)\big)^{-1}\big(e^{tL}v\circ u_{0}(t)\big)$ with $v\in X_{u}$, where $L$ (as
well as $L_{u,cs}$) is the linear operator defined in the linearized Euler
equation \eqref{E:LEuler}. Also the decay of $w$ as $t\rightarrow-\infty$
requires it take the form of $w(t)=\int_{-\infty}^{t}w_{t}dt^{\prime}$.
Therefore, for any $t\in\mathbb{R}$, let
\[
Y_{u}(t)=\{\Big(\int_{-\infty}^{t}\big(Du_{0}(\tau)\big)^{-1}\big((e^{\tau
L}v)\circ u_{0}(\tau)\big)d\tau,\,\big(Du_{0}(t)\big)^{-1}\big(e^{tL}v\circ
u_{0}(t)\big)\Big)^{T}\mid v\in X_{u}\}.
\]
The convergence of the above infinite integral follows directly from
assumptions (A2) and (A3) with $K$ sufficient large depending only on $k$.
Similarly, for $(w,w_{t})\in Y_{cs}(t)$, $w_{t}$ takes the form of
$\big(Du_{0} (t)\big)^{-1}\big(e^{tL}v\circ u_{0}(t)\big)$ with $v\in X_{cs}$,
and $w$ should take the form of
\[
w_{0}+\int_{0}^{t}\big(Du_{0}(\tau)\big)^{-1}\big((e^{tL} v)\circ u_{0}%
(\tau)\big)d\tau.
\]
However, $w_{0} \in W_{Euler}^{k, q}$ is arbitrary and thus we can absorb the
integral term into $w_{0}$. Define
\begin{align*}
Y_{cs}(t)=\{\Big(w,\,\big(Du_{0}(t)\big)^{-1}  &  \big(e^{tL}v\circ
u_{0}(t)\big)\Big)^{T}\mid w\in W_{Euler}^{k,q},\;v\in X_{cs}\}.
\end{align*}

\begin{lemma}
\label{L:ED} It holds $(W_{Euler}^{k, q})^{2} = Y_{cs}(t) \oplus Y_{u}(t)$.
Moreover, let $P_{u, cs} (t) \in L\big((W_{Euler}^{k, q})^{2}\big)$ be the
projections associate to this decomposition and
\[
T_{u, cs} (t, t_{0})= T(t, t_{0})|_{Y_{u, cs} (t_{0})}.
\]
Then for any $t, t_{0}\le0$, we have $T_{u, cs}(t, t_{0}) Y_{u, cs} (t_{0})=
Y_{u, cs} (t)$ and there exist constants $K>0$ depending only on $k$ and
$C_{0}\ge1$ depending only on $k, n, q, v_{0}$ such that if $\lambda_{u} >
K\mu$, we have
\begin{align*}
&  |P_{u, cs}(t)|_{L\big((W_{Euler}^{k, q})^{2}\big)} \le C_{0} e^{-K \mu t},
\qquad\forall\; t\le0\\
&  |T_{cs}(t, t_{0})| \le C_{0} e^{\lambda_{cs} (t-t_{0}) - K \mu t_{0}},
\qquad\forall\; 0\ge t\ge t_{0}\\
&  |T_{u} (t, t_{0})| \le C_{0} e^{(\lambda_{u} - K \mu) (t-t_{0})- K \mu
t_{0} }, \qquad\forall\; t\le t_{0}\le0.
\end{align*}

\end{lemma}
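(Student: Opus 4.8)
The plan is to read off every assertion directly from the explicit formulas defining $Y_{u}(t)$ and $Y_{cs}(t)$, using only two ingredients: the dichotomy bounds on $e^{tL_{u,cs}}$ from (A2) (recall $e^{tL}$ is a $C_{0}$-group—$L$ is a bounded perturbation of the transport operator $v\mapsto-(v_{0}\cdot\nabla)v$—so $e^{tL_{u}}$, $e^{tL_{cs}}$ are bijections of $X_{u}$, $X_{cs}$), and the bounds $|u_{0}(t,\cdot)|_{C^{l}}+|(u_{0}(t,\cdot))^{-1}|_{C^{l}}\le Ce^{l\mu|t|}$ from \eqref{E:LagM0}. Abbreviate by $\Theta_{t}$ the linear map $g\mapsto(Du_{0}(t))^{-1}(g\circ u_{0}(t))$, i.e.\ the pullback of a vector field by the volume preserving diffeomorphism $u_{0}(t)$; this preserves $W_{Euler}^{k,q}$, and since $W^{k,q}$ is a Banach algebra for $k>\frac nq$ and composition with a $C^{k}$ volume preserving map costs only powers of its $C^{k}$ norm, \eqref{E:LagM0} gives $|\Theta_{t}|,|\Theta_{t}^{-1}|\le Ce^{K\mu|t|}$ for a $K$ depending only on $k$. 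In this notation an element of $Y_{u}(t)$ is $\big(\int_{-\infty}^{t}\Theta_{\tau}(e^{\tau L}v)\,d\tau,\ \Theta_{t}(e^{tL}v)\big)$ with $v\in X_{u}$, and an element of $Y_{cs}(t)$ is $\big(w,\ \Theta_{t}(e^{tL}v)\big)$ with $w\in W_{Euler}^{k,q}$, $v\in X_{cs}$. Convergence of the first integral follows from $|\Theta_{\tau}(e^{\tau L}v)|\le CMe^{(\lambda_{u}-K\mu)\tau}|v|$ for $\tau\le0$ together with the hypothesis $\lambda_{u}>K\mu$; boundedness (and, for the second, boundedness below) of $v\mapsto\int_{-\infty}^{t}\Theta_{\tau}(e^{\tau L}v)\,d\tau$ and $v\mapsto\Theta_{t}(e^{tL}v)$ then shows $Y_{u}(t),Y_{cs}(t)$ are closed subspaces.

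For the invariance I would use the correspondence already established in the text: for each $v\in X_{u}$ the curve $t\mapsto\big(\int_{-\infty}^{t}\Theta_{\tau}(e^{\tau L}v)\,d\tau,\ \Theta_{t}(e^{tL}v)\big)$ solves \eqref{E:LEulerA}, and $(w,0)$ is a stationary solution for every $w\in W_{Euler}^{k,q}$. Since \eqref{E:LEulerA} is a linear ODE whose coefficients are bounded locally in $t$ (Lemma \ref{L:CF}), it is well posed in both time directions, so by uniqueness $T(t,t_{0})$ carries the $Y_{u}(t_{0})$-point with parameter $v$ to the $Y_{u}(t)$-point with the same $v$, and carries $Y_{cs}(t_{0})$ onto $Y_{cs}(t)$, the free first component corresponding exactly to the stationary solutions $(w,0)$. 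Running this forwards and backwards gives $T_{u,cs}(t,t_{0})Y_{u,cs}(t_{0})=Y_{u,cs}(t)$.

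For the splitting $(W_{Euler}^{k,q})^{2}=Y_{cs}(t)\oplus Y_{u}(t)$ and the projection bounds I would argue componentwise. Given $(a,b)$, decompose $\Theta_{t}^{-1}b=\Pi_{u}(\Theta_{t}^{-1}b)+\Pi_{cs}(\Theta_{t}^{-1}b)$ via $W_{Euler}^{k,q}=X_{u}\oplus X_{cs}$ (projections $\Pi_{u,cs}$ bounded by (A2)), and put $b_{2}=\Theta_{t}\Pi_{u}\Theta_{t}^{-1}b$, $b_{1}=b-b_{2}$; since $e^{tL_{u,cs}}$ are bijections of $X_{u,cs}$, $b_{2}$ is the second component of a unique element of $Y_{u}(t)$, its first component $a_{2}$ is thereby determined, and $a_{1}=a-a_{2}$ is unrestricted, i.e.\ $(a_{1},b_{1})\in Y_{cs}(t)$; uniqueness is immediate from the same computation. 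The point of the formula $P_{u}(t)(a,b)=(a_{2},\Theta_{t}\Pi_{u}\Theta_{t}^{-1}b)$ is that it never inverts $e^{tL_{u}}$ against its dichotomy direction: the second slot is bounded by $|\Theta_{t}|\,|\Pi_{u}|\,|\Theta_{t}^{-1}|\,|b|\le Ce^{2K\mu|t|}|b|$, while $a_{2}=\int_{-\infty}^{t}\Theta_{\tau}\big(e^{(\tau-t)L_{u}}\Theta_{t}^{-1}b_{2}\big)\,d\tau$ is estimated by inserting $|\Theta_{\tau}|\le Ce^{K\mu|\tau|}$, $|e^{(\tau-t)L_{u}}|\le Me^{\lambda_{u}(\tau-t)}$ for $\tau\le t\le0$, and $|\Theta_{t}^{-1}b_{2}|\le Ce^{K\mu|t|}|b|$, the $\tau$-integral converging thanks to $\lambda_{u}>K\mu$. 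This gives $|P_{u}(t)|\le C_{0}e^{-K\mu t}$ for $t\le0$ after renaming the constant, and $|P_{cs}(t)|\le1+|P_{u}(t)|$ the same.

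The growth estimates follow by tracking the same formulas along the evolution. For $z_{0}=(a_{0},b_{0})\in Y_{u}(t_{0})$ set $\xi=\Theta_{t_{0}}^{-1}b_{0}\in X_{u}$, so $|\xi|\le Ce^{K\mu|t_{0}|}|b_{0}|$; then $T_{u}(t,t_{0})z_{0}$ has second component $\Theta_{t}(e^{(t-t_{0})L_{u}}\xi)$ and first component $\int_{-\infty}^{t}\Theta_{\tau}(e^{(\tau-t_{0})L_{u}}\xi)\,d\tau$, and since $t-t_{0}\le0$ and $\tau-t_{0}\le0$ only $e^{sL_{u}}$ with $s\le0$ occur, so (A2), $|\Theta_{\cdot}|\le Ce^{K\mu|\cdot|}$ and $\lambda_{u}>K\mu$ give $|T_{u}(t,t_{0})z_{0}|\le Ce^{\lambda_{u}(t-t_{0})-K\mu t-K\mu t_{0}}|z_{0}|$ for $t\le t_{0}\le0$; using $t\le t_{0}\Rightarrow-K\mu t_{0}\le-K\mu t$ this is $\le Ce^{(\lambda_{u}-2K\mu)(t-t_{0})-2K\mu t_{0}}|z_{0}|$, the claimed bound with $K$ doubled. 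The $Y_{cs}$ case is parallel, now with $|e^{sL_{cs}}|\le Me^{\lambda_{cs}s}$ for the $s\ge0$ that actually occur ($t-t_{0}\ge0$, and the relevant $\tau$ satisfy $\tau-t_{0}\ge0$), with $a(t)=a_{0}+\int_{t_{0}}^{t}b(\tau)\,d\tau$ handled using $-K\mu t\le-K\mu t_{0}$ for $t\ge t_{0}$ (any residual polynomial-in-$t_{0}$ factor, arising if $\lambda_{cs}\le K\mu$, absorbed into a slightly larger $K$). I expect the genuine difficulty to be precisely this bookkeeping: systematically arranging each expression so that only the available dichotomy direction of $e^{tL_{u}}$ or $e^{tL_{cs}}$ is ever used, and verifying that every factor $e^{K\mu|t|}$, $e^{K\mu|t_{0}|}$ spawned by changing between the Eulerian and Lagrangian frames is dominated—once $K$ (depending only on $k$) is taken large and $\lambda_{u}>K\mu$ is imposed—so that the exponential splitting with rates $\lambda_{u}-K\mu$ and $\lambda_{cs}$ persists.
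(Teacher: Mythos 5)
Your proposal is correct and follows essentially the same route as the paper: you parameterize $Y_{u,cs}(t)$ by $v\in X_{u,cs}$, exploit the pull-back operator bounds coming from \eqref{E:LagM0}, use (A2) for the dichotomy rates, and split $(a,b)$ by applying the $X_u\oplus X_{cs}$ projections to $\Theta_t^{-1}b$ — which, after commuting $P^0_{u,cs}$ past $e^{tL}$, is exactly the paper's construction of $v_{u,cs}(t)$ and the resulting $z_u$, $z_{cs}$. The only cosmetic difference is that you package the conjugation $g\mapsto (Du_0(t))^{-1}(g\circ u_0(t))$ as a single operator $\Theta_t$ and note the commutation $e^{tL}P^0_u e^{-tL}=P^0_u$ explicitly, which tidies the bookkeeping without changing the argument.
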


\begin{remark}
This lemma shows that unlike the traditional exponential dichotomy, the norms
of the projections in the invariant splitting here is not uniformly bounded in
$t$ and may approach $\infty$ as $t \to-\infty$. This means that the angles
between the unstable and center-stable subspaces $Y_{u, cs}(t)$ of
\eqref{E:LEulerA} may not have a uniform positive lower bound as $t \to
-\infty$.
\end{remark}

\begin{proof}
We first show the invariance of $Y_{cs, u}(t)$ under $T(t, t_{0})$. In fact,
for any
\begin{align*}
Y_{u} (t_{0}) \ni z = (w, w_{1})^{T} =\Big(\int_{-\infty}^{t_{0}} \big(Du_{0}
(\tau)\big)^{-1} \big( (e^{\tau L} v) \circ u_{0}(\tau)\big) d\tau, \,
\big(Du_{0} (t_{0})\big)^{-1} \big( e^{t_{0} L} v \circ u_{0}(t_{0}%
)\big)\Big)^{T},
\end{align*}
where $v \in X_{u}$, let
\[
v(t) = e^{t L} v \in X_{u}, \qquad\tilde w= \int_{-\infty}^{0} \big(Du_{0}
(\tau)\big)^{-1} \big( (e^{\tau L} v) \circ u_{0}(\tau)\big) d\tau.
\]
Clearly,
\begin{align*}
Y_{u}(t) \ni z(t)=  &  \big(w(t), w_{1}(t)\big)^{T}\triangleq\Big(\int
_{-\infty}^{t} \big(Du_{0}(\tau)\big)^{-1} \big( (e^{\tau L} v) \circ
u_{0}(\tau)\big)d\tau, \, \big(Du_{0}(t)\big)^{-1} \big( e^{tL} v \circ
u_{0}(t)\big) \Big)^{T}\\
=  &  (\tilde w, 0)^{T} + \Big(\int_{0}^{t} \big(Du_{0}(\tau)\big)^{-1}
\big( v(\tau) \circ u_{0}(\tau)\big)d\tau, \, \big(Du_{0}(t)\big)^{-1}
\big( v(t) \circ u_{0}(t)\big) \Big)^{T}%
\end{align*}
is the solution of \eqref{E:LEulerA} with initial data $(\tilde w, v)$ at
$t=0$. Moreover, it satisfies $z(t_{0}) = z$ and thus $Y_{u}(t) \ni z(t) =
T(t, t_{0}) z$ which implies the invariance of $Y_{u}(t)$.

The above arguments also leads to the decay estimate of $T_{u}(t, t_{0})$. In
fact,one may compute
\[
w_{1} (t) = \big(Du_{0}(t)\big)^{-1} \Big( \big( e^{(t-t_{0})L} (( D
u_{0}(t_{0}) w_{1}) \circ(u_{0}(t_{0}))^{-1})\big) \circ u_{0}(t) \Big).
\]
Since $v \in X_{u}$, we obtain from \eqref{E:LagM0} and assumptions (A2) and
(A3)
\begin{align*}
|w_{1} (t)|_{W^{k, q}} \le C e^{\lambda_{u} (t- t_{0})- K \mu t} |w_{1}|_{W^{k
, q}}, \qquad\forall\; t\le t_{0}\le0.
\end{align*}
Finally from $w(t) = \int_{-\infty}^{t} w_{1}(\tau) d\tau$, we obtain the
estimate for $T_{u} (t, t_{0})$. The proof of the invariance of $Y_{cs}(t)$
and the estimate for $T_{cs} (t, t_{0})$ are similar.

To prove the direct sum and obtain the bounds on the projection operators, let
$P_{cs, u}^{0} \in L(W_{Euler}^{k, q})$ be the projections given by the
decomposition $W_{Euler}^{k, q} = X_{cs} \oplus X_{u}$ assumed in hypothesis
(A2). Given any $z=(w, w_{1})^{T} \in(W_{Euler}^{k, q})^{2}$ and $t \le0$,
let
\begin{align*}
&  v_{cs, u} (t) = P_{cs, u}^{0} e^{-tL}\Big( \big(Du_{0}(t) w_{1}\big) \circ
u_{0}(t)^{-1} \Big) \in X_{cs, u}\\
&  w_{0} (t) = w- \int_{-\infty}^{t} \big(Du_{0} (\tau)\big)^{-1}
\big( (e^{\tau L} v_{u} (t) ) \circ u_{0}(\tau)\big) d\tau
\end{align*}
From \eqref{E:LagM0} and assumptions (A2) and (A3), we have
\[
|e^{\tau L} v_{u}(t) |_{W^{k, q}} \le C e^{\lambda_{u} (\tau-t) - K \mu t}
|w_{1}|_{W^{k, q}}, \quad\forall\; \tau\le t.
\]
Let
\begin{align*}
z_{u} =  &  \Big(\int_{-\infty}^{t} \big(Du_{0} (\tau)\big)^{-1}
\big( (e^{\tau L} v_{u}(t)) \circ u_{0}(\tau)\big) d\tau, \, \big(Du_{0}
(t)\big)^{-1} \big( e^{tL} v_{u}(t) \circ u_{0}(t)\big)\Big)^{T} \in
Y_{u}(t)\\
z_{cs}=  &  \Big(w_{0}(t) , \, \big(Du_{0} (t)\big)^{-1} \big( e^{tL} v_{cs}
(t) \circ u_{0}(t)\big) \Big)^{T} \in Y_{cs}(t).
\end{align*}
Obvious $z= z_{u} + z_{cs}$ and this splitting is unique. Therefore
$(W_{Euler}^{k, q})^{2} = Y_{cs}(t) \oplus Y_{u}(t)$ and $P_{cs, u} (t) z =
z_{cs, u}$. It is straight forward to first obtain the estimates on $P_{u}
(t)$ based on the above inequalities and the bound on $P_{cs}(t)= I- P_{u}(t)$
also follows.
\end{proof}

Let
\[
F_{u, cs} (t, z)= P_{u, cs} (t) F (t, z).
\]
Then Lemma \ref{L:ED} and \eqref{E:F0} imply that there exist $K>0$ and
$C_{1}>0$ such that for any $t\le0$, it holds
\begin{equation}
\label{E:F}F_{cs, u} (t, 0) =0= \mathcal{D}_{z} F_{cs,u}(t, 0), \quad
|\mathcal{D}_{z}^{2} F_{u,cs} (t, z)|_{C^{r-4} (B_{\delta_{0}} (W_{Euler}^{k,
q})^{2}, W_{Euler}^{k, q})} \le C_{1} e^{-K \mu t},
\end{equation}
where again the highest order derivative is in the G\^ateaux sense which is
sufficient to yield the $C^{r-3,1}$ bounds.

\subsection{Integral unstable manifolds of \eqref{E:EulerA}.}

\label{SS:t-InMa}

We will follow the Lyapunov-Perron integral equation method to construct the
integral unstable manifolds. In the standard construction of invariant
manifolds, where the bounds on the invariant splitting and the nonlinear terms
are uniform in $t$, small Lipschitz constant of $F_{cs, u}$ is sufficient in
the construction of local invariant manifolds. As we do not have $t$-uniform
estimates here, we repeatedly used the property that $F(t, z) = O(|z|^{2})$ to
yield an extra $|z(t)|= O(e^{\lambda t})$ with $\lambda t<0 $. This quadratic
nature of $F$ combined with the exponential gap condition \eqref{E:lambda}
allows us to complete the proof of Proposition \ref{P:LP1} in the below.
However, similar construction would not work for the construction of the
center-stable manifold since solutions $z(t)$ on the center-stable manifold do
not satisfy $z(t) \to0$ as $t\to\pm\infty$.

For $\lambda\in(\lambda_{cs}, \lambda_{u})$ and $\delta_{1}\le\delta_{0}$ to
be determined, let
\[
\Gamma= \{ z = (z_{u}, z_{cs}) \in C^{0}\big((-\infty, 0], \overline
{B_{\delta_{1}} (W_{Euler}^{k, q})^{2}} \big) \mid|z|_{\lambda}\le\delta
_{1}\}
\]
where $z_{u, cs} (t) \in Y_{u, cs} (t)$ and
\[
|z|_{\lambda}\triangleq\sup_{t\le0} e^{-\lambda t}|z(t)|_{W^{k, q}}.
\]
This $\Gamma$ is the set of functions with the desired backward in time decay
expected to be satisfied by the solutions on the unstable manifolds. For any
$z \in\Gamma$ and
\begin{equation}
\label{E:zu0}z_{u0} \in Y_{u}(0), \qquad|z_{u0}|_{W^{k, q}} \le\frac
{\delta_{1}}{2C_{0}}%
\end{equation}
where $C_{0}$ is given in Lemma \ref{L:ED}, define $\mathcal{L}(\cdot,
z_{u0})$, where $\tilde z = (\tilde z_{u}, \tilde z_{cs})= \mathcal{L}(z,
z_{u0})$ is defined as, for any $t\le0$,
\begin{equation}
\label{E:LP}%
\begin{cases}
\tilde z_{u} (t) = T_{u} (t, 0) z_{u0} + \int_{0}^{t} T_{u}(t, \tau) F_{u}
\big(\tau, z(\tau)\big) d\tau\\
\tilde z_{cs} (t) = \int_{-\infty}^{t} T_{cs} (t, \tau) F_{cs} \big(\tau,
z(\tau)\big) d\tau.
\end{cases}
\end{equation}

\begin{proposition}
\label{P:LP1} There exists $K>0$ depending only on $r$ and $k$ such that if
$\lambda$ and $\delta_{1}$ satisfy
\begin{align}
&  \lambda\in\big(\lambda_{cs} + K \mu, \, \lambda_{u} - K \mu
\big)\label{E:lambda}\\
&  C_{0}C_{1} \delta_{1} \big(\frac1{\lambda- \lambda_{cs}} + \frac
1{\lambda_{u} - K \mu- \lambda}\big) < \frac12 \label{E:delta1}%
\end{align}
where $C_{0}$ and $C_{1}$ are from Lemma \ref{L:ED} and \eqref{E:F},
respectively, then $\mathcal{L}(\cdot, z_{u0})$ is a contraction on $\Gamma$
with the Lipschitz constant $\frac12$ for any $z_{u0}$ satisfying
\eqref{E:zu0}. Moreover $|\mathcal{L}|_{C^{r-3,1}} \le C$ for some $C>0$
depending only on $r, k, q, n, $ and $v_{0}$.
\end{proposition}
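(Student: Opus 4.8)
The plan is to establish Proposition~\ref{P:LP1} in three stages: first show $\mathcal{L}(\cdot,z_{u0})$ maps $\Gamma$ into itself, then show it is a contraction with constant $\frac12$ in the $|\cdot|_\lambda$ norm, and finally bound the higher derivatives to get the $C^{r-3,1}$ estimate. For the self-mapping property, I would plug $z\in\Gamma$ into \eqref{E:LP} and estimate each piece using Lemma~\ref{L:ED} and \eqref{E:F}. The key trick, as flagged in the text just before the proposition, is that $F(t,z)=O(|z|^2)$: for $z\in\Gamma$ we have $|z(\tau)|_{W^{k,q}}\le\delta_1 e^{\lambda\tau}$, so $|F_{u,cs}(\tau,z(\tau))|\le C_1 e^{-K\mu\tau}|z(\tau)|^2\le C_1\delta_1^2 e^{-K\mu\tau+2\lambda\tau}$. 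Combining with the dichotomy bounds from Lemma~\ref{L:ED} (which carry their own $e^{-K\mu\tau}$ and $e^{-K\mu t_0}$ factors), the integrals in $\tilde z_{cs}$ and $\tilde z_u$ converge precisely because of the gap condition \eqref{E:lambda} forcing $\lambda>\lambda_{cs}+K\mu$ and $\lambda<\lambda_u-K\mu$, and one extracts a factor $e^{\lambda t}$ from each integral. Summing the contributions and using \eqref{E:zu0} and \eqref{E:delta1}, one gets $|\tilde z|_\lambda\le \frac{\delta_1}{2}+\frac{\delta_1}{2}=\delta_1$, so $\mathcal{L}(z,z_{u0})\in\Gamma$.

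For the contraction estimate, given $z,z'\in\Gamma$ I would write $F_{u,cs}(\tau,z(\tau))-F_{u,cs}(\tau,z'(\tau))$ and use $\mathcal{D}_z F_{u,cs}(\tau,0)=0$ together with the bound on $\mathcal{D}_z^2 F_{u,cs}$ from \eqref{E:F} to get a Lipschitz estimate of the form $|F_{u,cs}(\tau,z(\tau))-F_{u,cs}(\tau,z'(\tau))|\le C_1 e^{-K\mu\tau}\big(|z(\tau)|+|z'(\tau)|\big)|z(\tau)-z'(\tau)|\le 2C_1\delta_1 e^{-K\mu\tau+\lambda\tau}|z-z'|_\lambda$. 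Feeding this into \eqref{E:LP} and running the same integral computations as above — the $T_{u,0}z_{u0}$ term drops out since it does not depend on $z$ — produces $|\mathcal{L}(z,z_{u0})-\mathcal{L}(z',z_{u0})|_\lambda\le 2C_0 C_1\delta_1\big(\frac{1}{\lambda-\lambda_{cs}}+\frac{1}{\lambda_u-K\mu-\lambda}\big)|z-z'|_\lambda<\frac12|z-z'|_\lambda$ by \eqref{E:delta1}. The constant $K$ must be chosen large enough (depending on $r$ and $k$) so that the $e^{l\mu|t|}$ growth of $u_0$ in \eqref{E:LagM0} and the $e^{K\mu|t|}$ growth of $A,B$ in Lemma~\ref{L:CF} are all dominated; this is where the $K_0\mu$ in (A3) gets absorbed.

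For the $C^{r-3,1}$ bound on $\mathcal{L}$ — meaning smoothness as a map of $(z,z_{u0})$ with values in an appropriate weighted-continuous-function space — I would differentiate \eqref{E:LP} repeatedly in $z$ and $z_{u0}$ under the integral sign. Each derivative $\mathcal{D}_z^j F_{u,cs}$ is controlled by \eqref{E:F} (for $j\le r-4$, plus one more Gâteaux derivative giving the extra Lipschitz order, hence $r-3,1$), again with a factor $e^{-K\mu\tau}$, and the chain rule applied to $\tau\mapsto z(\tau)$ produces products of lower-order terms each carrying its own $e^{\lambda\tau}$ factor from the $\Gamma$-bound, so the net exponential weight in every term remains integrable against the dichotomy kernel under \eqref{E:lambda}. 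The resulting bounds are uniform in $z_{u0}$ on the ball \eqref{E:zu0}, giving $|\mathcal{L}|_{C^{r-3,1}}\le C$ with $C$ depending only on $r,k,q,n,v_0$ through $C_0,C_1$.

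I expect the main obstacle to be the bookkeeping of the non-uniform exponential weights: unlike the classical Lyapunov--Perron setup, both the projections $P_{u,cs}(t)$ and the nonlinearity $F_{u,cs}$ blow up like $e^{-K\mu t}$ as $t\to-\infty$, so one cannot simply invoke a small-Lipschitz-constant argument. The delicate point is verifying that the \emph{product} of (i) the dichotomy decay $e^{(\lambda_u-K\mu)(t-\tau)}$ or $e^{\lambda_{cs}(t-\tau)}$, (ii) the weight $e^{-K\mu\tau}$ from $F_{u,cs}$, and (iii) the factor $|z(\tau)|\sim e^{\lambda\tau}$ coming from the quadratic nature of $F$, integrates to something bounded by $e^{\lambda t}$ times a constant with a controllable denominator $\frac{1}{\lambda-\lambda_{cs}}+\frac{1}{\lambda_u-K\mu-\lambda}$. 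This forces $\lambda$ to lie strictly inside the shrunken gap \eqref{E:lambda}, which is nonempty exactly because $\lambda_u-\lambda_{cs}>K_0\mu\ge 2K\mu$ by (A3); tracking how large $K$ needs to be (hence pinning down $K_0$) is the technical heart of the argument, but it is a finite computation once the structure above is in place.
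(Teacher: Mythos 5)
Your approach is the same Lyapunov--Perron iteration the paper uses, and the self-mapping step, the role of the quadratic vanishing of $F$, and the necessity of the shrunken gap \eqref{E:lambda} are all identified correctly. However, the displayed Lipschitz estimate in your contraction step is not strong enough to close the argument as written, and this is precisely the place where the non-uniform weights are dangerous.

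You write
\[
|F_{u,cs}(\tau,z(\tau))-F_{u,cs}(\tau,z'(\tau))|\le C_1 e^{-K\mu\tau}\big(|z(\tau)|+|z'(\tau)|\big)\,|z(\tau)-z'(\tau)|\le 2C_1\delta_1 e^{(\lambda-K\mu)\tau}|z-z'|_\lambda ,
\]
i.e.\ you substitute $|z(\tau)|\le\delta_1 e^{\lambda\tau}$ but then replace $|z(\tau)-z'(\tau)|$ by $|z-z'|_\lambda$ rather than by $e^{\lambda\tau}|z-z'|_\lambda$. While $|z(\tau)-z'(\tau)|\le|z-z'|_\lambda$ is true for $\tau\le0$, it discards exactly the extra decay factor $e^{\lambda\tau}$ that the argument needs. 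With your bound, the exponent in the center-stable integral becomes $(\lambda_{cs}-\lambda)(t-\tau)-2K\mu\tau$, and $\int_{-\infty}^t e^{(\lambda_{cs}-\lambda)(t-\tau)-2K\mu\tau}\,d\tau$ evaluates to $e^{-2K\mu t}/(\lambda-\lambda_{cs}-2K\mu)$, which blows up as $t\to-\infty$ and so does \emph{not} yield the claimed $|\cdot|_\lambda$-contraction. The paper avoids this by keeping the full second-order Taylor gain, producing $C_1\delta_1 e^{(2\lambda-K\mu)\tau}|z-z'|_\lambda$; with that exponent the leftover term is $(\lambda-2K\mu)\tau\le0$ (for $K$ chosen appropriately via (A3)), so the integral is bounded by $1/(\lambda-\lambda_{cs})$ uniformly in $t\le0$. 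In short, the contraction requires \emph{two} factors of $e^{\lambda\tau}$ --- one from $|z(\tau)|$ via $\mathcal{D}_z F(\tau,0)=0$ and one from $|z(\tau)-z'(\tau)|$ --- not one, and you need both to beat the $e^{-K\mu\tau}$ coming from $F$ and the projections. (The spurious factor $2$ in your final bound, which would violate \eqref{E:delta1}, is a separate minor looseness: a Taylor/mean-value expansion gives $\max(|z|,|z'|)$ rather than the sum and no factor $2$ appears.) Your sketch of the $C^{r-3,1}$ bound by differentiating under the integral is consistent with what the paper does via \eqref{E:LCL}, and your closing remarks about the interplay of the three exponential weights show you see the issue conceptually --- the fix is simply to retain the second $e^{\lambda\tau}$.
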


\begin{remark}
Assumption (A3) with a reasonably large $K$, depending only on $r$ and $k$,
guarantees the existence of $\lambda$ and $\delta_{1}$ satisfying the above inequalities.
\end{remark}

\begin{remark}
It is standard in the invariant manifold theory (see, for example,
\cite{CL88}) to prove that $z\in\Gamma$ solves \eqref{E:EulerB} with $z_{u}(0)
= z_{u0}$ if and only if, $z$ is the fixed point of $\mathcal{L}(\cdot,
z_{u0}) $. Therefore, Proposition \ref{P:LP1} shows that, for any given
$z_{u0}$ satisfying \eqref{E:zu0}, there exists a unique solution of
\eqref{E:EulerB} satisfying the exponential decay as $t \to-\infty$ with the
decay rate at least $\lambda$.
\end{remark}

\begin{proof}
In the proof the generic constant $K$ may change from line to line, but always
depends only on $k$ and $r$. From the definition of $\mathcal{L}$, Lemma
\ref{L:ED}, assumptions (A3), \eqref{E:lambda}, and \eqref{E:zu0}, and the
second order Taylor expansion of $F$ based on \eqref{E:F}, (instead of the
usual small Lipschitz estimates of $F$), we obtain for $t\le0$
\begin{align*}
e^{-\lambda t} |\tilde z_{u}(t)|_{W^{k, q}} \le &  C_{0} |z_{u0}|_{W^{k, q}}+
\int_{t}^{0} \frac12 C_{0} C_{1} e^{ - \lambda t +(\lambda_{u} - K \mu)
(t-\tau) - K \mu\tau+ 2 \lambda\tau} d\tau|z|_{\lambda}^{2}\\
\le &  C_{0} |z_{u0}|_{W^{k, q}} + \frac12 C_{0} C_{1} |z|_{\lambda}^{2}
\int_{t}^{0}e^{(\lambda_{u} - K \mu- \lambda) (t-\tau) } d\tau\\
\le &  C_{0} |z_{u0}|_{W^{k, q}} + \frac{C_{0}C_{1} \delta_{1}}{2(\lambda_{u}
- K \mu- \lambda)} |z|_{\lambda}%
\end{align*}
and
\begin{align*}
e^{-\lambda t} |\tilde z_{cs}(t)|_{W^{k, q}} \le &  \int_{-\infty}^{t} \frac12
C_{0} C_{1} e^{ - \lambda t +\lambda_{cs} (t-\tau) - K \mu\tau+ 2 \lambda\tau}
d\tau|z|_{\lambda}^{2}\le\frac{C_{0}C_{1} \delta_{1} |z|_{\lambda}}{2(\lambda-
\lambda_{cs})}.
\end{align*}
Therefore \eqref{E:delta1} implies that
\begin{equation}
\label{E:CL}|\mathcal{L}(z, z_{u0})|_{\lambda}\le C_{0} |z_{u0}|_{W^{k, q}} +
\frac14 |z|_{\lambda}\le\frac34 \delta_{1}%
\end{equation}
and thus $\mathcal{L}(\cdot, z_{u0})$ maps $\Gamma$ into itself.

To prove $\mathcal{L}(\cdot, z_{u0})$ is a contraction on $\Gamma$, we note
that for any $z^{1,2} \in\Gamma$, \eqref{E:F} implies
\begin{align*}
&  |F_{u, cs}(t, z^{2}(t)) - F_{u, cs}(t, z^{1}(t))| \le C_{1} \delta_{1}
e^{(2\lambda- K\mu) t} |z^{2} - z^{1}|_{\lambda}, \quad\forall t\le0
\end{align*}
Therefore, we have
\begin{align*}
e^{-\lambda t} |\tilde z_{u}^{2} (t) - \tilde z_{u}^{1} (t)| \le &  C_{0}
C_{1} \delta_{1} \int_{t}^{0} e^{ (\lambda_{u} - K \mu-\lambda) (t-\tau)}
d\tau|z^{2} - z^{1}|_{\lambda}\le\frac{C_{0}C_{1}\delta_{1}|z^{2} -
z^{1}|_{\lambda}} {\lambda_{u} - K \mu-\lambda}\\
e^{-\lambda t} |\tilde z_{cs}^{2} (t) - \tilde z_{cs}^{1} (t)| \le &  C_{0}
C_{1} \delta_{1} \int_{-\infty}^{t} e^{ (\lambda_{cs} -\lambda) (t-\tau)}
d\tau|z^{2} - z^{1}|_{\lambda}\le\frac{C_{0}C_{1}\delta_{1}|z^{2} -
z^{1}|_{\lambda}} {\lambda-\lambda_{cs}}%
\end{align*}
and thus \eqref{E:delta1} implies that $\mathcal{L}(\cdot, z_{u0})$ is a contraction.

Since $\mathcal{L}$ is linear in $z_{u0}$, we only need to prove its
smoothness in $z \in\Gamma$. For any $z\in\Gamma$, formally the linearization
of $\mathcal{L}$ is given by
\[
\mathcal{D}_{z} \mathcal{L} (z, z_{u0}) z_{1} = \bar z_{1} = (\bar z_{1u},
\bar z_{1cs})
\]
where for $t\le0$,
\begin{equation}
\label{E:LCL}%
\begin{cases}
\bar z_{1u} (t) = \int_{0}^{t} T_{u}(t, \tau) \mathcal{D}_{z} F_{u} \big(\tau,
z(\tau)\big) z_{1}(\tau) d\tau\\
\bar z_{1cs} (t) = \int_{-\infty}^{t} T_{cs} (t, \tau) \mathcal{D}_{z} F_{cs}
\big(\tau, z(\tau)\big) z_{1} (\tau) d\tau.
\end{cases}
\end{equation}
The same procedure as in the above shows that $|\mathcal{D}_{z} \mathcal{L}%
|_{\lambda}\le\frac12$. To show it is indeed the derivative of $\mathcal{L}$,
take $z_{1,2} \in\Gamma$, let
\[
\tilde z_{1,2} = (\tilde z_{1,2u}, \tilde z_{1,2cs}) = \mathcal{L} (z_{1,2},
z_{u0}), \qquad\bar z = (\bar z_{u}, \bar z_{cs}) = \mathcal{D}_{z}
\mathcal{L} (z_{1}, z_{u0}) (z_{2} - z_{1}).
\]
It is straight forward to compute from \eqref{E:F} and Lemma \ref{L:ED}, that
for $t\le0$,
\begin{align*}
&  e^{-\lambda t}|\tilde z_{2u}(t) - \tilde z_{1u}(t) - \bar z_{u} (t) |\\
=  &  e^{-\lambda t} |\int_{0}^{t} T_{u}(t, \tau)\Big(F_{u} \big(\tau, z_{2}
(\tau) \big) - F_{u}\big(\tau, z_{1}(\tau)\big) - \mathcal{D} F_{u} \big(\tau,
z_{1}(\tau)\big) \big(z_{2} (\tau) - z_{1}(\tau) \big) \Big)d\tau|\\
\le &  C_{0}C_{1} |z_{2} - z_{1}|_{\lambda}^{2} \int_{t}^{0} e^{-\lambda t +
(\lambda_{u} - K \mu)(t-\tau) - K \mu\tau+ 2\lambda\tau} d\tau\le\frac
{C_{0}C_{1}|z^{2} - z^{1}|_{\lambda}^{2}} {\lambda_{u} - K \mu-\lambda}.
\end{align*}
The estimate for the center-stable component is very similar and this proves
that $\mathcal{D}_{z} \mathcal{L}$ is indeed the derivative of $\mathcal{L}$.

Finally, we will show that $\mathcal{D}_{z} \mathcal{L}$ is Lipschitz. In
fact, let
\[
\bar z_{1,2} = (\bar z_{12u}, \bar z_{12cs}) = \mathcal{D}_{z} \mathcal{L}%
(z_{1,2}, z_{u0}) z.
\]
Then for any $t\le0$,
\begin{align*}
&  e^{-\lambda t} |\bar z_{2cs} (t) - \bar z_{1cs} (t)| =e^{-\lambda t}
|\int_{-\infty}^{t} T_{cs} (t, \tau) \big(\mathcal{D}_{z} F_{cs} (\tau,
z_{2}(\tau)) - \mathcal{D}_{z} F(\tau, z_{1}(\tau))\big) z(\tau) d\tau|\\
\le &  C_{0}C_{1} |z_{2} - z_{1}|_{\lambda}|z|_{\lambda}\int_{-\infty}^{t}
e^{(\lambda_{cs} -\lambda) (t-\tau) + (\lambda- K \mu) \tau} d\tau\le
\frac{C_{0} C_{1}}{\lambda_{cs} -\lambda} |z_{2} - z_{1}|_{\lambda
}|z|_{\lambda}.
\end{align*}
The estimates for the unstable component is the same and the proof of the
higher order smoothness is similar.
\end{proof}

From the Contraction Mapping Theorem and Proposition \ref{P:LP1}, the mapping
$\mathcal{L}$ has a unique fixed point
\[
z_{\ast}(t,z_{u0})=\big(z_{\ast u}(t,z_{u0}),z_{\ast cs}(t,z_{u0})\big)
\]
which is $C^{r-3,1}$ in $z_{u0}$ and satisfies $z_{\ast u}(0,z_{u0})=z_{u0}$.
Moreover, \eqref{E:CL} implies
\begin{equation}
|z_{\ast}(u_{0})|_{\lambda}\leq2C_{0}|z_{u0}|_{W^{k,q}}. \label{E:z*}%
\end{equation}
Like in the standard invariant manifold theory, these are solutions on the
invariant integral unstable manifold of the non-autonomous system
\eqref{E:EulerA}. Define
\[
h_{L}(z_{u0})=z_{\ast cs}(0,z_{u0})\in X_{cs}%
\]
for all $z_{u0}$ satisfying \eqref{E:zu0}, then the $C^{r-3,1}$ graph
\[
W_{L}^{u}\triangleq graph(h_{L})
\]
defines the slice of the unstable integral manifold of \eqref{E:EulerA} for
$t_{0}=0$. Obviously the uniqueness of the fixed point implies that $z_{\ast
}(\cdot,0)=0$ and thus $h_{L}(0)=0$ and $0\in W_{L}^{u}$. Differentiating the
fixed point equation we obtain
\[
\mathcal{D}_{z_{u0}}z_{\ast}(z_{u0})=\mathcal{D}_{z_{u0}}\mathcal{L}%
\big(z_{\ast}(z_{u0}),z_{u0}\big)+\mathcal{D}_{z}\mathcal{L}\big(z_{\ast
}(z_{u0}),z_{u0}\big)\mathcal{D}_{z_{u0}}z_{\ast}(z_{u0})
\]
Clearly, \eqref{E:LCL} implies that $\mathcal{D}_{z}\mathcal{L}(0,z_{u0})=0$
and thus
\[
\mathcal{D}_{z_{u0}}z_{\ast}(\cdot,0)=\mathcal{D}_{z_{u0}}\mathcal{L}%
(0,0)=\big(T_{u}(\cdot,0),0\big)
\]
which does not have the center-stable component. Therefore, we obtain that
\[
\mathcal{D}_{z_{u0}}h_{L}(0)=0
\]
which means that, at $0$, the tangent space of the unstable integral manifold
$W_{L}^{u}$
\[
T_{0}W_{L}^{u}=Y_{u}(0)=\{(U(v),\,v)^{T}\mid v\in X_{u}\},
\]
where
\begin{equation}
U(v)\triangleq\int_{-\infty}^{0}\big(Du_{0}(\tau)\big)^{-1}\big((e^{\tau
L}v)\circ u_{0}(\tau)\big)d\tau,\quad|U|_{L(X_{u},W_{Euler}^{k,q})}\leq\infty.
\label{E:U}%
\end{equation}
Here the boundedness of $U$ follows from \eqref{E:LagM0}, \eqref{E:lambda} and
assumptions (A2) and (A3).

\subsection{Unstable manifold in the Eulerian coordinates.}

\label{SS:InMa}

From the unstable integral manifold (at $t=0$) $W_{L}^{u}$ constructed in the
Lagrangian coordinates and the corresponding relationship given in
\eqref{E:coord1} and \eqref{E:coord3}, we obtain the $C^{r-3,1}$ invariant
unstable manifold in the Eulerian coordinates
\[
W^{u} \triangleq\{ v =v_{0} + \big(\mathcal{D}\Psi(w) w_{1}\big) \circ
\Psi(w)^{-1} \mid(w, w_{1}) \in W_{L}^{u}\}.
\]
The above expression was derived by substituting $t=0$ into \eqref{E:coord3}.

\begin{lemma}
\label{L:UM} There exist $K>0$ $\left(  \text{depending only on }%
r\ \text{and\ }k\right)  $, $\delta_{2},\ C>0$ depending only on $r,k,q,n$,
and $v_{0}$ such that

\begin{enumerate}
\item There exists $H\in C^{r-3,1}\big(B_{3\delta_{2}}(X_{u}),X_{cs}\big)$
satisfying $|H|_{C^{r-3,1}}\leq C$, $H(0)=0$, $DH(0)=0$, and
\[
\{v_{0}+v_{1}+H(v_{1})\mid v_{1}\in B_{\delta_{2}}(X_{u})\}\subset W^{u}%
\cap\big(v_{0}+B_{2\delta_{2}}(W_{Euler}^{k,q})\big)\subset\{v_{0}%
+v_{1}+H(v_{1})\mid v_{1}\in B_{3\delta_{2}}(X_{u})\}.
\]

\item For any $v_{\#}\in\big(v_{0}+B_{2\delta_{2}}(W_{Euler}^{k,q})\big)\cap
W^{u}$ the solution $v(t)$ of the Euler equation (E) with the initial value
$v(0)=v_{\#}$ satisfies
\[
v(t)\in W^{u},\quad|v(t)-v_{0}|_{W^{k,q}}\leq C|v_{\#}-v_{0}|_{W^{k,q}%
}e^{(\lambda-K\mu)t},\qquad\forall\;t\leq0.
\]

\end{enumerate}
\end{lemma}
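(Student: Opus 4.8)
The plan is to obtain $W^{u}$ (and hence $H$) by transporting the integral unstable manifold $W_{L}^{u}=\mathrm{graph}(h_{L})$ of the non-autonomous system \eqref{E:EulerA} at $t=0$ through the change of coordinates \eqref{E:coord3}. Set $\mathcal{N}(w,w_{1})=(\mathcal{D}\Psi(w)w_{1})\circ\Psi(w)^{-1}$; evaluating \eqref{E:coord3} at $t=0$ (where $u_{0}(0)=\mathrm{id}$, $Du_{0}(0)=I$), a point $(w,w_{1})\in W_{L}^{u}$ corresponds to the Eulerian velocity $v=v_{0}+\mathcal{N}(w,w_{1})$, which is precisely the definition of $W^{u}$. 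I would parametrize $W_{L}^{u}$ by its tangent space $Y_{u}(0)=\{(U(v_{1}),v_{1})^{T}\mid v_{1}\in X_{u}\}$ from \eqref{E:U}: since $U\in L(X_{u},W_{Euler}^{k,q})$, the map $v_{1}\mapsto z_{u0}=(U(v_{1}),v_{1})$ is a bounded linear isomorphism onto $Y_{u}(0)$, and for small $|v_{1}|$ the fixed point $z_{\ast}(0,z_{u0})=z_{u0}+h_{L}(z_{u0})\in(W_{Euler}^{k,q})^{2}$ of Proposition \ref{P:LP1} depends on $v_{1}$ in $C^{r-3,1}$. Writing $z_{\ast}(0,(U(v_{1}),v_{1}))=(w(v_{1}),w_{1}(v_{1}))$, we then have $W^{u}$ near $v_{0}$ equal to $\{v_{0}+\Xi(v_{1})\mid v_{1}\in B_{\rho}(X_{u})\}$ with $\Xi(v_{1})=\mathcal{N}(w(v_{1}),w_{1}(v_{1}))$. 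Because $\mathcal{D}_{z_{u0}}z_{\ast}(0,\cdot)|_{0}=(\mathrm{id},0)$ and $\mathcal{D}h_{L}(0)=0$ (the computation preceding \eqref{E:U}), while $\mathcal{D}\mathcal{N}(0,0)(\delta w,\delta w_{1})=\delta w_{1}$, one gets $\Xi(0)=0$ and $\mathcal{D}\Xi(0)=\iota_{X_{u}}$, the inclusion $X_{u}\hookrightarrow W_{Euler}^{k,q}$. Hence $P_{u}\mathcal{D}\Xi(0)=\mathrm{id}_{X_{u}}$ and $P_{cs}\mathcal{D}\Xi(0)=0$; the inverse function theorem makes $v_{1}\mapsto v_{1}':=P_{u}\Xi(v_{1})$ a local $C^{r-3,1}$ diffeomorphism of a ball in $X_{u}$, and I would define $H(v_{1}'):=P_{cs}\Xi\big((P_{u}\Xi)^{-1}(v_{1}')\big)$. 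Then $H(0)=0$ and $DH(0)=0$ are immediate, $|H|_{C^{r-3,1}}\le C$ follows from the corresponding bound on $\Xi$ (hence on $z_{\ast}$, Proposition \ref{P:LP1}), and the sandwiching of sets with radii $\delta_{2}$, $2\delta_{2}$, $3\delta_{2}$ is the usual graph‑transform bookkeeping, obtained by choosing $\delta_{2}$ small relative to $\delta_{1}$, $C_{0}$, and the uniform Lipschitz constants above.

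The one step that is genuinely delicate — and, I expect, the heart of the lemma — is the assertion that $\Xi$ is $C^{r-3,1}$ into $W_{Euler}^{k,q}$. Naively this fails: $\mathcal{N}$ is not even $C^{1}$ as a map $W_{Euler}^{k,q}\times W_{Euler}^{k,q}\to W_{Euler}^{k,q}$, because differentiating the factor $\Psi(w)^{-1}$ produces a term $-\nabla_{(\mathcal{D}\Psi(w)\delta w)\circ\Psi(w)^{-1}}\mathcal{N}(w,w_{1})$ containing $\nabla\mathcal{N}(w,w_{1})\in W^{k-1,q}$, i.e.\ a one‑derivative loss — exactly the obstruction that makes the left translations on $\mathcal{G}$ non‑smooth (Proposition \ref{P:translation}). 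The resolution should mirror the commutator analysis in the proof of Proposition \ref{P:CP}: rather than estimating $\mathcal{D}_{w}\mathcal{N}$ in isolation, one differentiates $\Xi$ while keeping the derivative‑losing pieces paired against the Lyapunov--Perron identity $z_{\ast}=\mathcal{L}(z_{\ast},z_{u0})$ and the explicit form of $Y_{u,cs}(t)$, so that the top‑order terms cancel. Here the extra regularity $v_{0}\in W^{k+r,q}$ from (A1) enters twice: it gives the $u_{0}(t)$‑factors in \eqref{E:coord3} and the coefficients $Du_{0},D^{2}u_{0}$ in the nonlinearity \eqref{E:B} enough smoothness, and — combined with the quadratic/decay structure that already drove Proposition \ref{P:LP1} — it is what should close the bad commutators in $W^{k,q}$ after each of the first $r-3$ differentiations. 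Equivalently, one may first run the Lyapunov--Perron scheme in $W_{Euler}^{k+j,q}$ to see that, near $v_{0}$, the manifold $W^{u}$ in $W_{Euler}^{k,q}$ coincides with the one built in the smaller space, which renders the derivative loss harmless; this is the mechanism behind the second Remark following Theorem \ref{T:UnstableM}.

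For part (2), invariance and the decay rate follow once $W^{u}$ is identified as the Eulerian image of the integral manifold. Given $v_{\#}\in(v_{0}+B_{2\delta_{2}}(W_{Euler}^{k,q}))\cap W^{u}$, write $v_{\#}=v_{0}+\Xi(v_{1})$, set $z_{u0}=(U(v_{1}),v_{1})$, and let $z_{\ast}(\cdot,z_{u0})=(w(\cdot),w_{1}(\cdot))$ be the fixed point. By the equivalence established in Subsection \ref{SS:non-auto} between (E) near $v_{0}$ and \eqref{E:EulerA}--\eqref{E:EulerB} — valid for $t\le0$ since $w(t)$ stays in $B_{\delta_{0}}$ by the a priori decay — the Euler solution $v(t)$ with $v(0)=v_{\#}$ is precisely $v(t)=v_{0}+\big((Du_{0}(t))\circ u_{0}(t)^{-1}\big)\big((\mathcal{D}\Psi(w(t))w_{1}(t))\circ\Psi(w(t))^{-1}\circ u_{0}(t)^{-1}\big)$ from \eqref{E:coord3}. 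Since $z_{\ast}(t,z_{u0})$ lies on the time‑$t$ slice $W_{L}^{u}(t)$ of the integral manifold (with $W_{L}^{u}(0)=W_{L}^{u}$), one gets $v(t)\in W^{u}$: here one uses the right‑translation invariance \eqref{E:symm} of \eqref{E:EulerL}, hence the autonomy of (E) in Eulerian variables, to see that the Eulerian image of $W_{L}^{u}(t)$ under the time‑$t$ coordinate map is independent of $t$ and equals $W^{u}$. For the rate, combine $|z_{\ast}(z_{u0})|_{\lambda}\le2C_{0}|z_{u0}|_{W^{k,q}}$ from \eqref{E:z*}, the bound $|z_{u0}|_{W^{k,q}}\le C|v_{1}|_{W^{k,q}}\le C|v_{\#}-v_{0}|_{W^{k,q}}$ (boundedness of $U$ and of the local inverse of $P_{u}\Xi$), and $|u_{0}(t)|_{C^{l}}+|u_{0}(t)^{-1}|_{C^{l}}\le Ce^{l\mu|t|}$ from \eqref{E:LagM0}; this yields $|v(t)-v_{0}|_{W^{k,q}}\le C|v_{\#}-v_{0}|_{W^{k,q}}e^{(\lambda-K\mu)t}$ for $t\le0$, with $K$ absorbing all the composition factors. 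Finally, letting $\lambda\uparrow\lambda_{u}-K\mu$ along \eqref{E:lambda} gives the rate $e^{\lambda't}$ for every $\lambda'<\lambda_{u}-K_{0}\mu_{0}$ asserted in Theorem \ref{T:UnstableM}(4).
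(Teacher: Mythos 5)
Your argument for part~(1) is essentially the paper's: you parametrize $W_{L}^{u}$ through $z_{u0}=(U v_{1},v_{1})\in Y_{u}(0)$, push it through the Lagrangian--to--Eulerian change \eqref{E:coord3} at $t=0$, compute $\Xi(0)=0$ and $\mathcal{D}\Xi(0)=\iota_{X_{u}}$, and apply the Implicit Function Theorem; the paper denotes your $\Xi$ by $G$ and does exactly this. Where you diverge from the paper's text is that you correctly flag that $\mathcal{N}(w,w_{1})=(\mathcal{D}\Psi(w)w_{1})\circ\Psi(w)^{-1}$ loses a derivative when differentiated in $w$ (the term coming from $\Psi(w)^{-1}$ carries $D\big(\mathcal{D}\Psi(w)w_{1}\big)\in W^{k-1,q}$), while the paper simply asserts ``$G\in C^{r-3,1}$'' and moves on. Your observation is well taken; however, neither of your two proposed repairs is actually carried out. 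The ``pair it against the Lyapunov--Perron identity'' idea is not made precise and it is not apparent where the cancellation would come from. The ``run Lyapunov--Perron in $W^{k+j,q}$'' idea is not available under the stated hypotheses: (A2)--(A3) are assumed only in $W^{k,q}$, and the Remark you cite about coincidence of unstable manifolds across spaces itself \emph{assumes} the dichotomy in both spaces rather than deducing one from the other. So you have identified the delicate step but have not closed it, and the paper does not close it either on this point.

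For part~(2) the decay estimate from \eqref{E:z*}, \eqref{E:coord3}, and \eqref{E:LagM0} matches the paper. The invariance argument, however, is handled differently and your version has a gap. The paper does not argue by ``autonomy of (E) plus $t$-independence of the Eulerian image of $W_{L}^{u}(t)$''; instead it fixes a forward horizon $T\geq 0$ and, for each $t_{0}\in(-\infty,T]$, explicitly builds the conjugated function $\tilde{w}(t)=\Psi^{-1}\big(u_{0}(t_{0})\circ\Psi(w(t+t_{0}))\circ u_{0}(t_{0})^{-1}\big)$, uses right-translation invariance of \eqref{E:EulerL} to see that $(\tilde{w},\tilde{w}_{t})$ again solves \eqref{E:EulerA}, and then verifies directly that $\tilde{z}=(\tilde{w},\tilde{w}_{t})\in\Gamma$. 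That last verification is the crux: the conjugation by $u_{0}(t_{0})$ can inflate norms by a factor $e^{K\mu|t_{0}|}$, and this is controlled only because the orbit decays like $e^{\lambda(t+t_{0})}$ with $\lambda>K\mu$, giving in total $e^{(\lambda-K\mu)t_{0}}e^{\lambda t}\leq e^{(\lambda-K\mu)T}e^{\lambda t}$ and forcing the $T$-dependent smallness $\delta_{2}\lesssim\delta_{1}e^{-\lambda T}$. Your appeal to autonomy does not address why, after the time shift, the orbit stays inside the localization domain $B_{\delta_{1}}(W_{Euler}^{k,q})^{2}$ uniformly in $t_{0}$; that is precisely what the conjugation estimate and the choice of $\delta_{2}$ accomplish, and it is why the invariance is only local.
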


\begin{proof}
(1) For any $v_{1}\in X_{u}$ with
\[
|v_{1}|_{W^{k,q}}<\frac{\delta_{1}}{2C_{0}\big(1+|U|_{L(X_{u},W_{Euler}%
^{k,q})}\big)},
\]
let $z_{u0}=(Uv_{1},v_{1})\in Y_{u}(0)$, where $U$ is defined in \eqref{E:U}
and it implies $z_{u0}$ satisfies \eqref{E:zu0}, and
\[
G(v_{1})=\big(\mathcal{D}\Psi(w)w_{1}\big)\circ\Psi(w)^{-1}\quad\text{ where
}(w,w_{1})=z_{\ast}(0,z_{u0})=z_{u0}+h_{L}(z_{u0})\in W_{L}^{u}.
\]
Clearly the definition of $W_{L}^{u}$ and $W^{u}$ and the properties of
$h_{L}$ imply $W^{u}=\{v_{0}+G(v_{1})\}$ and
\[
G(0)=0,\quad DG(0)v_{1}=w_{1}=v_{1},
\]
and $G\in C^{r-3,1}$ with bounds depending only on $r,k,q,n$, and $v_{0}$.
Therefore, the existence and properties of $H$ follows from the Implicit
Function Theorem immediately.

(2) For any
\[
v_{\#}=v_{0}+\big(\mathcal{D}\Psi(w_{\#})w_{1\#}\big)\circ\Psi(w_{\#}%
)\in\big(v_{0}+B_{2\delta_{2}}(W_{Euler}^{k,q})\big)\cap W^{u},\text{ with
}(w_{\#},w_{1\#})\in W_{L}^{u},
\]
let $v(t)$ be the solution of (E) with $v(0)=v_{\#}$ and $z(t)=\big(w(t),w_{t}%
(t)\big)$ the solution of \eqref{E:EulerA} with the initial value
$z(0)=(w_{\#},w_{1\#})$. Since $z(0)=(w_{\#},w_{1\#})\in W_{L}^{u}$ and thus
$z\in\Gamma$ and \eqref{E:z*} implies
\[
|w(t)|_{W^{k,q}}+|w_{t}(t)|_{W^{k,q}}\leq e^{\lambda t}|z|_{\lambda}\leq
2C_{0}e^{\lambda t}|z_{u0}|_{W^{k,q}},\qquad t\leq0.
\]
Therefore \eqref{E:coord3} implies the desired decay estimate.
\[
|v(t)-v_{0}|_{W^{k,q}}\leq Ce^{(\lambda-K\mu)t}\rightarrow0\text{ as
}t\rightarrow-\infty.
\]

To see the local invariance of $W^{u}$, fixed $T\geq0,\ $for $t_{0}\in
(-\infty,T]$, let
\[
\tilde{w}(t)=\Psi^{-1}\Big(u_{0}(t_{0})\circ\Psi\big(w(t+t_{0})\big)\circ
u_{0}(t_{0})^{-1}\Big),\quad t\leq0.
\]
Since the right translation invariance and \eqref{E:coord1} imply
\[
\tilde{u}(t)=u_{0}(t)\circ\Psi\big(\tilde{w}(t)\big)=u_{0}(t+t_{0})\circ
\Psi\big(w(t+t_{0})\big)\circ u_{0}(t_{0})^{-1}=u(t+t_{0})\circ u_{0}%
(t_{0})^{-1}%
\]
is a solution of \eqref{E:EulerL}, we have $(\tilde{w},\tilde{w}_{t})$ is a
solution of \eqref{E:EulerA} which clearly corresponds to the solution
$\tilde{v}(t)=v(t+t_{0})$ of the Euler equation (E). Moreover, Proposition
\ref{P:coordCG} implies
\begin{align*}
|\tilde{w}(t)|_{W^{k,q}}+|\tilde{w}_{t}(t)|_{W^{k,q}}  &  \leq
C\big(|w(t+t_{0})|_{W^{k,q}}+|w_{t}(t+t_{0})|_{W^{k,q}}\big)\leq
Ce^{\lambda\left(  t+t_{0}\right)  }|z_{u0}|_{W^{k,q}}\\
&  \leq Ce^{\lambda T}\big(1+|U|_{L(X_{u},W_{Euler}^{k,q})}\big)\left\vert
v_{\#}-v_{0}\right\vert _{W^{k,q}}e^{\lambda t}\\
&  \leq2\delta_{2}Ce^{\lambda T}\big(1+|U|_{L(X_{u},W_{Euler}^{k,q}%
)}\big)e^{\lambda t}.
\end{align*}
By choosing
\[
\delta_{2}\leq\frac{\delta_{1}}{2Ce^{\lambda T}\big(1+|U|_{L(X_{u}%
,W_{Euler}^{k,q})}\big)},
\]
we obtain that the solution $\tilde{z}(t)=\big(\tilde{w}(t),\tilde{w}%
_{t}(t)\big)\in\Gamma$. Therefore $v(t_{0})=\tilde{v}(0)\in W^{u}$ which
implies the invariance.
\end{proof}

The property $DH(0)=0$ immediately implies, as expected, the tangent space at
the steady state $v_{0}$ is given by
\[
T_{v_{0}} W^{u} = X^{u}%
\]
and the proof of Theorem \ref{T:UnstableM} is complete.

\section{Two-dimensional Euler equations}

\label{S:2d}

In this and the next sections, we will illustrate how assumptions (A1) -- (A3)
can be satisfied for certain steady states of the Euler equation (E). In this
section, we consider the case of $\Omega=S^{1}\times(-y_{0},y_{0})$, that is,
$2\pi-$periodic in $x$ and with rigid walls on $\left\{  y=\pm y_{0}\right\}
$.

Let $v=(v_{1},v_{2})^{T}:\Omega\rightarrow\mathbb{R}^{2}$ satisfy $\nabla\cdot
v=0$ in $\Omega$ and $v\cdot N=0$ on $\partial\Omega$. On the one hand, let
\[
\omega=\partial_{x}v_{2}-\partial_{y}v_{1},\qquad s=\frac{1}{|\Omega|}%
\int_{\Omega}v_{1}dxdy
\]
be the curl and the average horizontal momentum, respectively. We note that
$s$ is an invariant of (E) due to the translation symmetry in $x$. On the
other hand, $v$ is uniquely determined by $\omega$ and $s$ through
\begin{equation}
v=J\nabla\Delta^{-1}\omega+se_{1},\qquad J=%
\begin{pmatrix}
0 & -1\\
1 & 0
\end{pmatrix}
,\quad e_{1}=(1,0)^{T} \label{E:v}%
\end{equation}
where $\Delta^{-1}$ is the inverse of the Laplacian with zero Dirichlet
boundary condition. It is clear that
\[
\frac{1}{C}|v|_{W^{k,q}}\leq|\omega|_{W^{k-1,q}}+|s|\leq C|v|_{W^{k,q}},\qquad
k\geq1,\;q>1
\]
for some $C>0$. In the $(\omega,s)$ representation, (E) takes the form
\begin{equation}
\omega_{t}+v\cdot\nabla\omega=0,\qquad s_{t}=0 \label{E:omega}%
\end{equation}
where $v$ is considered as determined by $(\omega,s)$ by \eqref{E:v}.

\begin{remark}
Due to the nontrivial first cohomology group of $\Omega$, the vorticity alone
does not determine a vector field in $W_{Euler}^{k,q}$ and thus the average
horizontal momentum has to be included in the reformulation of the problem. If
one considers $\Omega=T^{2}$, the $2$D torus, then both momentum invariants
each of which corresponds to a nontrivial element in the first cohomology
groups should be included.

\end{remark}

Suppose $v_{0}\in W^{k+4,q}$, $k>1+\frac{2}{q}$, is a steady state of (E),
which corresponds to $(\omega_{0},s_{0})$ has Lyapunov exponent $\mu_{0}\geq0$
(both forward and backward in time). We linearize \eqref{E:omega} at
$(\omega_{0},s_{0})$ to obtain
\begin{equation}%
\begin{pmatrix}
\omega\\
s
\end{pmatrix}
_{t}=-%
\begin{pmatrix}
v_{0}\cdot\nabla\omega\\
0
\end{pmatrix}
-%
\begin{pmatrix}
(J\nabla\Delta^{-1}\omega+se_{1})\cdot\nabla\omega_{0}\\
0
\end{pmatrix}
\triangleq L_{0}%
\begin{pmatrix}
\omega\\
s
\end{pmatrix}
+L_{1}%
\begin{pmatrix}
\omega\\
s
\end{pmatrix}
. \label{E:LOmega}%
\end{equation}
Assume that there exists an unstable eigenvalue $\lambda_{0}$ with
\ $\operatorname{Re}\lambda_{0}>\left(  k-1\right)  \mu_{0}$ of the linearized
Euler operator $L$ (defined in \eqref{E:LEuler}) on $L^{q}$, and let $v\in
L^{q}$ be the eigenfunction with corresponding $\omega$ and $s$. Then
obviously $s=0$ and
\[
\lambda_{0}\omega+v_{0}\cdot\bigtriangledown\omega=-v\cdot\nabla\omega_{0}.
\]
An integration of above along the steady trajectory$\ \mathbf{X}_{0}\left(
s\right)  $ yields
\[
\omega=\int_{0}^{\infty}e^{-\lambda_{0}s}v\cdot\nabla\omega_{0}\left(
\mathbf{X}_{0}\left(  s\right)  \right)  ds.
\]
By the standard bootstrap argument and the assumption $\operatorname{Re}%
\lambda_{0}>\left(  k-1\right)  \mu_{0}$, we get $\omega\in$ $W^{k-1,q}$ and
$v\in W^{k,q}$.

For any $\lambda_{-}\in\left(  (k-1)\mu_{0},\operatorname{Re}\lambda
_{0}\right)  $ which does not equal the real part of any unstable eigenvalue ,
let
\[
\tilde{X}_{cs}=\{(\omega,s)^{T}\mid\omega\in W^{k-1,q}\mid\lim\sup\frac{1}%
{t}\log|e^{t(L_{0}+L_{1})}\omega|_{W^{k-1,q}}\leq\lambda_{-}\}
\]
which is clearly a invariant subspace of $e^{L_{0}+L_{1}}$. Let
\[
\sigma_{cs}=\sigma(e^{L_{0}+L_{1}}|_{\tilde{X}_{cs}}),\qquad\sigma_{u}%
=\sigma(e^{L_{0}+L_{1}}|_{W^{k-1,q}\times\mathbb{R}})\backslash\sigma
_{cs},\qquad\lambda_{+}=\log(\inf\{|\lambda|\mid\lambda\in\sigma_{u}%
\})\geq\lambda_{-}.
\]
As the groups of bounded operators $e^{t(L_{0}+L_{1})}$ and $e^{tL}$\ are
conjugate through \eqref{E:v}, we also have the invariance of $X_{cs,u}$ under
$e^{tL}$. Since $e^{tL_{0}}\omega=\omega\circ u_{0}(t)^{-1}$, inequality
\eqref{E:LagM0} implies $|e^{tL_{0}}|_{L(W^{k-1,q})}\leq Ce^{(k-1)\mu|t|}$ for
any $\mu>\mu_{0}$ and some $C>0$ depending on $\mu$. In particular, $v_{0}$ is
divergence free, yields that $e^{tL_{0}}$ is a group of isometries on any
$L^{q}$ space. Since $L_{1}$ is a compact operator acting on $(\omega,s)^{T}$,
$e^{t(L_{0}+L_{1})}$ is a compact perturbation to $e^{tL_{0}}$ in the space
$W^{k-1,q}\ $and thus

\begin{itemize}
\item $\lambda_{+}>\lambda_{-}$ and $\sigma_{u}$ is an isolated compact subset
of $\sigma(e^{L_{0}+L_{1}})$. Let $\tilde{X}_{u}$ be the eigenspace of
$e^{L_{0}+L_{1}}$ corresponding to $\sigma_{u}$, and
\[
X_{cs,u}=\{v\in W_{Euler}^{k,q}\mid(\omega,s)\in\tilde{X}_{cs,u}\}.
\]

\item $\tilde X_{u}$ and $X_{u}$ are finite dimensional and

\item (A2) is satisfied for any $\lambda_{cs}$ and $\lambda_{u}$ with
$\lambda_{-}< \lambda_{cs} < \lambda_{u} < \lambda_{+}$.
\end{itemize}

Assumption (A3) depends on the Lyapunov exponents of $v_{0}$. In particular,
if $v_{0}$ is a linearly unstable shear flow, $\mu_{0}=0$ and (A3) is also
satisfied. An example is $v_{0}=(\sin\beta y,0)$. By \cite[Theorem 1.2]%
{Lin03}, $v_{0}$ is linearly unstable when $\beta>1$ and $\left(  \frac{\pi
}{2y_{0}}\right)  ^{2}<\beta^{2}-1$.

\begin{remark}
Consider rotating flows $v_{0}=U\left(  r\right)  \vec{e}_{\theta}$ in an
annulus $\Omega=\left\{  a<r<b\right\}  $. Then by similar arguments as above,
assumptions (A2)-(A3) are satisfied as long as $v_{0}$ is linearly unstable.
\end{remark}

\section{Three-dimensional Euler equations}

In this section, we construct examples of 3D unstable steady flows for which
Theorem \ref{T:UnstableM} can be applied to get unstable (stable) manifolds.
Consider $\Omega=T^{3}$ to be a 3D torus with periods $L_{x},L_{y}$ and
$L_{z}$ in $x,y\ $and $z$ variables. For any profile $U\left(  y,z\right)
,\ $the 3D shear flow $\vec{u}_{0}=\left(  U\left(  y,z\right)  ,0,0\right)  $
is a steady solution of 3D Euler equation. We construct unstable 3D shears
satisfying assumptions (A1)-(A3) in several steps.

The linearized 3D Euler equation around a 3D shear $\left(  U\left(
y,z\right)  ,0,0\right)  $ is
\begin{equation}
\partial_{t}u+Uu_{x}+vU_{y}+wU_{z}=-P_{x}, \label{eqn-u-3d-shear}%
\end{equation}%
\begin{equation}
\partial_{t}v+Uv_{x}=-P_{y},\ \partial_{t}w+Uw_{x}=-P_{z},
\label{eqn-v,w-3d-shear}%
\end{equation}%
\begin{equation}
u_{x}+v_{y}+w_{z}=0, \label{eqn-div-3d-shear}%
\end{equation}
with periodic boundary conditions. There are almost no results about the the
linear instability of general 3D shears. So we construct unstable 3D shears
near unstable 2D shear flows $\left(  U_{0}\left(  y\right)  ,0,0\right)  $
where $U_{0}\left(  y\right)  $ is periodic with period $L_{y}$. First, we
give a sufficient condition for linear instability of 2D periodic shears,
which generalizes the result in \cite{Lin03} for shear flows in a channel with
rigid walls.

\begin{lemma}
\label{L-2d-shear}Consider a periodic shear profile $U(y)\in C^{2}\left(
0,L_{y}\right)  $ with only one inflection value $U_{s}$ and
\begin{equation}
K(y)=-\frac{U^{\prime\prime}(y)}{U(y)-U_{s}}>0. \label{Po}%
\end{equation}
Let $-\alpha_{\max}^{2}$ be the lowest eigenvalue of the Sturm-Liouville
operator
\begin{equation}
L\varphi=-\varphi^{\prime\prime}-K(y)\varphi\label{SLO}%
\end{equation}
with the periodic boundary conditions on $y\in\left[  0,L_{y}\right]  $. Then
the Rayleigh equation
\begin{equation}
U^{\prime\prime}\phi-(U-c)\left(  \phi^{\prime\prime}-\alpha^{2}\phi\right)
=0, \label{Rayleigh}%
\end{equation}
with periodic boundary conditions on $y\in\left[  0,L_{y}\right]  $ has
unstable eigenmodes ($\operatorname{Im}c>0$) for any $\alpha\in\left(
0,\alpha_{\max}\right)  $.
\end{lemma}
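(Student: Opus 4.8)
The plan is to adapt the method of \cite{Lin03}, which treats rigid--wall (Dirichlet) shear flows, to the present periodic setting. Two ingredients are needed: (i) a \emph{neutral limiting mode} of \eqref{Rayleigh} located at $\alpha=\alpha_{\max}$, $c=U_{s}$; and (ii) a count, by an argument--principle argument for a compact operator family, showing that for $\alpha$ just below $\alpha_{\max}$ --- and then, by the same monotonicity, for every $\alpha\in(0,\alpha_{\max})$ --- this neutral mode has been pushed into the open upper half $c$--plane. Passing from Dirichlet to periodic boundary conditions changes nothing structural: the one--dimensional operators below stay self--adjoint, the Perron--Frobenius/Sturm--Liouville theory still produces a \emph{simple} principal eigenvalue with a strictly positive eigenfunction, and Howard's semicircle theorem (every unstable $c$ lies in the closed disk with diameter $[\min U,\max U]$) is unaffected.

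First I would record the neutral mode. The assumption that $K=-U''/(U-U_{s})$ is a positive \emph{continuous} function is exactly the Tollmien regularity condition: wherever $U(y)=U_{s}$ one has $U''(y)=0$, so the coefficient $U''/(U-c)$ in \eqref{Rayleigh} extends continuously to $c=U_{s}$, and division of \eqref{Rayleigh} by $U-U_{s}$ shows that $c=U_{s}$ solves \eqref{Rayleigh} precisely when $\phi$ solves $L\phi=-\alpha^{2}\phi$ with periodic boundary conditions. Taking $\alpha=\alpha_{\max}$ and $\phi=\phi_{0}>0$, the principal eigenfunction of $L$, gives the neutral limiting mode to be continued; in particular $\langle(-\partial_{yy}-K+\alpha^{2})\phi_{0},\phi_{0}\rangle<0$ exactly when $\alpha<\alpha_{\max}$.

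The core of the proof is showing this mode moves into $\{\operatorname{Im}c>0\}$. For $c\in\mathbb{C}^{+}$ rewrite \eqref{Rayleigh} as $\phi=\mathcal{A}(c)\phi$ with $\mathcal{A}(c)=(\alpha^{2}-\partial_{yy})^{-1}\!\bigl(K\,\tfrac{U-U_{s}}{U-c}\,\cdot\,\bigr)$, a trace--class operator depending analytically on $c$; then $c$ is an unstable eigenvalue of \eqref{Rayleigh} precisely when $\det\bigl(I-\mathcal{A}(c)\bigr)=0$ with $\operatorname{Im}c>0$, and the number of such $c$ equals the winding number of $c\mapsto\det\bigl(I-\mathcal{A}(c)\bigr)$ about $0$ over a contour in $\mathbb{C}^{+}$ enclosing the continuous--spectrum interval $[\min U,\max U]$. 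At $c=U_{s}$ the operator $\mathcal{A}(U_{s})=(\alpha^{2}-\partial_{yy})^{-1}(K\,\cdot\,)$ is conjugate to the self--adjoint $K^{1/2}(\alpha^{2}-\partial_{yy})^{-1}K^{1/2}$, whose largest eigenvalue exceeds $1$ iff $-\partial_{yy}-K+\alpha^{2}$ has a negative direction, i.e. (by the previous paragraph) iff $\alpha<\alpha_{\max}$. As $\operatorname{Re}c$ moves off $U_{s}$ along the real axis, the perturbation of this eigenvalue picks up an imaginary part governed, through the Plemelj formula, by the residues $-U''/|U'|$ at the critical layers $\{U=\operatorname{Re}c\}$; the hypotheses (single inflection value, $K>0$) force $-U'''/U'=K>0$ at each inflection point and hence these residues to have sign $\operatorname{sgn}(\operatorname{Re}c-U_{s})$, so the eigenvalue curve crosses the real axis transversally at a value exceeding $1$ and the winding number about $1$ is at least one. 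Thus \eqref{Rayleigh} has an unstable mode for every $\alpha\in(0,\alpha_{\max})$.

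The principal obstacle is carrying this winding count out rigorously: one must establish the analyticity and trace--class estimates for $\mathcal{A}(c)$, justify the boundary limits as $\operatorname{Im}c\to0^{+}$, control the contributions of the small arcs around the endpoints $\min U$ and $\max U$ where the continuous spectrum terminates and $\mathcal{A}(c)$ degenerates, and --- the genuinely delicate point --- pin down the sign of the Plemelj residue at the critical layers, which is exactly where the single--inflection--value hypothesis and $K>0$ are used. Once these sign facts are secured, the rest follows the blueprint of \cite{Lin03}.
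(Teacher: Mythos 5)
Your proof goes down a genuinely different road from the paper's. The paper follows \cite{Lin03} literally: it treats the Rayleigh problem as a shooting/monodromy problem, defining fundamental solutions $\phi_{1},\phi_{2}$ of the ODE with initial data at a chosen base point $y_{1}$, introduces the trace functional $I(c,\varepsilon)=\phi_{1}(y_{2})+\phi_{2}'(y_{2})-2$ (which vanishes exactly when the monodromy has $1$ as an eigenvalue, i.e.\ a periodic solution exists), and then applies a variant of the implicit function theorem at the neutral point $(c,\varepsilon)=(U_{s},0)$. The key computation is that $\partial I/\partial\varepsilon$ is real and nonzero while $\partial I/\partial c$ has nonzero imaginary part coming from the Plemelj/critical-layer contribution $i\pi\sum(|U'|^{-1}K\phi_{s}^{2})|_{a_{k}}$; this forces the bifurcating root $c(\varepsilon)$ into $\{\operatorname{Im}c>0\}$ for $\varepsilon<0$. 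The result for small $\alpha_{\max}^{2}-\alpha^{2}$ is then extended to all $\alpha\in(0,\alpha_{\max})$ by the continuation argument of \cite{Lin03} (unstable modes stay in a compact part of $\mathbb{C}^{+}$ by Howard's semicircle and can only hit the real axis at a neutral mode, which requires $-\alpha^{2}\in\sigma(L)$; since $-\alpha_{\max}^{2}$ is the lowest eigenvalue, this never happens for $\alpha<\alpha_{\max}$). You instead recast the problem as a fixed point equation for the compact operator $\mathcal{A}(c)=(\alpha^{2}-\partial_{yy})^{-1}(K\,\tfrac{U-U_{s}}{U-c}\cdot)$ and propose to count unstable $c$ by the winding number of $\det(I-\mathcal{A}(c))$. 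The two approaches share the same skeleton — the neutral mode at $(\alpha_{\max},U_{s},\phi_{s})$ from Sturm--Liouville theory, and the Plemelj residues at the critical layers with sign controlled by the one-inflection-value hypothesis and $K>0$ — but your version is a global spectral count while the paper's is local bifurcation plus continuation. What the operator-theoretic route buys is a cleaner conceptual picture and, in principle, a \emph{count} of unstable modes; what it costs is that the degenerate endpoint behavior must be confronted directly rather than sidestepped.

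The gap in your sketch is exactly the part you flag as the ``principal obstacle,'' and it is not a small technicality: the sign of the Plemelj residue at $c=U_{s}$ only tells you how $\det(I-\mathcal{A}(c))$ behaves \emph{locally} as $c$ crosses $U_{s}$ on the real axis. To conclude that the winding number over the full boundary contour of a neighborhood of $[\min U,\max U]$ in $\mathbb{C}^{+}$ is at least one, you also need to rule out cancelling contributions elsewhere along the real axis and, more delicately, from the small arcs at $c=\min U$ and $c=\max U$, where $1/(U-c)$ has non-integrable singularities at the extremal critical layers and $\mathcal{A}(c)$ does not extend to a bounded operator. This endpoint analysis is exactly what makes direct winding-number arguments for the Rayleigh equation hard, and it is precisely the work that the paper's local-IFT-plus-continuation scheme is designed to avoid: there, one never needs boundary values of $\mathcal{A}(c)$ away from $c=U_{s}$, because persistence follows from the a priori (semicircle) bound together with the fact that a re-entry to the real line can only occur at a neutral eigenvalue of $L$, which the choice of $\alpha_{\max}$ as the lowest Sturm--Liouville eigenvalue prohibits. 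Your closing remark that ``the rest follows the blueprint of \cite{Lin03}'' is therefore misleading: \cite{Lin03} does not supply the endpoint estimates your winding argument requires. If you want to keep the Fredholm/winding framework, you should either establish the boundary regularity of $\det(I-\mathcal{A}(c))$ up to indented endpoints (nontrivial), or graft in the continuation argument from the paper once you have the local bifurcation — at which point you have essentially reproduced the paper's structure in operator language.
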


\begin{remark}
Under the assumptions in the above Lemma, the lowest eigenvalue of $L$ is
negative, since $\left(  L\left(  1\right)  ,1\right)  =-\int K(y)dy<0$. A
typical example satisfying (\ref{Po}) is $U_{0}\left(  y\right)  =\sin\left(
\frac{2\pi}{L_{y}}y\right)  $ for which $K\left(  y\right)  =\left(
\frac{2\pi}{L_{y}}\right)  ^{2}$.
\end{remark}

\begin{proof}
The proof is similar to the case of rigid walls (\cite{Lin03}), so we only
point out some small modifications. Let $\phi_{s}$ be the eigenfunction of
$L\ $corresponding to the lowest eigenvalue $-\alpha_{\max}^{2}$. Then
$\left(  \phi,c,\alpha\right)  =\left(  \phi_{s},U_{s},\alpha_{\max}\right)  $
is a neutral solution to the Rayleigh equation (\ref{Rayleigh}). By
Sturm-Liouville theory, $-\alpha_{\max}^{2}$ is a simple eigenvalue and we can
take $\phi_{s}>0$. First, we study bifurcation of unstable modes near the
neutral mode. Denote $y_{1}$ to be a minimum point of $\phi_{s}$ and let
$y_{2}=y_{1}+L_{y}$. We normalize $\phi_{s}$ such that $\phi_{s}\left(
y_{1}\right)  =1,\ \phi_{s}^{\prime}\left(  y_{1}\right)  =0$. Define
$\phi_{1}\left(  y;c,\varepsilon\right)  $ and $\phi_{2}\left(
y;c,\varepsilon\right)  $ to be the solutions of
\begin{equation}
-\phi^{\prime\prime}+\frac{U^{\prime\prime}}{U-U_{s}-c}\phi+\left(
\alpha_{\max}^{2}+\varepsilon\right)  \phi=0,\text{ } \label{homo-ode}%
\end{equation}
with $\phi_{1}\left(  y_{1}\right)  =1,$ $\phi_{1}^{\prime}\left(
y_{1}\right)  =0$ and $\phi_{2}\left(  y_{1}\right)  =0,\phi_{2}^{\prime
}\left(  y_{1}\right)  =1.$ Here $\varepsilon<0$ and $\operatorname{Im}c>0.$
Define
\[
\ I\left(  c,\varepsilon\right)  =\phi_{1}\left(  y_{2};c,\varepsilon\right)
+\phi_{2}^{\prime}\left(  y_{2};c,\varepsilon\right)  -2,
\]
then the existence of a solution to the Rayleigh equation (\ref{homo-ode})
with periodic boundary conditions on $y\in\left[  y_{1},y_{2}\right]  \ $is
equivalent to the existence of a root of $I$ with $\operatorname{Im}c>0.$ When
$c\rightarrow0,$ $\varepsilon\rightarrow0-$ and $\left\vert \operatorname{Re}%
c\right\vert /\operatorname{Im}c$ remains bounded, as in \cite{Lin03} we can
show that $\phi_{1}\left(  y;c,\varepsilon\right)  $ $(\phi_{2}\left(
y;c,\varepsilon\right)  )\ $converges to $\phi_{s}\left(  y\right)  $
$(\phi_{z}\left(  y\right)  )\ $uniformly in $C^{1}[y_{1},y_{2}]$. Here,
$\phi_{z}\left(  y\right)  \in C^{1}\left[  y_{1},y_{2}\right]  $ satisfies
that $\phi_{z}^{\prime}\left(  y_{2}\right)  =1$ and $\phi_{z}\left(
y_{2}\right)  \neq0$ since $\phi_{z}\left(  y\right)  $ can not be another
eigenfunction associated with the simple eigenvalue $-\alpha_{\max}^{2}$. By
similar calculations as in \cite{Lin03}, it can be shown that when
$c\rightarrow0,$ $\varepsilon\rightarrow0-$ and $\left\vert \operatorname{Re}%
c\right\vert /\operatorname{Im}c$ remains bounded,%
\[
\frac{\partial I}{\partial\varepsilon}\rightarrow\phi_{z}\left(  y_{2}\right)
\int_{y_{1}}^{y_{2}}\phi_{s}^{2}\left(  y\right)  dy,
\]
and
\[
\frac{\partial I}{\partial c}\rightarrow-\phi_{z}\left(  y_{2}\right)  \left(
i\pi\sum_{k=1}^{l}\left(  \left\vert U^{\prime}\right\vert ^{-1}K\phi_{s}%
^{2}\right)  |_{y=a_{k}}+\mathcal{P}\int_{y_{1}}^{y_{2}}\left(  K\left(
y\right)  \phi_{s}^{2}\left(  y\right)  \right)  /\left(  U\left(  y\right)
-U_{s}\right)  dy\right)  \text{. }%
\]
Here, $a_{1},\cdots,a_{l}$ are the inflection points such that $U\left(
a_{k}\right)  =U_{s}$, $k=1,\cdots,l$ and $\mathcal{P}\int_{y_{1}}^{y_{2}}$
denotes the Cauchy principal part. Then by a variant of implicit function
theorem as in \cite{Lin03}, there exists $\varepsilon_{0}<0$ such that for any
$\varepsilon_{0}<\varepsilon<0$ , there is an unstable solution $\phi
_{\varepsilon}$ with $c=c\left(  \varepsilon\right)  \ $to Rayleigh's equation
(\ref{homo-ode}). By the same arguments in \cite{Lin03}, such unstable modes
can be continuated to all wave numbers $\alpha\in\left(  0,\alpha_{\max
}\right)  $.
\end{proof}

Our second step is to show that 3D shears near an unstable 2D shear are also
linearly unstable. More precisely, we have

\begin{lemma}
\label{L-3d-shear}Let $U_{0}\left(  y\right)  \in C^{2}\left(  0,L_{y}\right)
$ be such that the Rayleigh equation (\ref{Rayleigh}) has an unstable solution
with $\left(  \alpha_{0},c_{0}\right)  $ $\left(  \alpha_{0},\operatorname{Im}%
c_{0}>0\right)  $. Fixed $L_{z}>0$, consider $U\left(  y,z\right)  \in
C^{1}\left(  \left(  0,L_{y}\right)  \times\left(  0,L_{z}\right)  \right)  $
which is $L_{y},\ L_{z}$-periodic in $y$ and $z$ respectively. If $\left\Vert
U\left(  y,z\right)  -U_{0}\left(  y\right)  \right\Vert _{W^{1,p}\left(
\left(  0,L_{y}\right)  \times\left(  0,L_{z}\right)  \right)  }$ $\left(
p>2\right)  \ $is small enough, then there exists an unstable solution
$e^{i\alpha_{0}\left(  x-ct\right)  }\left(  u,v,w,P\right)  \left(
y,z\right)  $ to the linearized equation (\ref{eqn-u-3d-shear}%
)-(\ref{eqn-div-3d-shear}) with $\left\vert c-c_{0}\right\vert $ small.
Moreover, if $U\left(  y,z\right)  \in C^{\infty}$, then $\left(
u,v,w,P\right)  \in C^{\infty}$.
\end{lemma}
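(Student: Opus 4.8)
The plan is to reduce the $3$D eigenvalue problem, under the ansatz $e^{i\alpha_{0}(x-ct)}$, to a single second order elliptic equation for the pressure on the cross section $\Sigma=(0,L_{y})\times(0,L_{z})$ with periodic boundary conditions, and then to obtain the unstable mode of $U(y,z)$ by perturbing the given $2$D mode of $U_{0}(y)$ via the implicit function theorem. Substituting $(u,v,w,P)=e^{i\alpha_{0}(x-ct)}(u,v,w,P)(y,z)$ into \eqref{eqn-u-3d-shear}--\eqref{eqn-div-3d-shear}, and using that $\operatorname{Im}c\neq0$ keeps $U-c$ bounded away from $0$, the transverse momentum equations give $(v,w)=\frac{i}{\alpha_{0}(U-c)}\nabla_{\perp}P$ with $\nabla_{\perp}=(\partial_{y},\partial_{z})$, incompressibility gives $u=\frac{i}{\alpha_{0}}(v_{y}+w_{z})$, and inserting these into the $x$-momentum equation collapses the system to
\[
\mathcal{T}_{U,c}P\triangleq\Delta_{\perp}P-\alpha_{0}^{2}P-\frac{2\,\nabla_{\perp}U\cdot\nabla_{\perp}P}{U-c}=0 ,\qquad \Delta_{\perp}=\partial_{y}^{2}+\partial_{z}^{2}.
\]
Conversely, a nonzero periodic solution $P$ of this equation is automatically nonconstant (since $\alpha_{0}\neq0$), and the fields $u,v,w$ recovered above form a genuine nonzero solution of \eqref{eqn-u-3d-shear}--\eqref{eqn-div-3d-shear}; so the whole problem is equivalent to the scalar equation $\mathcal{T}_{U,c}P=0$ on $\Sigma$.

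Since $p>2$, Morrey's inequality gives $W^{1,p}(\Sigma)\hookrightarrow C^{0}(\Sigma)$, so for $c$ in a fixed strip with $|\operatorname{Im}c|$ bounded below the coefficient $\frac{2\nabla_{\perp}U}{U-c}$ lies in $L^{p}(\Sigma)$ and depends smoothly on $(U,c)$; hence $\mathcal{T}_{U,c}\colon W^{2,p}(\Sigma)\to L^{p}(\Sigma)$ is a relatively compact perturbation of the invertible operator $\Delta_{\perp}-\alpha_{0}^{2}$, therefore Fredholm of index $0$, analytic in $c$ and smooth in $U$. For $U=U_{0}(y)$ the hypothesised unstable Rayleigh solution $(\alpha_{0},c_{0},\phi_{0})$, $\operatorname{Im}c_{0}>0$, produces the $z$-independent element $P_{0}(y)=(U_{0}-c_{0})\phi_{0}'-U_{0}'\phi_{0}\in\ker\mathcal{T}_{U_{0},c_{0}}$, as one checks directly from \eqref{Rayleigh}.

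I would then fix a bounded functional $\ell$ with $\ell(P_{0})=1$ and consider $F(U,c,P)=\big(\mathcal{T}_{U,c}P,\,\ell(P)-1\big)$, which is smooth in $(c,P)$, smooth in $U\in W^{1,p}$, and vanishes at $(U_{0},c_{0},P_{0})$. By the Fredholm alternative, $D_{(c,P)}F(U_{0},c_{0},P_{0})$ is an isomorphism exactly when the $2$D mode is non-degenerate in the usual two senses: (i) $\ker\mathcal{T}_{U_{0},c_{0}}$ is one dimensional, spanned by $P_{0}$; and (ii) the transversality $\langle\partial_{c}\mathcal{T}_{U_{0},c_{0}}P_{0},\psi_{0}\rangle\neq0$, where $\psi_{0}$ spans the one dimensional cokernel and $\partial_{c}\mathcal{T}_{U_{0},c_{0}}P_{0}=-2U_{0}'P_{0}'/(U_{0}-c_{0})^{2}$. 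Property (ii) is precisely the non-degeneracy ($\partial_{c}I\neq0$) that drives the implicit-function step in the proof of Lemma \ref{L-2d-shear} and in \cite{Lin03}, so I would take $(\alpha_{0},c_{0})$ to be one of the simple unstable modes furnished there. For (i) one must additionally rule out that some cross-sectional Fourier mode $e^{2\pi i m z/L_{z}}$ with $m\neq0$ lies in $\ker\mathcal{T}_{U_{0},c_{0}}$; since such a mode would force $c_{0}$ to be a Rayleigh eigenvalue of $U_{0}$ at the strictly larger wavenumber $\sqrt{\alpha_{0}^{2}+(2\pi m/L_{z})^{2}}$, this is excluded for a suitable choice of $\alpha_{0}\in(0,\alpha_{\max})$ (or, to avoid any genericity, by a Lyapunov--Schmidt reduction on the finite dimensional kernel, the extra directions being removable because the associated $1$D Rayleigh operators are non-degenerate).

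Granting the isomorphism, the implicit function theorem produces, for $\|U-U_{0}\|_{W^{1,p}(\Sigma)}$ small, a unique $(c(U),P(U))$ near $(c_{0},P_{0})$ with $\mathcal{T}_{U,c(U)}P(U)=0$, depending continuously on $U$; in particular $c(U)\to c_{0}$, so $\operatorname{Im}c(U)>0$ and $e^{i\alpha_{0}(x-c(U)t)}(u,v,w,P)$ is a genuine unstable mode with $|c(U)-c_{0}|$ small. If moreover $U\in C^{\infty}$, then $\Delta_{\perp}P=\alpha_{0}^{2}P+2\nabla_{\perp}U\cdot\nabla_{\perp}P/(U-c)$ is elliptic with smooth coefficients, and a routine bootstrap gives $P\in C^{\infty}$, whence $(u,v,w,P)\in C^{\infty}$. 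I expect the only real difficulty to be the non-degeneracy input of the third paragraph — keeping the cross-sectional operator $\mathcal{T}_{U_{0},c_{0}}$ free of spurious kernel coming from the new $z$-direction — since the elliptic reduction and the implicit-function argument built on it are otherwise standard.
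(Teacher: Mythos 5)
Your reduction to the scalar second-order elliptic equation for the pressure $P$ on the cross-section $(0,L_{y})\times(0,L_{z})$, followed by an implicit-function perturbation of the $z$-independent mode $(c_{0},P_{0}(y))$ of $U_{0}$, is precisely the argument the paper defers to \cite{lin-li}, and the paper gives no detail beyond that reference. The one subtle point, which you correctly single out, is that $z$-independence of $U_{0}$ can enlarge the cross-sectional kernel through $z$-Fourier modes $e^{2\pi imz/L_{z}}$ with $m\neq 0$; your proposed remedy (only finitely many $m$ can interfere since the composite wavenumber $\sqrt{\alpha_{0}^{2}+(2\pi m/L_{z})^{2}}$ eventually leaves the range where the Rayleigh equation for $U_{0}$ is unstable, a generic $\alpha_{0}\in(0,\alpha_{\max})$ avoids the remaining resonances, and the residual cases succumb to a Lyapunov--Schmidt reduction using the non-degeneracy of the associated one-dimensional Rayleigh problems) is the correct way to close it.
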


The proof of above lemma is almost the same as in the case of rigid walls
(\cite{lin-li}), so we skip it here. By Lemmas \ref{L-2d-shear} and
\ref{L-3d-shear}, there exist linearly unstable 3D shears $\vec{u}_{0}=\left(
U\left(  y,z\right)  ,0,0\right)  $. Below, we show that the assumption (A2)
of linear exponential dichotomy is true in spaces $W_{Euler}^{m,2}=H^{m}$
($m\geq1$ is integer)$\ $for such unstable 3D shears. Then the assumption (A3)
is automatic since $\mu_{0}=0$. Let $G_{t}=e^{Lt}$ be the linearized Euler
semigroup near a steady flow $\vec{u}_{0}\left(  \vec{x}\right)  $ and denote
$r_{ess}\left(  G_{t};H^{m}\right)  $ to be the essential spectrum radius of
$G_{t}$ in space $H^{m}$. By rather standard semigroup theory (see e.g.
\cite[Section 1 ]{shizuta}), to get the linear exponential dichotomy (A2), it
suffices to show that $r_{ess}\left(  G_{t};H^{m}\right)  =1$. The essential
spectrum of linearized Euler operator had been studied a lot (\cite{fv-91}
\cite{lif-mai} \cite{latushkin-shvydkoy-jfa} \cite{vishik96}) by using the
geometric optics method. We use the following characterization of
$r_{ess}\left(  G_{t};H^{m}\right)  $ in \cite{latushkin-shvydkoy-jfa}.

\begin{lemma}
\cite{latushkin-shvydkoy-jfa}Consider the following ODE system%
\begin{equation}
\left\{
\begin{array}
[c]{c}%
\vec{x}_{t}=\vec{u}_{0}\left(  \vec{x}\right) \\
\vec{\xi}_{t}=-\partial\vec{u}_{0}\left(  \vec{x}\right)  ^{T}\ \vec{\xi}\\
\vec{b}_{t}=-\partial\vec{u}_{0}\left(  \vec{x}\right)  \vec{b}+2\left(
\partial\vec{u}_{0}\left(  \vec{x}\right)  \vec{b},\vec{\xi}\right)  \vec{\xi
}\left\vert \vec{\xi}\right\vert ^{-2},
\end{array}
\right.  \label{ODE-ess-euler}%
\end{equation}
where$\ \vec{u}_{0}\left(  \vec{x}\right)  $ is a steady flow of 3D Euler
equation in $T^{3}$ and $\vec{x}\in T^{3},\vec{\xi},\vec{b}\in\mathbf{R}^{3}
$. Denote
\begin{equation}
\Lambda_{m}=\lim_{t\rightarrow\infty}\frac{1}{t}\ln\sup_{\substack{\vec{x}%
_{0}\in T^{3},\ \left\vert \vec{\xi}_{0}\right\vert =1 \\\vec{b}%
_{0}{\huge \perp}\vec{\xi}_{0},\ \left\vert \vec{b}_{0}\right\vert
=1}}\left\vert \vec{b}\left(  t\right)  \right\vert \left\vert \vec{\xi
}\left(  t\right)  \right\vert ^{m} \label{defn-lambda-m}%
\end{equation}
where $\left(  \vec{x}\left(  t\right)  ,\vec{b}\left(  t\right)  ,\vec{\xi
}\left(  t\right)  \right)  $ is the solution of (\ref{ODE-ess-euler}) with
initial data $\left(  \vec{x}_{0},\vec{\xi}_{0},\vec{b}_{0}\right)  $. Then we
have%
\[
r_{ess}\left(  G_{t};H^{m}\right)  =e^{t\Lambda_{m}}.
\]

\end{lemma}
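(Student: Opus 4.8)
The plan is to establish the two inequalities $r_{ess}(G_t;H^m)\ge e^{t\Lambda_m}$ and $r_{ess}(G_t;H^m)\le e^{t\Lambda_m}$ separately, through a geometric optics (WKB) analysis of the linearized Euler semigroup $G_t=e^{tL}$, with $L$ as in \eqref{E:LEuler}. A preliminary observation is that, since $\vec{u}_0$ is smooth on the compact manifold $T^3$, the bicharacteristic system \eqref{ODE-ess-euler} is globally well posed; moreover the constraint $\vec b\perp\vec\xi$ and the normalization $|\vec b|=|\vec\xi|=1$ are preserved under the flow (the $\vec b$-equation being linear in $\vec b$ and homogeneous of degree zero in $\vec\xi$), so the quantity $\sup\big(|\vec b(t)|\,|\vec\xi(t)|^m\big)$ is a submultiplicative cocycle over $\dot{\vec x}=\vec u_0(\vec x)$. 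Hence $t\mapsto\ln\sup\big(|\vec b(t)|\,|\vec\xi(t)|^m\big)$ is subadditive and the limit in \eqref{defn-lambda-m} exists and equals the infimum over $t$.

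\emph{Lower bound.} I would construct a Weyl-type sequence for $G_{t_0}$ realizing the amplification factor $e^{t_0\Lambda_m}$. Fix $t_0>0$, pick $(\vec x_0,\vec\xi_0,\vec b_0)$ almost optimal in \eqref{defn-lambda-m}, and let $\big(\vec x(s),\vec\xi(s),\vec b(s)\big)$ be the corresponding solution of \eqref{ODE-ess-euler}. Take highly oscillatory initial data $v_\epsilon(0)=\chi_\epsilon(\vec x)\,\vec b_0\,e^{i\vec\xi_0\cdot\vec x/\epsilon}$, with $\chi_\epsilon$ a smooth cutoff localizing near $\vec x_0$ on a scale $\sqrt\epsilon$; the condition $\vec b_0\perp\vec\xi_0$ makes the leading symbol divergence free. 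Substituting the ansatz $v_\epsilon(s)\approx\vec b(s)\,a_\epsilon(s,\vec x)\,e^{iS_\epsilon(s,\vec x)/\epsilon}$ into the linearized equation \eqref{E:LEuler}, the eikonal part forces $\nabla S_\epsilon$ to be transported along $\dot{\vec x}=\vec u_0$ by $\dot{\vec\xi}=-(\partial\vec u_0)^T\vec\xi$, and, after applying the Leray projector to eliminate the pressure gradient to leading order, the amplitude transport equation forces $\vec b$ to solve exactly the third equation of \eqref{ODE-ess-euler}; the residual is $o(1)$ in $H^m$ after rescaling. Since differentiating $m$ times multiplies the principal part by $|\vec\xi_0/\epsilon|^m$, one obtains
\[
\frac{|G_{t_0}v_\epsilon(0)|_{H^m}}{|v_\epsilon(0)|_{H^m}}\longrightarrow |\vec b(t_0)|\,|\vec\xi(t_0)|^m\qquad(\epsilon\to0),
\]
and letting the base data be optimal and $\epsilon\to0$ produces a bounded, non-precompact sequence on which this ratio exceeds $e^{t_0\Lambda_m}-o(1)$. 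Hence $e^{t_0\Lambda_m}\le r_{ess}(G_{t_0};H^m)$.

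\emph{Upper bound.} Conjugating $G_t$ by $(1-\Delta)^{m/2}$ reduces the $H^m$ estimate to an $L^2$ estimate in which the symbol carries the extra weight $|\xi|^m$. After applying the Leray projector, $L$ is a transport operator plus a pseudodifferential perturbation that is smoothing off the characteristic variety, so $G_t$ is, microlocally at high frequency, a Fourier integral operator whose canonical relation is the lift of the bicharacteristic flow $\dot{\vec x}=\vec u_0,\ \dot{\vec\xi}=-(\partial\vec u_0)^T\vec\xi$ and whose principal symbol propagates precisely by the $\vec b$-equation of \eqref{ODE-ess-euler}, carrying the $|\xi|^m$ weight. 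A standard parametrix/energy estimate then yields, for every $\delta>0$, a constant $C_\delta$ with $\|G_t\|_{H^m\to H^m}\le C_\delta e^{t(\Lambda_m+\delta)}$ modulo an operator mapping $H^m$ into a fixed space $H^{m+1}$, which is compact on $H^m$ by Rellich; the nonlocal pressure term and all lower-order commutators are relatively compact. Therefore $r_{ess}(G_t;H^m)\le e^{t(\Lambda_m+\delta)}$ for every $\delta>0$, and combined with the lower bound this gives $r_{ess}(G_t;H^m)=e^{t\Lambda_m}$.

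\emph{Main obstacle.} The delicate part is the upper bound: one must verify that no mechanism other than bicharacteristic transport contributes to the essential spectrum, i.e.\ that the pressure nonlocality together with all lower-order terms are genuinely relatively compact on $H^m$, and that the FIO parametrix bounds hold with only subexponential-in-$t$, frequency-uniform losses. The non-normality of $L$ and the fact that we work in $H^m$ rather than $L^2$ — so the amplitude cocycle is effectively twisted by the weight $|\xi|^m$ — are the sources of the technical bookkeeping.
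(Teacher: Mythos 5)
The paper you are reading does not prove this lemma at all; it is quoted verbatim from the reference \cite{latushkin-shvydkoy-jfa} (Shvydkoy and Latushkin, J.\ Func.\ Anal.\ \textbf{257} (2009)) and used as a black box to verify assumption (A2) for the $3$D shear examples. So there is no in-paper proof to compare against, and the correct observation for this lemma is simply that it is imported from the literature. Your WKB/geometric-optics sketch is a reasonable heuristic reconstruction of \emph{why} the formula is true, and the lower-bound half (oscillatory Weyl sequences localized along a bicharacteristic, eikonal/transport hierarchy forcing $\vec\xi$ and $\vec b$ to solve the bicharacteristic-amplitude system, $|\vec\xi|^m$ weight from $m$ derivatives, orthogonality $\vec b_0\perp\vec\xi_0$ to keep the symbol divergence-free) follows closely the classical Vishik/Friedlander--Vishik argument. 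However, the cited paper does not actually prove the upper bound by an FIO parametrix estimate as you propose; it works in an operator-algebraic framework, embedding $G_t$ in a crossed-product algebra and reducing the computation of the Fredholm spectrum to the Lyapunov--Oseledets exponents of the finite-dimensional $(\vec\xi,\vec b)$ cocycle over the flow of $\vec u_0$. Your route is a genuinely different one and, as you note yourself, the delicate step would be verifying frequency-uniform, subexponential-in-$t$ remainder bounds and relative compactness of the nonlocal pressure and commutator terms on $H^m$; that is precisely where a full microlocal proof would require significant extra work and where the operator-algebra approach of the cited reference circumvents the issue by abstract Fredholm theory.
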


\begin{lemma}
For $\vec{u}_{0}=\left(  U\left(  y,z\right)  ,0,0\right)  $ in $T^{3}$, we
have $\Lambda_{m}=0$.
\end{lemma}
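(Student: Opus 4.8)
The plan is to exploit that a shear flow has completely integrable particle dynamics, so that the cocycle $\partial\vec{u}_0$ along any trajectory is \emph{nilpotent} and the system (\ref{ODE-ess-euler}) integrates essentially explicitly. First I would fix an integral curve $\vec{x}(t)$ of $\vec{x}_t=\vec{u}_0(\vec{x})$: since $\vec{u}_0=(U(y,z),0,0)$, the coordinates $y,z$ are frozen along $\vec{x}(t)$, so $A:=\partial\vec{u}_0(\vec{x}(t))$ is the constant matrix whose only nonzero entries are $A_{12}=U_y$, $A_{13}=U_z$ at the frozen $(y,z)$, and $A^2=0$. Hence the $\vec{\xi}$-equation integrates as $\vec{\xi}(t)=e^{-tA^{T}}\vec{\xi}_0=(I-tA^{T})\vec{\xi}_0$, so with $\vec{n}:=(U_y,U_z)$ and $|\vec{\xi}_0|=1$,
\[
|\vec{\xi}(t)|^2=(\xi_1^0)^2+\big|(\xi_2^0,\xi_3^0)-t\,\xi_1^0\vec{n}\big|^2,\qquad |\xi_1^0|\le|\vec{\xi}(t)|\le 1+m_0|t|,\quad m_0:=\sup_{T^3}|\nabla U|;
\]
in particular $|\vec{\xi}(t)|$ grows at most linearly. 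I would also record that $\tfrac{d}{dt}\big(\vec{b}(t)\cdot\vec{\xi}(t)\big)=0$, so $\vec{b}(t)\perp\vec{\xi}(t)$ for all $t$.

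Next I would analyze the $\vec{b}$-equation. Since $\mathrm{range}(A)=\mathbb{R}e_1$, one has $A\vec{b}=g\,e_1$ with the scalar $g:=U_y b_2+U_z b_3$, and $2(A\vec{b},\vec{\xi})\vec{\xi}\,|\vec{\xi}|^{-2}=2P_{\vec{\xi}}(A\vec{b})$, so (\ref{ODE-ess-euler}) reduces to $\dot{\vec{b}}=-g\,R(t)e_1$ where $R(t):=I-2P_{\vec{\xi}(t)}$ is an orthogonal reflection; thus $|\dot{\vec{b}}(t)|=|g(t)|$. Differentiating $g$ along the flow and using the formula above for $\tfrac{d}{dt}|\vec{\xi}(t)|^2$, I expect $\tfrac{d}{dt}\big(g(t)|\vec{\xi}(t)|^2\big)=0$, i.e.\ $g(t)=g_0/|\vec{\xi}(t)|^2$ with $g_0:=U_y b_2^0+U_z b_3^0$, $|g_0|\le m_0$. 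Integrating and using $|R(s)e_1|=1$,
\[
|\vec{b}(t)|\;\le\;1+\int_0^{t}|g(s)|\,ds\;=\;1+|g_0|\int_0^{t}\frac{ds}{|\vec{\xi}(s)|^2}.
\]

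The crux is then to show $\int_0^{t}|g(s)|\,ds\le C(1+|t|)$ \emph{uniformly} in $(\vec{x}_0,\vec{\xi}_0,\vec{b}_0)$, with $C=C(m_0)$. Granting this, $|\vec{b}(t)|\,|\vec{\xi}(t)|^m\le C(1+|t|)(1+m_0|t|)^m\le C(1+|t|)^{m+1}$ uniformly, whence $\Lambda_m=\lim_{t\to\infty}\tfrac1t\ln\sup(\cdots)\le\lim_{t\to\infty}\tfrac1t\ln\big(C(1+t)^{m+1}\big)=0$. For the opposite inequality: if $U$ is constant then $A\equiv0$ and $\Lambda_m=0$; otherwise I would pick a base point with $\vec{n}=\nabla U\neq0$ and take $\vec{\xi}_0$ with $\xi_1^0=0$ and $(\xi_2^0,\xi_3^0)$ \emph{not} parallel to $\vec{n}$, so $\vec{\xi}(t)\equiv\vec{\xi}_0$, $|\vec{\xi}(t)|=1$, $g(t)\equiv g_0$, and $\vec{b}(t)=\vec{b}_0-g_0 t\,e_1$; choosing $\vec{b}_0\perp\vec{\xi}_0$, $|\vec{b}_0|=1$ with $g_0\neq0$ (possible exactly by the non-parallel condition) gives $|\vec{b}(t)||\vec{\xi}(t)|^m\ge|g_0|\,|t|-1\to\infty$ linearly, so $\Lambda_m\ge0$. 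Hence $\Lambda_m=0$.

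The one genuinely technical step is the uniform bound, and I expect it to be the main obstacle; the difficulty is entirely in the regime $\xi_1^0\to0$, where the splitting in $(A2)$-type estimates degenerates. Writing $|\vec{\xi}(s)|^2=\delta^2+c^2(s-s^{*})^2$ with $c=|\xi_1^0|\,|\vec{n}|$ and $\delta^2=\min_s|\vec{\xi}(s)|^2\in[(\xi_1^0)^2,1]$ (the case $c=0$ being trivial, as then $|\vec{\xi}(s)|\equiv1$), an elementary $\arctan$ computation gives
\[
\int_0^{t}\frac{ds}{|\vec{\xi}(s)|^2}\;\le\;\min\!\Big(\frac{\pi}{c\,\delta},\;\frac{t}{\,\delta^2+c^2\,\mathrm{dist}(s^{*},[0,t])^2\,}\Big).
\]
The point, which must be handled carefully, is to keep $|g_0|$ and this integral \emph{together}: using the orthogonality $\vec{b}_0\perp\vec{\xi}_0$, which forces the $(\xi_2^0,\xi_3^0)$-component of $(b_2^0,b_3^0)$ to be $O(\xi_1^0)$, I would show $|g_0|\le C m_0\delta$ whenever $\delta\le\tfrac12$ (and $|g_0|\le m_0$ always), and then close the estimate by the trichotomy $\delta\ge\tfrac12$ / $\delta<\tfrac12$ with $s^{*}\notin[0,t]$ / $\delta<\tfrac12$ with $s^{*}\in[0,t]$ — in each case the displayed bound, together with the identity $c|s^{*}|=\sqrt{1-\delta^2}$, yields $\int_0^{t}|g(s)|\,ds\le Cm_0(1+t)$. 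Everything else in the argument is a direct consequence of the nilpotency of $\partial\vec{u}_0$.
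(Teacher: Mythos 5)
Your proposal is correct, and it is built on exactly the same key identity as the paper's proof: the conservation law $\frac{d}{dt}\bigl[(U_yb_2+U_zb_3)\,|\vec{\xi}(t)|^2\bigr]=0$, i.e.\ $g(t)=g_0/|\vec{\xi}(t)|^2$ in your notation. The difference is in what you do with it. The paper simply takes the sup over $|\vec{\xi}_0|=1$, computes $\min_{|\vec{\xi}_0|=1}|\vec{\xi}(t)|^2\ge(2+|\nabla U|^2 t^2)^{-1}$ by a Lagrange multiplier calculation, concludes $|g(t)|\le c_1t^2+c_2$, and then integrates twice more to get $|\vec{b}(t)|\le c_3t^3+c_4$. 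That crude cubic bound already gives $\Lambda_m=0$ because $\Lambda_m$ is a logarithmic rate and any polynomial bound collapses under $\lim_{t\to\infty}\tfrac1t\ln(\cdot)$. You instead invest considerably more effort to obtain the sharper linear bound $|\vec{b}(t)|\lesssim 1+t$: you rewrite the $\vec{b}$-equation as $\dot{\vec b}=-g\,R(t)e_1$ with $R$ a reflection so that $|\dot{\vec b}|=|g|$, you use the orthogonality $\vec{b}_0\perp\vec{\xi}_0$ (which the paper never needs) to show $|g_0|\lesssim m_0\delta$ when $\delta\le\tfrac12$, and you run a three-case analysis on the position of the minimizer $s^*$ combined with the $\arctan$-bound $\int_0^t|\vec{\xi}(s)|^{-2}ds\le\pi/(c\delta)$ and the identity $c|s^*|=\sqrt{1-\delta^2}$. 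I checked that this trichotomy does close, and also that your ancillary claims ($A^2=0$, $\dot{\vec b}=-gRe_1$, $\frac{d}{dt}(\vec{b}\cdot\vec{\xi})=0$, $|g_0|\le Cm_0\delta$ for $\delta\le\tfrac12$) are all correct. But since only a polynomial bound is needed, the sharper linear estimate is strictly optional, and the extra machinery it requires is the main place where you deviate from (and exceed the needs of) the paper's argument. You also prove the lower bound $\Lambda_m\ge0$ explicitly by choosing $\xi_1^0=0$ and an appropriate $\vec{b}_0$; the paper takes that for granted (e.g.\ any base point where $\nabla U=0$ gives constant $\vec{b},\vec{\xi}$, so the sup in the definition of $\Lambda_m$ is always $\ge1$), which is a harmless omission on both sides.
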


\begin{proof}
Denote%
\[
\vec{x}\left(  t\right)  =\left(  x\left(  t\right)  ,y\left(  t\right)
,z\left(  t\right)  \right)  ,\ \vec{\xi}\left(  t\right)  =\left(  \xi
_{1}\left(  t\right)  ,\xi_{2}\left(  t\right)  ,\xi_{3}\left(  t\right)
\right)  ,\ \vec{b}\left(  t\right)  =\left(  b_{1}\left(  t\right)
,b_{2}\left(  t\right)  ,b_{3}\left(  t\right)  \right)  ,
\]
and%
\[
\vec{x}_{0}=\left(  x_{0},y_{0},z_{0}\right)  ,\ \vec{\xi}_{0}=\left(  \xi
_{1}^{0},\xi_{2}^{0},\xi_{3}^{0}\right)  ,\ \vec{b}_{0}=\left(  b_{1}%
^{0},b_{2}^{0},b_{3}^{0}\right)  .
\]
The solution of first two equations of (\ref{ODE-ess-euler}) yield
\[
x\left(  t\right)  =x_{0}+U\left(  y_{0},z_{0}\right)  t,\ y\left(  t\right)
=y_{0},\ z\left(  t\right)  =z_{0}%
\]
and
\[
\xi_{1}\left(  t\right)  =\xi_{1}^{0},\ \ \xi_{2}\left(  t\right)  =-U_{y}%
\xi_{1}^{0}t+\xi_{2}^{0},\ \xi_{3}\left(  t\right)  =-U_{z}\xi_{1}^{0}%
t+\xi_{3}^{0}.
\]
Plugging above forms into the equation of $\vec{b}\left(  t\right)  $, we
have
\begin{equation}
\dot{b}_{1}=-\left(  U_{y}b_{2}+U_{z}b_{3}\right)  +\frac{2\left(  \xi_{1}%
^{0}\right)  ^{2}\left(  U_{y}b_{2}+U_{z}b_{3}\right)  }{\left\vert \vec{\xi
}\left(  t\right)  \right\vert ^{2}}, \label{eqn-b-1}%
\end{equation}%
\begin{equation}
\dot{b}_{2}=\frac{2\xi_{1}^{0}\left(  U_{y}b_{2}+U_{z}b_{3}\right)  \left(
\xi_{2}^{0}-U_{y}\xi_{1}^{0}t\right)  }{\left\vert \vec{\xi}\left(  t\right)
\right\vert ^{2}}, \label{eqn-b-2}%
\end{equation}%
\begin{equation}
\dot{b}_{3}=\frac{2\xi_{1}^{0}\left(  U_{y}b_{2}+U_{z}b_{3}\right)  \left(
\xi_{3}^{0}-U_{z}\xi_{1}^{0}t\right)  }{\left\vert \vec{\xi}\left(  t\right)
\right\vert ^{2}}. \label{eqn-b-3}%
\end{equation}
To show that $\Lambda_{m}=0$, it suffices to prove that $\left\vert \vec
{b}\left(  t\right)  \right\vert $ only has polynomial growth, uniformly in
$\left(  \vec{x}_{0},\vec{\xi}_{0},\vec{b}_{0}\right)  $. From equations
(\ref{eqn-b-2}) and (\ref{eqn-b-3}), we have
\begin{align*}
&  \frac{d}{dt}\left(  U_{y}b_{2}+U_{z}b_{3}\right) \\
&  =\frac{2\left(  U_{y}b_{2}+U_{z}b_{3}\right)  \left[  \xi_{1}^{0}\xi
_{2}^{0}U_{y}+\xi_{1}^{0}\xi_{3}^{0}U_{z}-\left(  \xi_{1}^{0}\right)
^{2}\left(  U_{y}^{2}+U_{z}^{2}\right)  t\right]  }{\left\vert \vec{\xi
}\left(  t\right)  \right\vert ^{2}}\\
&  =-\frac{\left(  U_{y}b_{2}+U_{z}b_{3}\right)  }{\left\vert \vec{\xi}\left(
t\right)  \right\vert ^{2}}\frac{d}{dt}\left\vert \vec{\xi}\left(  t\right)
\right\vert ^{2},
\end{align*}
by noting that
\begin{align*}
\left\vert \vec{\xi}\left(  t\right)  \right\vert ^{2}  &  =\left(  \xi
_{1}^{0}\right)  ^{2}+\left(  -U_{y}\xi_{1}^{0}t+\xi_{2}^{0}\right)
^{2}+\left(  -U_{z}\xi_{1}^{0}t+\xi_{3}^{0}\right)  ^{2}\\
&  =1-2\left(  \xi_{1}^{0}\xi_{2}^{0}U_{y}+\xi_{1}^{0}\xi_{3}^{0}U_{z}\right)
t+\left(  \xi_{1}^{0}\right)  ^{2}\left(  U_{y}^{2}+U_{z}^{2}\right)  t^{2}.
\end{align*}
Thus
\[
\frac{d}{dt}\left[  \left(  U_{y}b_{2}+U_{z}b_{3}\right)  \left\vert \vec{\xi
}\left(  t\right)  \right\vert ^{2}\right]  =0
\]
and
\begin{equation}
\left(  U_{y}b_{2}+U_{z}b_{3}\right)  \left(  t\right)  =\frac{U_{y}b_{2}%
^{0}+U_{z}b_{3}^{0}}{\left\vert \vec{\xi}\left(  t\right)  \right\vert ^{2}}.
\label{identity-combination}%
\end{equation}
For any fixed $t>0$, we find a lower bound for $\left\vert \vec{\xi}\left(
t\right)  \right\vert ^{2}$ by minimizing the function
\[
f\left(  \xi_{1}^{0},\xi_{2}^{0},\xi_{3}^{0}\right)  =\left(  \xi_{1}%
^{0}\right)  ^{2}+\left(  -U_{y}\xi_{1}^{0}t+\xi_{2}^{0}\right)  ^{2}+\left(
-U_{z}\xi_{1}^{0}t+\xi_{3}^{0}\right)  ^{2}%
\]
subject to the constraint $\left(  \xi_{1}^{0}\right)  ^{2}+\left(  \xi
_{2}^{0}\right)  ^{2}+\left(  \xi_{3}^{0}\right)  ^{2}=1$. By calculations of
Lagrange multiplier, we get
\begin{align*}
\min_{\left\vert \vec{\xi}_{0}\right\vert =1}\left\vert \vec{\xi}\left(
t\right)  \right\vert ^{2}  &  =\frac{2+\left(  U_{y}^{2}+U_{z}^{2}\right)
t^{2}-\sqrt{\left(  2+\left(  U_{y}^{2}+U_{z}^{2}\right)  t^{2}\right)
^{2}-4}}{2}\\
&  =\frac{2}{2+\left(  U_{y}^{2}+U_{z}^{2}\right)  t^{2}+\sqrt{\left(
2+\left(  U_{y}^{2}+U_{z}^{2}\right)  t^{2}\right)  ^{2}-4}}%
\end{align*}
So for $t>0$, we get the estimate
\[
\left\vert \vec{\xi}\left(  t\right)  \right\vert ^{2}\geq\frac{1}{2+\left(
U_{y}^{2}+U_{z}^{2}\right)  t^{2}}.
\]
Thus by (\ref{identity-combination}),
\[
\left\vert \left(  U_{y}b_{2}+U_{z}b_{3}\right)  \left(  t\right)  \right\vert
\leq c_{1}t^{2}+c_{2},
\]
for $c_{1},c_{2}>0$ independent of $\left(  \vec{x}_{0},\vec{\xi}_{0},\vec
{b}_{0}\right)  $. By (\ref{eqn-b-1})-(\ref{eqn-b-3}), we have
\[
\left\vert \dot{b}_{1}\right\vert \leq2\left\vert \left(  U_{y}b_{2}%
+U_{z}b_{3}\right)  \left(  t\right)  \right\vert ,\ \ \left\vert \dot{b}%
_{2}\right\vert ,\left\vert \dot{b}_{3}\right\vert \leq\left\vert \left(
U_{y}b_{2}+U_{z}b_{3}\right)  \left(  t\right)  \right\vert ,
\]
and thus
\[
\left\vert \vec{b}\left(  t\right)  \right\vert \leq c_{3}t^{3}+c_{4}%
\]
for some constants $c_{3},c_{4}>0$. This finishes the proof of the lemma.
\end{proof}

\bigskip

\end{document}